\numberwithin{equation}{section} 
\numberwithin{figure}{section} 
  \theoremstyle{plain}
  \newtheorem{thm}{Theorem}[section]
  \theoremstyle{plain}
  \newtheorem{conjecture}[thm]{Conjecture}
  \theoremstyle{definition}
  \newtheorem{defn}[thm]{Definition}
  \theoremstyle{remark}
  \newtheorem{rem}[thm]{Remark}
  \theoremstyle{plain}
  \newtheorem{prop}[thm]{Proposition}
  \theoremstyle{plain}
  \newtheorem{lem}[thm]{Lemma}
  \theoremstyle{plain}
  \newtheorem{cor}[thm]{Corollary}
  \theoremstyle{remark}
  \newtheorem{note}[thm]{Note}
\subjclass[2000]{Primary 47A20; Secondary 47A25, 47A48, 46E22, 30C20, 30H05, 30F10, 30E05}
\begin{document}

\title[Rational Dilation on Symmetric Domains]{Counterexamples to Rational Dilation on Symmetric Multiply Connected
Domains}

\author{James Pickering}

\address{Department of Mathematics\\
 University of Newcastle-upon-Tyne\\
 Newcastle-upon-Tyne\\
 United Kingdom\\
 NE1 7RU}

\email{james.pickering@ncl.ac.uk}

\thanks{This paper is based on work contributing towards the author's PhD
thesis at the University of Newcastle-upon-Tyne, under the supervision
of Michael Dritschel. This work is funded by the Engineering and Physical
Sciences Research Council.}

\keywords{Rational Dilation, Hyperelliptic Riemann Surfaces, Nevanlinna-Pick
Interpolation}

\date{Submitted on \today}

\begin{abstract}
We show that if $R$ is a compact domain in the complex plane with
two or more holes and an anticonformal involution onto itself (or
equivalently a hyperelliptic Schottky double), then there is an operator
$T$ which has $R$ as a spectral set, but does not dilate to a normal
operator with spectrum on the boundary of $R$. 
\end{abstract}
\maketitle

\subsection{Definitions}

Let $X$ be a compact, path connected subset of $\mathbb{C}$, with
interior $R$, and \emph{analytic boundary} $B$ composed of $n+1$
disjoint curves, $B_{0},\,\ldots,\, B_{n}$, where $n\geq2$. By analytic
boundary, we mean that for each boundary curve $B_{i}$ there is some
biholomorphic map $\phi_{i}$ on a neighbourhood $U_{i}$ of $X$
which maps $B_{i}$ to the unit circle $\mathbb{T}$. By convention
$B_{0}$ is the outer boundary. We write $\Pi=B_{0}\times\cdots\times B_{n}$.

We say a Riemann surface $Y$ is \emph{hyperelliptic} if there is
a meromorphic function with two poles on $Y$ (see \cite{FarkasKra}).
We say $R$ is symmetric if there exists some anticonformal involution
$\varpi$ on $R$ with $2n+2$ fixed points on $B$. We say a domain
in $\mathbb{C}\cup\{\infty\}$ (that is, the Riemann sphere $S^{2}$)
is a \emph{real slit domain} if its complement is a finite union of
closed intervals in $\mathbb{R}\cup\{\infty\}$.

We define $\mathcal{R}(X)\subseteq C(X)$ as the space of all rational
functions that are continuous on $X$. The definitions of contractivity
and complete contractivity are the usual definitions, and can be found
in \cite{Paulsen}.

\subsection{Introduction}

A key problem that this paper deals with is the \emph{rational dilation
conjecture}, which is as follows.

\begin{conjecture}
If $X\subseteq\mathbb{C}$ is a compact domain, $T\in\mathcal{B}(H)$
is a Hilbert space operator with $\sigma(T)\subseteq X$ and $\left\Vert f(T)\right\Vert \leq1$
for all $f\in\mathcal{R}(X)$, then there is some normal operator
$N\in\mathcal{B}(K)$, $K\supseteq H$, such that $\sigma(N)\subseteq B\:(=\partial X)$,
and $f(T)=P_{H}N\vert_{H}$. 
\end{conjecture}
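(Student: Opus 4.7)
The plan is to reduce the conjecture to a statement about complete contractivity, and then to try to establish it by analogy with the classical cases. By Arveson's dilation theorem, a unital homomorphism $\pi: \mathcal{R}(X) \to \mathcal{B}(H)$ admits a normal dilation with spectrum in $B$ if and only if $\pi$ is completely contractive. So the first step is to rewrite the conjecture as the implication ``$\pi_T$ contractive $\Rightarrow$ $\pi_T$ completely contractive,'' where $\pi_T(f) = f(T)$ is well-defined on $\mathcal{R}(X)$ by the Riesz functional calculus on $\sigma(T) \subseteq X$. This shifts the problem from building a normal dilation directly to establishing a matrix-valued norm inequality for $\pi_T$.

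Next I would try to mirror the proofs known in the two classical cases. For $X$ the closed disc ($n=0$) the implication is Sz.-Nagy's unitary dilation theorem; for $X$ the closed annulus ($n=1$) it is Agler's theorem, whose proof identifies a torus-indexed family of inner test functions on $X$ together with an Agler--Pick interpolation theorem tailored to them. The natural candidate in general is a family of inner functions of minimal degree on $X$, parametrised by the torus $\Pi = B_0 \times \cdots \times B_n$ from the definitions. The heart of the argument would be an Agler--Pick theorem on $X$: a Pick-type interpolation problem admits an $\mathcal{R}(X)$-solution of norm at most one iff a corresponding $\Pi$-indexed family of Pick kernels is positive. A realisation formula built from this family would then upgrade contractivity of $\pi_T$ to complete contractivity, closing the loop with Arveson.

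The hard part will be the step from scalar positivity to matrix positivity of the Pick kernels. For the disc and annulus the test family is ``rich enough'' to detect the full matrix norm, but for $n \geq 2$ there is no a priori reason this should persist. The abstract of the paper signals precisely that it does not, and that symmetry of $R$, equivalently hyperellipticity of the Schottky double, is the feature that makes the gap exhibitable: under the involution $\varpi$ the candidate test family collapses along the $2n+2$ fixed points on $B$, leaving room for a contractive/completely-contractive gap that a carefully chosen $T$ can witness.

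In summary, the natural attempt proceeds in three moves — Arveson reduction, an Agler--Pick theorem on $X$ keyed to $\Pi$-indexed inner test functions, and a realisation argument — and I would expect the third to go through conditional on the second. The honest expectation, given the paper's title and abstract, is that this line of attack does not prove the conjecture but instead locates the obstruction exactly at the Agler--Pick step, pointing to the counterexample construction on symmetric multiply connected $R$.
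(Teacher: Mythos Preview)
The statement you were asked to prove is labelled a \emph{Conjecture} in the paper, and the paper does not prove it --- quite the opposite. The entire paper is devoted to showing that the conjecture \emph{fails} on any symmetric domain with $n\ge 2$ holes (Theorem~\ref{thm:ElTheoremGrande}). So there is no ``paper's own proof'' to compare against; what there is, is a disproof in a large class of cases.

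You have correctly recognised this. Your proposal is not a proof but a diagnosis: you reduce via Arveson to ``contractive $\Rightarrow$ completely contractive,'' identify the $\Pi$-indexed family of scalar inner test functions as the natural tool, and then predict that the passage from scalar to matrix positivity breaks for $n\ge 2$. That is exactly the shape of the paper. The scalar Agler decomposition (Theorem~\ref{thm:Big}) does hold on $R$; what fails is its $2\times 2$ matrix analogue, and the paper exhibits an explicit inner $\Psi$ with $\rho_\Psi<1$ to witness this.

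One point where your account drifts from the paper: the role of the symmetry $\varpi$ is not that the test family ``collapses along the $2n+2$ fixed points.'' The test functions $\psi_p$ are defined and behave well regardless of symmetry. Rather, symmetry is used \emph{constructively}: it guarantees that $\overline{\psi_{\mathbf p}\circ\varpi}=\psi_{\varpi(\mathbf p)}$, so one can build a diagonal $\Psi_{S_0,\mathbf p}=\mathrm{diag}(\psi_{\mathbf p},\psi_{\varpi(\mathbf p)})$ with a standard zero set and then deform it to a nearby \emph{non-diagonalisable} $\Psi_{S_{\mathbf M},\mathbf p}$ (Lemmas~\ref{lem:PsiSp}--\ref{lem:PsiSmp}). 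Symmetry also forces the critical points of the Green's function, hence the poles $P_i$ of the Fay kernel, to lie on $J(\mathbb X)$, which is what makes the residue/linear-independence machinery of Theorem~\ref{thm:TheEpsilon} go through. The obstruction is then pinned down by Theorem~\ref{thm:rho=00003D00003D1=00003D00003D>diag}: for such $\Psi$, $\rho_\Psi=1$ would force diagonalisability, contradicting the construction.

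In short: your instinct that no proof exists here is right, and your localisation of the obstruction at the scalar-to-matrix step is accurate; only the mechanism by which symmetry enters needs adjusting.
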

A classical result of Sz.-Nagy shows that the rational dilation conjecture
holds if $X$ is the unit disc. A generalisation by Berger, Foias
and Lebow shows this holds for any simply connected domain (see \cite{Paulsen}).
A result by Agler (see \cite{AglerAnnulus}) shows that rational dilation
also holds if $X$ has one hole -- such as in an annulus. However,
subsequent work has shown that rational dilation fails on every two-holed
domain with analytic boundary (see \cite{DritschelRationalDilation},
and \cite{AglerComputation}).

The aim of this paper is to prove the following, which by a result
of Arveson (see \cite[Cor. 7.8]{Paulsen}), is equivalent to showing
that the rational dilation conjecture does not hold on any symmetric,
two-or-more-holed domain.

\begin{thm}
\label{thm:ElTheoremGrande}If $X$ is a symmetric domain in $\mathbb{C}$,
with $2\leq n<\infty$ holes, there is an operator $T\in\mathcal{B}(H)$,
for some Hilbert space $H$, such that the homomorphism $\pi:\mathcal{R}(X)\to\mathcal{B}(H)$
with $\pi\left(p/q\right)=p(T)\cdot q(T)^{-1}$ is contractive, but
not completely contractive. 
\end{thm}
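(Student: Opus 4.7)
The plan is to exhibit a contractive homomorphism $\pi : \mathcal{R}(X) \to \mathcal{B}(H)$ that fails to be completely contractive, which by the cited Arveson result is equivalent to the theorem. Following the strategy used by Agler--Harland--Raphael and Dritschel--McCullough in the $n=1$ (two-holed) case, I would recast this as a strict inequality between the \emph{scalar} and \emph{matrix} Nevanlinna--Pick interpolation problems on $X$ against the full family of minimal inner functions, and then produce explicit data witnessing that gap.

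First I would exploit the symmetry to put $X$ into workable coordinates. The anticonformal involution $\varpi$ with $2n+2$ fixed points on $B$ forces the Schottky double $Y$ of $X$ to be hyperelliptic of genus $n$, so there is a degree-two meromorphic map $\tau : Y \to S^{2}$ realising the quotient; restricted to $X$ this presents $X$ as a double cover of a real slit domain $\Omega$. Using $\tau$ together with theta functions on $Y$, I would write down an explicit $n$-parameter family $\{\phi_{\alpha}\}$ of inner functions on $X$, each of unit modulus on $B$ with one simple zero in $R$, parametrised by a real torus coming from the real points of the Jacobian of $Y$. By the Agler/test-function machinery these $\phi_{\alpha}$ form a collection of test functions for $H^{\infty}(X)$: a holomorphic $f$ on $R$ satisfies $\|f\|_{\infty}\le 1$ iff $1 - f(z)\overline{f(w)} = \sum_{\alpha}\bigl(1-\phi_{\alpha}(z)\overline{\phi_{\alpha}(w)}\bigr)k_{\alpha}(z,w)$ for some family of positive kernels $k_{\alpha}$.

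Second, I would translate contractivity (resp.\ complete contractivity) of a representation at finitely many nodes $z_{1},\dots,z_{m}$ into scalar (resp.\ matrix) positivity of the Pick matrices $\bigl[(1-\phi_{\alpha}(z_{i})\overline{\phi_{\alpha}(z_{j})})\bigr]_{i,j}$ over the family $\{\phi_{\alpha}\}$. The aim is to find nodes and a target matrix built out of $d\times d$ blocks so that the scalar Pick problem is positive for every $\phi_{\alpha}$, yet the joint matrix Pick problem fails positivity. Using the splitting of the Pick kernel into its $\varpi$-symmetric and $\varpi$-antisymmetric parts I would try to place the data so that the symmetric part reduces essentially to the annulus-type problem (where scalar positivity is attainable), while the antisymmetric part — which exists precisely because $n\ge 2$ pushes the Jacobian above one real dimension — supplies the obstruction to matrix positivity. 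One then reads off the desired $T$ as the model operator built from this failing Pick data.

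The main obstacle will be the last step: actually producing separating data and proving it separates. This requires a concrete analysis of how the family $\{\phi_{\alpha}\}$ moves as $\alpha$ varies on the Jacobian, in particular locating the extremal $\alpha$ that drive matrix positivity to the boundary along a well-chosen curve in the data, and ruling out by a dimension / open-mapping argument that any function in $\mathcal{R}(X)$ can close the residual gap. I would first carry this out in the symmetric two-holed case to recover the existing counterexamples as a sanity check, and then show that because the construction is local on the hyperelliptic surface and needs only two of the holes to feed the antisymmetric obstruction, it extends uniformly to every symmetric $X$ with $2\le n<\infty$ holes.
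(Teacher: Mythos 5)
Your overall strategy --- reduce the dilation question to a scalar-versus-matrix positivity gap over a family of test functions attached to the hyperelliptic structure --- is in the right family of ideas, but there are two problems, one factual and one structural. The factual one: your proposed test functions $\phi_{\alpha}$, ``each of unit modulus on $B$ with one simple zero in $R$'', do not exist. On a domain with $n\ge 2$ holes a non-constant inner function has at least $n+1$ zeroes (this fact is even needed later, in the non-diagonalisability argument); the correct test family consists of functions $\psi_{p}$ with exactly $n+1$ zeroes, parametrised by tuples $p\in B_{0}\times\cdots\times B_{n}$ of boundary points (an $(n+1)$-torus, reducible to an $n$-torus after normalisation), obtained as Cayley transforms of the extreme points of the compact convex set of normalised positive-real-part holomorphic functions. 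Building the family with the wrong zero count would break the Krein--Milman/extreme-point argument that shows these are test functions at all.

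The structural gap is that the step you defer --- ``actually producing separating data and proving it separates'' --- is the entire content of the theorem, and the route you sketch (explicit failing Pick data located via a symmetric/antisymmetric splitting of the kernel) has no mechanism for certifying that the matrix problem genuinely fails. The paper proceeds differently: it constructs an explicit $2\times2$ inner function $\Psi$ by perturbing the projections in a matrix-valued Herglotz measure supported at a symmetric pair of boundary tuples $\mathbf{p}$, $\varpi(\mathbf{p})$; it proves a rigidity statement (interpolation at $2n+3$ well-chosen points determines such a function) and a diagonalisation theorem: any inner $F$ with a standard zero set and $\rho_{F}=1$ admits a transfer-function representation which, via the Fay kernel $K^{b}$ and its residues at the poles $P_{1},\dots,P_{n}$ (this is where hyperellipticity and the fixed-point set $\mathbb{X}$ actually enter), forces $F=UDV$ with $U,V$ constant unitaries and $D$ diagonal. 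Evaluating at the points $\mathbf{p}_{i}$ and $\varpi(\mathbf{p}_{i})$ shows the constructed $\Psi$ admits no such factorisation, whence $\rho_{\Psi}<1$, and Arveson's criterion converts this into the desired contractive but not completely contractive representation. Without an analogue of that diagonalisation/rigidity step, your plan remains a research programme rather than a proof.
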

\begin{proof}[Proof Outline]First, we let $\mathcal{C}$ define the cone generated
by \[
\left\{ H(z)\left[1-\psi(z)\overline{\psi(w)}\right]H(w)^{*}:\,\psi\in\mathcal{B}\mathbb{H}(x),\, H\in M_{2}\left(\mathbb{H}(X)\right)\right\} \,,\]
 where $\mathcal{B}\mathbb{H}(X)$ is the unit ball of the space of
functions analytic in a neighbourhood of $X$, under the supremum
norm, and $M_{2}\left(\mathbb{H}(X)\right)$ is the space of $2\times2$
matrix valued functions analytic in a neighbourhood of $X$. For $F\in M_{2}\left(\mathbb{H}(X)\right)$,
we set \[
\rho_{F}=\sup\left\{ \rho>0:\, I-\rho^{2}F(z)F(w)^{*}\in\mathcal{C}\right\} \,.\]
 We show that there exists a function $F$ which is unitary valued
on $B$ (we say $F$ is \emph{inner}), but such that $\rho_{F}<1$.
We show that such a function generates a counter-example of the type
needed. To show that such a function exists, we show that if $F$
is inner, $\rho_{F}=1$ ($\left\Vert F\right\Vert =1$ by the max
modulus principle, so $\rho_{F}\leq1$), and the zeroes of $F$ are
{}``well behaved'', then $F$ can be diagonalised. We go on to show
that there is a non-diagonalisable inner function $F$, with well
behaved zeroes, which must therefore have $\rho_{F}<1$, so must be
a counter-example. 
\end{proof}

\section{Symmetries}

Details of the ideas discussed below can be found in \cite{Barker}.
A less detailed (but more widely available) presentation can be found
in \cite{BarkerArticle}.

\begin{thm}
Let $R\subseteq\mathbb{C}$ have $n+1$ analytic boundary curves,
$B_{0},\,\ldots,\, B_{n}\subseteq B$, with $n\geq2$, and let $Y$
be its Schottky double\label{thm:RedGreen}. The following are equivalent:
\begin{enumerate}
\item $Y$ is hyperelliptic;
\item $R$ is symmetric;
\item $R$ is conformally equivalent \label{itm:SlitDomain}to a real slit
domain $\Xi$.
\end{enumerate}
\end{thm}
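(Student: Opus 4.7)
The plan is to establish the three equivalences cyclically, $(3) \Rightarrow (2) \Rightarrow (1) \Rightarrow (3)$, at each step exploiting the natural action of involutions on $R$ or on its Schottky double $Y$.

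For $(3) \Rightarrow (2)$, I would observe that complex conjugation $z \mapsto \bar z$ restricts to an anticonformal involution of any real slit domain $\Xi \subset \mathbb{C}\cup\{\infty\}$. Its fixed set in $\partial\Xi$ consists of the $2(n+1) = 2n+2$ endpoints of the $n+1$ slits; transporting this back by the conformal equivalence $R \cong \Xi$ produces the required involution on $R$.

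For $(2) \Rightarrow (1)$, I would first extend $\varpi$ to an anticonformal involution $\tilde\varpi$ on $Y$ by declaring it to commute with the natural sheet-swap $\sigma$ (whose fixed set is $B$) and to agree with $\varpi$ on $R$. Then $\tau := \tilde\varpi \circ \sigma$ is a conformal involution of $Y$, whose fixed set is $\{p : \tilde\varpi(p) = \sigma(p)\}$; restricted to $B = \mathrm{Fix}(\sigma)$ this coincides with $\mathrm{Fix}(\varpi)$, yielding at least $2n+2$ fixed points. Since $g(Y) = n$, Riemann--Hurwitz applied to the degree-two quotient $Y \to Y/\langle\tau\rangle$ forces the quotient to have genus zero, so $Y$ admits a degree-two meromorphic function to $S^2$ and is thus hyperelliptic.

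For $(1) \Rightarrow (3)$, I would use that for $n \geq 2$ the hyperelliptic involution $\tau$ on $Y$ is unique, so that $\sigma\tau\sigma^{-1} = \tau$ and $\tau$ commutes with $\sigma$. The quotient $f: Y \to Y/\langle\tau\rangle \cong S^2$ then descends $\sigma$ to an anticonformal involution $\bar\sigma$ of $S^2$, whose nonempty fixed set is (after a M\"obius change of coordinates) $\mathbb{R}\cup\{\infty\}$. In a Weierstrass model $y^2 = p(x)$ compatible with $f$, the conditions $\sigma\tau = \tau\sigma$ and $\bar\sigma(x) = \bar x$ force $p$ to have real coefficients. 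The $2n+2$ real branch values then partition $\mathbb{R}\cup\{\infty\}$ into arcs on which $p$ alternates sign; the $n+1$ arcs where $p \geq 0$ constitute $f(B)$, and $\Xi := S^2 \setminus f(B)$ is a real slit domain. Finally, using that $\tau$ exchanges the two ``halves'' $R$ and $\sigma(R)$ of $Y\setminus B$, the restriction $f|_R$ becomes a bijection, hence a conformal equivalence, onto $\Xi$. The main obstacle I foresee is this last implication: after the normal-form reduction, one must confirm that each boundary component $B_i$ maps bijectively to a single slit (rather than $\tau$ pairing distinct $B_i$'s) and that $f|_R$ is globally injective. I expect the cleanest route is via the $p$-real Weierstrass model above rather than a direct fixed-point count on $Y$, since that model tracks the boundary components explicitly as preimages of the arcs $\{p \geq 0\}$.
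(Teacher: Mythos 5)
The paper does not actually prove this theorem (it defers to \cite{Barker} and only sketches the constructions), so there is no in-paper argument to compare with line by line; but your cycle $(3)\Rightarrow(2)\Rightarrow(1)\Rightarrow(3)$ is the standard route, and your $(2)\Rightarrow(1)$ step produces exactly the involution $\iota=J\circ\varpi$ that the paper records. Both $(3)\Rightarrow(2)$ (conjugation on $\Xi$ fixes the two prime-end {}``tips'' of each of the $n+1$ slits, giving $2n+2$ fixed points) and $(2)\Rightarrow(1)$ (the extension $\tilde\varpi$, the fixed-point count of $\tau=\tilde\varpi\circ\sigma$, and Riemann--Hurwitz forcing genus zero) are correct as written.

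There is, however, a genuine gap in $(1)\Rightarrow(3)$. From $\sigma\tau=\tau\sigma$ and the normalisation $\bar\sigma(x)=\bar x$ you correctly get that the branch divisor is conjugation-invariant, hence that $p$ has real \emph{coefficients}; but you then silently pass to {}``the $2n+2$ \emph{real} branch values.'' A real polynomial can have non-real roots occurring in conjugate pairs, and if any did, $f(B)$ would not be a union of $n+1$ disjoint closed intervals in $\mathbb{R}\cup\{\infty\}$ and your construction of $\Xi$ would collapse. What closes the gap is a Harnack-type component count: $\mathrm{Fix}(\sigma)=B$ has $g+1=n+1$ components, and for $y^{2}=p(x)$ with $p$ real of degree $2n+2$ the fixed locus of either lift of $x\mapsto\bar x$ has at most as many ovals as there are closed arcs of constant sign of $p$ on $\mathbb{R}\cup\{\infty\}$, which is at most (number of real roots)$/2$ $\leq n+1$, with equality only when all $2n+2$ roots are real. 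So maximality of the number of boundary curves forces every branch point of $\tau$ onto $B$, and only then do the $n+1$ arcs $\{p\geq0\}$ give $f(B)$ and $\Xi=S^{2}\setminus f(B)$ a genuine real slit domain. The residual issues you flagged (that each $B_{i}$ covers a single slit and that $f|_{R}$ is injective) are then handled by the same count together with the observation that $\tau$, being a local half-turn at a branch point lying on the curve $B$, interchanges the two components $R$ and $J(R)$ of $Y\setminus B$, so $f|_{R}$ is a bijection onto $\Xi$.
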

The proof can be found in \cite{Barker}, but we will briefly discuss
the constructions involved. We know from \cite[III.7.9]{FarkasKra}
that $Y$ is hyperelliptic if and only if there is a conformal involution
$\iota:Y\to Y$ with $2n+2$ fixed points. We find that $\iota$ is
given by \[
\iota(x)=\begin{cases}
J\circ\varpi(x) & x\in R\\
\varpi(x) & x\in B\\
\varpi\circ J(x) & x\in J(R)\end{cases}\,,\]
 where $J$ is the {}``mirror'' function on $Y$.

Also, if $\varsigma:\Xi\to R$ is the conformal mapping from part
\ref{itm:SlitDomain}, we have that $\varpi(\varsigma(\xi))=\varsigma\left(\overline{\xi}\right)$.

\begin{defn}
We define the \emph{fixed point set} of our symmetric domain $R$
as \[
\mathbb{X}:=\left\{ x\in R:\, x=\varpi(x)\right\} \,.\]

\end{defn}
\begin{rem}
In view of Theorem \vref{thm:RedGreen}, it makes sense to relabel
the components of $B$. We can see that $\mathbb{X}$ must be the
image of $\mathbb{R}\cap\Xi$ under $\varsigma$, so must consist
of a finite collection of paths running between fixed points of $B$.
We choose one of the two fixed points of $B_{0}$, and call it $p_{0}^{-}$.
We follow $\mathbb{X}$ from $p_{0}^{-}$ to another $B_{i}$ which
we relabel $B_{1}$; we call the fixed point we landed at $p_{1}^{+}$.
Label the other fixed point in $B_{1}$ as $p_{1}^{-}$, and repeat,
until we reach $p_{0}^{+}$. The section of $\mathbb{X}$ from $p_{i}^{-}$
to $p_{i+1}^{+}$, we call $\mathbb{X}_{i}$. 
\end{rem}
\begin{prop}
\label{pro:TopRedBox}If a meromorphic function on $Y$ has $n$ or
fewer poles, and all of these poles lie in $R\cup B$, then all of
these poles must lie on $B$. 
\end{prop}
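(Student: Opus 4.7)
My plan is to show that any non-constant meromorphic function on $Y$ of degree at most $n$ must be invariant under the hyperelliptic involution, and then to deduce the conclusion. By Theorem~\ref{thm:RedGreen}, $Y$ is hyperelliptic and carries an involution $\iota$ whose $2n+2$ fixed points (the Weierstrass points) coincide with the fixed points of $\varpi$ on $B$. Since $Y\to S^2$ is a double cover, the function field decomposes as $\mathbb{C}(Y)=\mathbb{C}(z)\oplus w\,\mathbb{C}(z)$, where $z$ is the hyperelliptic projection to $S^2$, $w^2=p(z)$ for the hyperelliptic polynomial $p$, and $\iota^{*}w=-w$. Thus $f$ has a unique expression $f=a+bw$ with $a,b\in\mathbb{C}(z)$, and is $\iota$-invariant precisely when $b=0$. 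Granted this invariance, the polar divisor of $f$ is $\iota$-invariant on $Y$; but $\iota(R)=J(R)$ is disjoint from $R\cup B$, so a pole in $R$ would force a (forbidden) pole in $J(R)$, and hence every pole of $f$ must lie on $B$.

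The core task is therefore to prove $b=0$ whenever $\deg f\le n$. Suppose $b\ne 0$. The decisive input is a parity observation at Weierstrass points: the hyperelliptic projection is ramified of index $2$ there, so every element of $\mathbb{C}(z)$ has \emph{even} order at each Weierstrass point, whereas $w$ has odd order. Consequently $a$ has even order and $bw$ has odd (in particular nonzero) order at every Weierstrass point, so each of the $2n+2$ Weierstrass points is either a zero or a pole of $bw$ with odd multiplicity. If $z_{W}$ and $p_{W}$ count the Weierstrass zeros and poles respectively, then $z_{W}+p_{W}=2n+2$, and since the degree of $bw$ equals both its total zero multiplicity and its total pole multiplicity, $\deg(bw)\ge\max(z_{W},p_{W})\ge n+1$.

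To convert this into a bound on $\deg f$, I would argue orbit by orbit. At each Weierstrass pole of $bw$ the parity mismatch with $a$ precludes any cancellation in $f=a+bw$, so $f$ inherits a pole of at least the same order; these contribute at least $T$ in total, where $T$ denotes the total pole multiplicity of $bw$ at Weierstrass points. The non-Weierstrass poles of $bw$ come in $\iota$-pairs $\{q,\iota q\}$ of equal order $n_{q}$; since the local expansions of $bw$ at $q$ and at $\iota q$ differ only by a sign while those of $a$ agree, $f$ cannot be regular at both points (that would force the negative part of $bw$ to vanish), and the pole that does appear has order at least $n_{q}$. Hence the pairs contribute at least $\tfrac12(\deg(bw)-T)$, giving $\deg f\ge\tfrac12(\deg(bw)+T)\ge n+1$, where the last inequality uses $\deg(bw)\ge z_{W}=2n+2-p_{W}\ge 2n+2-T$. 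This contradicts $\deg f\le n$ and forces $b=0$. The main obstacle is the parity bookkeeping at Weierstrass points that underlies both the lower bound on $\deg(bw)$ and the orbit-by-orbit count; the remainder of the argument is routine.
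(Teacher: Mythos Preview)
Your argument is correct, and the final step (invariance under $\iota$ forces the pole divisor to be $\iota$-invariant, hence confined to $B$) matches the paper. However, your route to $\iota$-invariance is far more elaborate than needed. The paper disposes of it in two lines: if $f$ has at most $n$ poles then so does $f\circ\iota$, so $f-f\circ\iota$ has at most $2n$ poles; but $f-f\circ\iota$ vanishes at each of the $2n+2$ fixed points of $\iota$, hence is identically zero. Note that $f-f\circ\iota$ is precisely your $2bw$, so the paper's zero/pole count is a direct shortcut to $b=0$ that bypasses the function-field decomposition, the parity analysis at Weierstrass points, and the orbit-by-orbit pole accounting. Your approach does give a more structural explanation of \emph{why} low-degree functions on a hyperelliptic curve descend to the quotient, and the parity bookkeeping is handled correctly, but for this proposition the paper's argument is both shorter and more transparent.
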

\begin{proof}
Suppose $f$ has $n$ or fewer poles. Then $f\circ\iota$ also has
$n$ or fewer poles, so $f-f\circ\iota$ has $2n$ or fewer poles.
However, if $x$ is a fixed point of $\iota$, $f(x)-f\circ\iota(x)=0$,
and since $\iota$ has $2n+2$ fixed points, $f-f\circ\iota$ has
at least $2n+2$ zeroes. This is only possible if $f-f\circ\iota\equiv0$,
so if $x$ is a pole of $f$, then $\iota(x)$ is a pole of $f$,
which is a contradiction unless $x\in B$. 
\end{proof}

\section{Inner Functions}

Many of the ideas found in this section can also be found in \cite{AglerComputation}
and \cite{DritschelRationalDilation}.

Results in this section often require us to choose a point $b\in R$.
Usually, $b$ will be determined by the particular application, but
in this section we make no requirements on the choice of $b$.

\subsection{Harmonic and Analytic Functions}

If $\omega_{b}$ is harmonic measure at $b$, and $s$ is arc length
measure, by an argument like the one in \cite{DritschelRationalDilation},
we can find a Poisson kernel $\mathbb{P}:R\times B\to\mathbb{R}$
such that for $h$ harmonic on $R$ and continuous on $B$,\[
h(w)=\int_{B}h(z)\mathbb{P}(w,\, z)ds(z)\,.\]
 Equivalently, $\mathbb{P}$ is given by the Radon-Nikodym derivative
\[
\mathbb{P}(w,\,\cdot)=\frac{d\omega_{w}}{ds}\,.\]
 We know that $\mathbb{P}$ is harmonic in $R$ at each point in $B$,
and that for any positive $h$ harmonic on $R$, and continuous on
$X$ there exists some positive measure $\mu$ on $B$ such that \[
h(w)=\int_{B}\mathbb{P}(w,\, z)d\mu(z)\,.\]
 Conversely, given a positive measure $\mu$ on $B$, this formula
defines a positive harmonic function.

We let $h_{j}$ denote the solution to the Dirichlet problem which
is 1 on $B_{j}$ and 0 on $B_{i}$, where $i\neq j$. We can see that
this corresponds to the arc length measure on $B_{j}$.

We define $Q_{j}:B\to\mathbb{R}$ as the outward normal derivative
of $h_{j}$, and define the \emph{periods} of $h$ by \[
P_{j}(h)=\int_{B}Q_{j}d\mu\,.\]
 It should be clear that $h$ is the real part of an analytic function
if and only if $P_{j}(h)=0$ for $j=0,\,1,\,\ldots,\, n$.

\begin{lem}
The functions $Q_{j}$ have no zeroes on $B$. Moreover, $Q_{j}>0$
on $B_{j}$ and $Q_{j}<0$ on $B_{l}$ for $l\neq j$. 
\end{lem}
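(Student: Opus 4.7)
The plan is to combine the strong maximum principle with the Hopf boundary point lemma. Since $h_j$ is harmonic and nonconstant on $R$ with boundary values in $\{0,1\}$, the maximum principle gives $0<h_j(w)<1$ for every $w\in R$. In particular, each $z\in B_j$ is a strict boundary maximum of $h_j$ on $\overline{R}$, and each $z\in B_l$ with $l\neq j$ is a strict boundary minimum.

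For $z\in B_j$, the analyticity of the boundary provides a disc contained in $R$ and tangent to $B$ at $z$ (pull back a disc tangent to $\mathbb{T}$ from inside via $\phi_j$), so the interior ball condition needed for Hopf's lemma holds. Hopf's boundary point lemma then yields that the outward normal derivative of $h_j$ at $z$ is strictly positive, i.e.\ $Q_j(z)>0$. The same argument applied at a boundary minimum (or to $-h_j$) gives $Q_j(z)<0$ whenever $z\in B_l$ with $l\neq j$. Together these two cases exhaust $B$ and yield both claims simultaneously.

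I do not foresee a serious obstacle; the argument is essentially a textbook application of the two maximum principles. The one point worth flagging is that both the strong maximum principle and Hopf's lemma are genuinely needed: the Dirichlet data alone only shows that $h_j$ attains a boundary extremum on each $B_i$, which merely delivers the non-strict inequality $Q_j\ge 0$ on $B_j$ and $Q_j\le 0$ on $B_l$. Upgrading to strict inequalities requires $h_j$ to be strictly less than $1$ (resp.\ strictly greater than $0$) in some one-sided neighbourhood in $R$, which is exactly what the strong maximum principle supplies, and the analytic interior ball condition is what lets Hopf's lemma convert this into a strict sign for the normal derivative.
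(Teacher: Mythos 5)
Your argument is correct, but it takes a genuinely different route from the paper. You invoke the strong maximum principle to get $0<h_j<1$ in $R$ and then the Hopf boundary point lemma (with the interior ball condition supplied by the analytic boundary) to upgrade the non-strict sign of the normal derivative to a strict one. The paper instead only uses the soft maximum-principle observation to get $Q_j\ge 0$ on $B_j$ and $Q_j\le 0$ on $B_l$, and then rules out a zero of $Q_j$ by a complex-analytic argument: after normalising $B_0$ to the unit circle, it extends $h_j$ harmonically across $B_0$ by odd reflection, notes that at a putative zero of $Q_j$ both the normal and tangential derivatives of $h_j$ vanish (the latter because $h_j$ is constant on $B_0$), so the local analytic completion $f$ has a critical point and hence a ramification of order at least two there; this is incompatible with $f$ mapping the two sides of the circle into the two half-planes. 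Your Hopf-lemma argument is the more standard and more economical one, and it avoids the reflection construction entirely; the paper's argument is more self-contained in the sense that it stays within elementary complex function theory and the reflection principle, which are tools it uses elsewhere. One small point worth making explicit in your write-up: since the boundary is analytic, $h_j$ extends harmonically across $B$, so the normal derivative $Q_j$ genuinely exists as a classical derivative and the strict lower bound on the difference quotient from Hopf's lemma does translate into $Q_j(z)>0$ rather than merely a one-sided liminf statement.
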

\begin{proof}
As $X$ has analytic boundary, we can assume without loss of generality
that $B_{0}=\mathbb{T}$. We know that $h_{j}$ takes its minimum
and maximum on its boundary. Since $h_{j}$ equals one on $B_{j}$,
and zero on $B_{l}$ if $l\neq j$, these must be its maximum and
minimum respectively, so $h_{j}$ is non-decreasing towards $B_{j}$,
and non-increasing towards $B_{l}$, so $Q_{j}\geq0$ on $B_{j}$
and $Q_{j}\leq0$ on $B_{l}$.

We can see by the above argument that we only need show that $Q_{j}\neq0$.
We let $R^{\prime}$ be the reflection of $R$ about $B_{0}$ (which
we are assuming is the unit circle). We can extend $h_{j}$ to a harmonic
function on $X\cup R^{\prime}$ by setting \[
h_{j}(z)=-h_{j}(1/\bar{z})\]
 on $R^{\prime}$.

If $Q_{j}$ had infinitely many zeroes on $B_{0}$, then $Q_{j}$
would be identically zero, so we suppose $Q_{j}$ has finitely many
zeroes on $B_{0}$.

Suppose $Q_{j}$ has a zero $z$, and a small, simply connected neighbourhood
$N(z)$. By choosing $N(z)$ small enough, we can ensure that $N(z)$
contains no other zeroes. Clearly, $h_{j}$ forms the real part of
some holomorphic function $f$ on $N(z)$. We know that $\partial h_{j}/\partial n=Q_{j}=0$,
and because $h_{j}$ is constant on $B_{0}$, we know that the tangential
derivative of $h_{j}$, $\partial h_{j}/\partial t$, is also zero,
so $f$ has derivative zero at $z$, so $f$ has a ramification of
order at least two at $z$. We also know that $f$ maps everything
outside the unit disc to the left half plane, and everything inside
the unit disc to the right half plane, but clearly this is impossible,
so $Q_{j}$ cannot have a zero.

A similar argument holds for $B_{1},\,\ldots,B_{n}$. 
\end{proof}
\begin{cor}
\label{cor:NotZeroOnBj}If $h$ is a non-zero positive harmonic function
on $R$ which is the real part of an analytic function, and $h$ is
represented in terms of a positive measure $\mu$, then $\mu(B_{j})>0$
for each $j$. 
\end{cor}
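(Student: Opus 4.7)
The plan is to obtain a contradiction from assuming $\mu(B_j)=0$ for some fixed $j$, by combining the period identity with the strict sign information on $Q_j$ provided by the preceding lemma.

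First I would record the key ingredients already laid out in the paper. Since $h$ is the real part of an analytic function on $R$, the period hypothesis gives $P_j(h)=\int_B Q_j\,d\mu=0$ for every $j\in\{0,1,\ldots,n\}$. The lemma just proved tells me that $Q_j>0$ strictly on $B_j$ and $Q_j<0$ strictly on each $B_l$ with $l\neq j$. These are the only two facts I need.

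Now I would fix $j$ and suppose for contradiction that $\mu(B_j)=0$. Splitting the integral over the boundary components, the identity $P_j(h)=0$ reduces to
\[
0 \;=\; \int_B Q_j\,d\mu \;=\; \sum_{l\neq j}\int_{B_l} Q_j\,d\mu.
\]
Every term on the right is the integral of a strictly negative continuous function against a positive finite measure, hence is nonpositive, and is strictly negative as soon as $\mu(B_l)>0$. Thus the equality forces $\mu(B_l)=0$ for all $l\neq j$. Combined with the standing assumption $\mu(B_j)=0$, this gives $\mu\equiv 0$ on $B$, which via the Poisson representation $h(w)=\int_B \mathbb{P}(w,z)\,d\mu(z)$ makes $h\equiv 0$, contradicting the hypothesis that $h$ is non-zero.

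There is no real obstacle here: the statement is essentially a bookkeeping consequence of the sign pattern of $Q_j$ and the vanishing of all periods. The only delicate point worth emphasising is that continuity of $Q_j$ together with the strict sign from the lemma is what upgrades $\int_{B_l}Q_j\,d\mu\le 0$ to a strict inequality whenever $\mu(B_l)>0$; without the strict sign (e.g.\ if $Q_j$ could vanish on $B_l$), the argument would fail. Since the lemma explicitly rules out zeros of $Q_j$ on $B$, this is exactly what lets the proof go through.
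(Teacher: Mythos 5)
Your proof is correct and follows essentially the same route as the paper: both use the vanishing of the period $P_j(h)=\int_B Q_j\,d\mu$ together with the strict negativity of $Q_j$ off $B_j$ established in the preceding lemma. Your write-up is in fact slightly more careful than the paper's one-line argument, since you explicitly dispose of the degenerate case where $\mu$ vanishes on all components (the paper tacitly assumes $\mu(B\setminus B_j)>0$ in asserting $P_j(h)<0$), but this is a minor refinement of the same idea rather than a different approach.
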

\begin{proof}
If $\mu(B_{j})=0$, then as $Q_{j}<0$ on $B\backslash B_{j}$, $P_{j}(h)<0$,
a contradiction. Thus, $\mu(B_{j})>0$. 
\end{proof}

\subsection{Some Matrix Algebra}

We wish to show that at each $p\in\Pi$, the vector \[
V^{n}=\det\left(\begin{array}{cccc}
\mathbf{e}_{0} & \mathbf{e}_{1} & \cdots & \mathbf{e}_{n}\\
Q_{1}(p_{0}) & Q_{1}(p_{1}) & \cdots & Q_{1}(p_{n})\\
Q_{2}(p_{0}) & Q_{2}(p_{1}) & \cdots & Q_{2}(p_{n})\\
\vdots & \vdots & \ddots & \vdots\\
Q_{n}(p_{0}) & Q_{n}(p_{1}) & \cdots & Q_{n}(p_{n})\end{array}\right)\]
 has only positive coordinates. It helps to note that in three dimensions
\[
\mathbf{x}\times\mathbf{y}=\det\left(\begin{array}{ccc}
\mathbf{e}_{0} & \mathbf{e}_{1} & \mathbf{e}_{2}\\
x_{0} & x_{1} & x_{2}\\
y_{0} & y_{1} & y_{2}\end{array}\right)\,.\]

It will also be helpful to write \[
V^{n}=\left|\begin{array}{cccccc}
\mathbf{e}_{0} & \mathbf{e}_{1} & \mathbf{e}_{2} & \mathbf{e}_{3} & \cdots & \mathbf{e}_{n}\\
- & + & - & - & \cdots & -\\
- & - & + & - & \cdots & -\\
- & - & - & + & \cdots & -\\
\vdots & \vdots & \vdots & \vdots & \ddots & \vdots\\
- & - & - & - & \cdots & +\end{array}\right|\,,\]
 noting that $Q_{j}(p_{j})>0$, and $Q_{i}(p_{j})<0$ for $i\neq j$.
From here on, positive and negative quantities will simply be denoted
by $(+)$ and $(-)$, respectively.

\begin{lem}
\label{lem:PosSquare}All sub-matrices of $V^{n}$ of the form \[
\left(\begin{array}{ccccc}
+ & - & - & \cdots & -\\
- & + & - & \cdots & -\\
- & - & + & \cdots & -\\
\vdots & \vdots & \vdots & \ddots & \vdots\\
- & - & - & \cdots & +\end{array}\right)\]
 have positive determinant. 
\end{lem}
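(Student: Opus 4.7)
The plan is to reduce to principal sub-matrices of the form $M_S = [Q_i(p_j)]_{i,j\in S}$ with $S\subseteq\{1,\dots,n\}$, then extract strict column diagonal dominance from a partition-of-unity identity, and finish with Gershgorin's theorem.

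First I would identify the shape. The $+$ entries of $V^{n}$ sit exactly at positions $(i,i)$ for $i\in\{1,\dots,n\}$ (where $Q_i(p_i)>0$ by the preceding lemma), so a $k\times k$ sub-matrix whose diagonal is all $+$ must use the same row- and column-indices $S\subseteq\{1,\dots,n\}$, yielding $M_S=[Q_i(p_j)]_{i,j\in S}$; the preceding lemma makes its diagonal strictly positive and its off-diagonal entries strictly negative.

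The crucial identity would be $\sum_{j=0}^{n} h_j\equiv 1$ on $R$ (the sum solves the Dirichlet problem for the constant function $1$), so upon normal differentiation on $B$, $\sum_{j=0}^{n} Q_j\equiv 0$ on $B$. Evaluating at $p_m\in B_m$ for $m\in S$ and rearranging,
\[
\sum_{i\in S} Q_i(p_m) = -Q_0(p_m) - \sum_{i\in\{1,\dots,n\}\setminus S} Q_i(p_m),
\]
which is a sum of strictly positive quantities since every index on the right differs from $m$. This says precisely that $(M_S)_{mm} > \sum_{\ell\neq m}\,|(M_S)_{\ell m}|$ for each column $m$, i.e., $M_S$ is strictly column diagonally dominant.

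Finally I would invoke Gershgorin's circle theorem on $M_S^{T}$ (which shares its spectrum with $M_S$): every eigenvalue lies in a disc centred at the positive number $(M_S)_{mm}$ of radius strictly smaller than $(M_S)_{mm}$, hence strictly in the open right half-plane. Since $M_S$ is real, non-real eigenvalues occur in conjugate pairs, so $\det M_S > 0$. I foresee no real obstacle: the only observation of substance is the column-sum identity coming from $\sum h_j\equiv 1$, after which the conclusion is a standard Z-matrix/Gershgorin argument.
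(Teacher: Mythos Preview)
Your proposal is correct and follows essentially the same route as the paper: both arguments relabel so that the sub-matrix is $[Q_i(p_j)]_{i,j\in S}$, use $\sum_{j=0}^n h_j\equiv 1$ to obtain $\sum_{j=0}^n Q_j\equiv 0$ and hence strict diagonal dominance, and then apply Gershgorin's theorem together with the real/conjugate-pair observation to conclude $\det>0$. The only cosmetic difference is that the paper names the transpose $A$ and applies Gershgorin to its rows, whereas you phrase it as column dominance of $M_S$ and pass to $M_S^T$; these are the same computation.
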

\begin{proof}
We can assume, without loss of generality, that such matrices are
of the form \[
\left(\begin{array}{cccc}
Q_{1}(p_{1}) & Q_{1}(p_{2}) & \cdots & Q_{1}(p_{k})\\
Q_{2}(p_{1}) & Q_{2}(p_{2}) & \cdots & Q_{2}(p_{k})\\
\vdots & \vdots & \ddots & \vdots\\
Q_{k}(p_{1}) & Q_{k}(p_{2}) & \cdots & Q_{k}(p_{k})\end{array}\right):=A^{T}\]
 by a simple relabelling of boundary curves. We note that \[
\sum_{j=0}^{n}h_{j}\equiv1\,,\]
 so in particular \[
\sum_{j=0}^{n}Q_{j}(x)=0\]
 for all $x\in B$. So, if $1\leq i\leq k$, then \[
\sum_{j=1}^{k}Q_{j}(p_{i})=-\left(Q_{0}(p_{i})+\sum_{j=k+1}^{n}Q_{j}(p_{i})\right)>0\,.\]
 We now apply Gershgorin's circle theorem. Since $A_{ij}=Q_{j}(p_{i})$,
the eigenvalues of $A$ are in the set \[
S:=\bigcup_{i=1}^{N}D\left(\sum_{\substack{j=1\\
j\neq i}
}^{n}A_{ij},A_{ii}\right):=\bigcup_{i=1}^{N}S_{i}\,,\]
 where $D(\epsilon,\, x)\subseteq\mathbb{C}$ is the ball centred
at $x$ of radius $\epsilon$. Now, if $\lambda\in S_{i}$, then $\left|\lambda-A_{ii}\right|<\sum_{j\neq i}A_{ij}$,
so in particular \[
\Re(\lambda)>A_{ii}-\sum_{j\neq i}\left|A_{ij}\right|=A_{ii}+\sum_{j\neq i}A_{ij}=\sum_{j=1}^{n}A_{ij}>0\,.\]
 Now, all terms in the matrix $A$ are real, so if $\lambda$ is an
eigenvalue of $A$, then either $\lambda>0$, or $\bar{\lambda}$
is also an eigenvalue. We know that the determinant of a matrix is
given by the product of its eigenvalues, counting multiplicity. Therefore,
the determinant of $A$ is a product of positive reals, and terms
of the form $\lambda\bar{\lambda}=\left|\lambda\right|^{2}$, which
are also positive and real, so $\det(A)$ is positive, so $\det\left(A^{T}\right)$
is positive. 
\end{proof}
\begin{lem}
$V^{n}$ has only\label{lem:PosCoeffs} positive coefficients. 
\end{lem}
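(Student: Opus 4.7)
The plan is to compute each coordinate of $V^n$ as a signed cofactor and reduce its positivity to Lemma~\ref{lem:PosSquare} via a matrix reorganisation. Expanding the defining determinant along the first row gives $V^n_i = (-1)^i M_i$, where $M_i$ is the $n \times n$ minor obtained by deleting the $i$-th column from $(Q_j(p_k))_{j=1,\ldots,n;\, k=0,\ldots,n}$.

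For $i=0$ the minor $M_0=(Q_j(p_k))_{j,k=1,\ldots,n}$ already has the positive-diagonal/negative-off-diagonal pattern of Lemma~\ref{lem:PosSquare}, so $V^n_0=M_0>0$ immediately. For $i\geq 1$, however, $M_i$ retains the column $p_0$, whose entries are all negative, so the hypotheses of Lemma~\ref{lem:PosSquare} fail on the nose. My plan is to rewrite $V^n_i$ as the principal minor $\hat M_{i,i}$ of the full $(n+1)\times(n+1)$ matrix $\tilde A:=(Q_\ell(p_k))_{\ell,k=0,\ldots,n}$ obtained by deleting row $i$ and column $i$. This $\hat M_{i,i}$ is indexed by the \emph{same} set $S:=\{0,1,\ldots,n\}\setminus\{i\}$ in both rows and columns, has positive diagonal $Q_\ell(p_\ell)$ and negative off-diagonal, and becomes literally an instance of Lemma~\ref{lem:PosSquare} after relabelling the boundary so that $B_i$ becomes the new outer curve $B_0$; hence $\hat M_{i,i}>0$.

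The identification $V^n_i=\hat M_{i,i}$ rests on the relation $\sum_{\ell=0}^n Q_\ell\equiv 0$ (obtained by differentiating $\sum_\ell h_\ell\equiv 1$). Specifically, in $\hat M_{i,i}$ the row operation of adding every other row to the $Q_0$ row converts that row into $-Q_i$; after negating this row and performing $i-1$ adjacent transpositions to move $Q_i$ into its natural position, the resulting matrix is exactly $M_i$. Tracking the sign changes yields $M_i=(-1)^i\hat M_{i,i}$, and so $V^n_i=(-1)^i M_i=\hat M_{i,i}$.

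The principal obstacle is noticing the right reorganisation: $M_i$ suppresses the $Q_0$ row while retaining $Q_i$, whereas $\hat M_{i,i}$ does the reverse, and the identity $\sum_\ell Q_\ell=0$ is precisely what lets one swap between them. Once this is in hand, the rest is careful sign bookkeeping and an appeal to Lemma~\ref{lem:PosSquare}.
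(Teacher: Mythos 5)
Your proof is correct, and it takes a genuinely different route from the paper's. The paper proceeds by induction on $n$: it introduces auxiliary sign-pattern determinants $d_{i}^{n}$, shows $d_{i}^{n}=(-1)^{i-1}d_{1}^{n}$ by cyclic permutation, derives a recursion expressing $d_{1}^{n}$ in terms of the $d_{j}^{n-1}$, and identifies the summands with the coordinates of $V^{n-1}$. You replace all of that with a single non-inductive identity: because the columns of the full matrix $\tilde A=(Q_{\ell}(p_{k}))_{\ell,k=0}^{n}$ sum to zero, the signed cofactor $(-1)^{i}M_{i}$ appearing as the $\mathbf{e}_{i}$-coordinate of $V^{n}$ equals the principal minor $\hat M_{i,i}$ of $\tilde A$ — your row operation (summing the rows indexed by $S=\{0,\ldots,n\}\setminus\{i\}$ into the $Q_{0}$ row to produce $-Q_{i}$, negating, and repositioning) and the sign count $M_{i}=(-1)^{i}\hat M_{i,i}$ both check out. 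Each $\hat M_{i,i}$ then has the positive-diagonal/negative-off-diagonal pattern with positive column sums ($\sum_{\ell\in S}Q_{\ell}(p_{k})=-Q_{i}(p_{k})>0$ for $k\in S$), so the Gershgorin argument of Lemma \ref{lem:PosSquare} applies. The one point needing the care you already take: $\hat M_{i,i}$ contains the $Q_{0}$ row, so it is not literally a submatrix of $V^{n}$; the appeal to Lemma \ref{lem:PosSquare} goes through because its proof reduces to an arbitrary relabelling of the boundary curves, exactly as you invoke. Your version is shorter, avoids the opaque $d_{i}^{n}$ bookkeeping, and makes the positivity transparent — the coordinates of $V^{n}$ simply \emph{are} the principal minors of $\tilde A$.
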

\begin{proof}
We define \[
d_{i}^{n}=\left|\begin{array}{c|c}
\overbrace{\begin{array}{ccccc}
- & + & - & \cdots & -\\
- & - & + & \ddots & \vdots\\
- & - & - & \ddots & -\\
\vdots & \vdots & \ddots & \ddots & +\\
- & \cdots & - & - & -\end{array}}^{i\times i} & -\\
\hline - & \begin{array}{ccccc}
+ & - & - & \cdots & -\\
- & + & - & \ddots & -\\
- & - & + & \ddots & \vdots\\
\vdots & \vdots & \ddots & \ddots & -\\
- & - & \cdots & - & +\end{array}\end{array}\right|\,.\]
 For our purposes, all that matters is the signs of the elements of
this matrix, and that Lemma \vref{lem:PosSquare} holds. Cyclically
permuting the first $i$ rows gives \[
d_{i}^{n}=(-1)^{i-1}\left|\begin{array}{ccccc}
- & - & - & - & -\\
- & + & - & - & -\\
- & - & + & - & \vdots\\
- & \vdots & \ddots & \ddots & -\\
- & - & \cdots & - & +\end{array}\right|=(-1)^{i-1}d_{1}^{n}\,.\]
 We can see that \begin{align*}
V^{n}= & \left|\begin{array}{cccccc}
\mathbf{e}_{0} & \mathbf{e}_{1} & \mathbf{e}_{2} & \mathbf{e}_{3} & \cdots & \mathbf{e}_{n}\\
- & + & - & - & \cdots & -\\
- & - & + & - & \cdots & -\\
- & - & - & + & \cdots & -\\
\vdots & \vdots & \vdots & \vdots & \ddots & \vdots\\
- & - & - & - & \cdots & +\end{array}\right|\\
= & (+)\mathbf{e}_{0}+\sum_{i=1}^{n}(-1)^{i}d_{i}^{n}\mathbf{e}_{i}\end{align*}
 and \begin{align*}
d_{1}^{n}= & (-(+))-(-d_{1}^{n-1})+(-d_{2}^{n-2})-\cdots+(-1)^{n-1}(-d_{n-1}^{n-1})\\
= & (-)+\sum_{j=1}^{n-1}(-1)^{j+1}(d_{j}^{n-1})\,.\end{align*}

We now proceed by induction. We first consider the case where $k=1$.
We can see that \[
\left|\begin{array}{cc}
\mathbf{e}_{0} & \mathbf{e}_{1}\\
- & +\end{array}\right|=(+)\mathbf{e}_{0}-(-)\mathbf{e}_{1}=(+)\mathbf{e}_{0}+(+)\mathbf{e}_{1}\,,\]
 so the lemma holds for $k=1$. Now suppose that the lemma holds for
$k-1$, and consider $V^{k}$. The $\mathbf{e}_{0}$ coordinate is
positive, by Lemma \vref{lem:PosSquare}. The $\mathbf{e}_{i}$ coordinate
is given by \begin{multline*}
(-1)^{i}d_{i}^{k}=(-1)^{i}(-1)^{i-1}d_{1}^{k}=(-)\left((-)+\sum_{j=1}^{k-1}(-1)^{j+1}(d_{j}^{k-1})\right)\\
=(+)+\sum_{j=1}^{k-1}\underbrace{(-1)^{j}(d_{j}^{k-1})}_{\mathbf{e}_{j}\text{term of }V^{k-1}}=(+)\,,\end{multline*}
 so the lemma holds for $k$, and so holds for all $k\in\mathbb{N}$. 
\end{proof}
\begin{cor}
F\label{cor:PosUnitKernel}or each $p\in\Pi$, the kernel of \[
M(p)=\left(\begin{array}{ccccc}
Q_{1}(p_{0}) & Q_{1}(p_{1}) & Q_{1}(p_{2}) & \cdots & Q_{1}(p_{n})\\
Q_{2}(p_{0}) & Q_{2}(p_{1}) & Q_{2}(p_{2}) & \cdots & Q_{2}(p_{n})\\
\vdots & \vdots & \vdots & \ddots & \vdots\\
Q_{n}(p_{0}) & Q_{n}(p_{1}) & Q_{n}(p_{2}) & \cdots & Q_{n}(p_{n})\end{array}\right)\]
 is one dimensional and spanned by a vector with strictly positive
entries. Further, we can define a continuous function $\kappa:\Pi\to\mathbb{R}^{n+1}$
such that $\kappa(p)$ is entry-wise positive, and $\kappa(p)$ is
in the kernel of $M(p)$. 
\end{cor}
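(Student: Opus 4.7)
The plan is to take $\kappa(p) := V^{n}(p)$, where $V^{n}(p)$ is the vector already analysed in Lemma \vref{lem:PosCoeffs}, and verify that this choice does everything the corollary asks for.

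First I would check that, for each fixed $p \in \Pi$, the vector $V^{n}(p)$ lies in $\ker M(p)$. This is the standard ``generalised cross product'' identity: if in the defining determinant for $V^{n}$ we replace the top row $(\mathbf{e}_0, \ldots, \mathbf{e}_n)$ by the $j$-th row $(Q_{j}(p_0), \ldots, Q_{j}(p_n))$ (for any $1 \le j \le n$), the resulting $(n+1)\times(n+1)$ determinant has two identical rows and so vanishes. Expanding that vanishing determinant along its (new) first row is precisely the statement that $V^{n}(p)$ is orthogonal to the $j$-th row of $M(p)$. Hence $V^{n}(p) \in \ker M(p)$.

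Next, positivity of the entries of $V^{n}(p)$ is exactly what Lemma \vref{lem:PosCoeffs} provides. Because the entries are all strictly positive, in particular they are nonzero; but these entries are, up to sign, the $n\times n$ minors of $M(p)$, so $M(p)$ has rank $n$. Since $M(p)$ is $n\times(n+1)$, this forces $\dim \ker M(p) = 1$, and combined with the previous paragraph we conclude that $\ker M(p)$ is spanned by $V^{n}(p)$, a vector with strictly positive entries.

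For the continuous selection, note that each $Q_{j}$ is continuous on $B$ (it is the outward normal derivative of the harmonic function $h_{j}$, which by analyticity of the boundary extends harmonically across $B$), so $p \mapsto Q_{j}(p_i)$ is continuous on $\Pi$. The cofactor-expansion formula for $V^{n}$ expresses each coordinate of $V^{n}(p)$ as a polynomial in these entries, so $\kappa(p) := V^{n}(p)$ is continuous $\Pi \to \mathbb{R}^{n+1}$, entrywise positive, and spans $\ker M(p)$ at each point. There is no real obstacle here: Lemma \vref{lem:PosCoeffs} did the substantive combinatorial work, and the remaining argument is pure linear algebra assembled from that lemma.
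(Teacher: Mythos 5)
Your proposal is correct and follows essentially the same route as the paper: take $\kappa(p)=V^{n}(p)$, get positivity from Lemma \ref{lem:PosCoeffs}, and get rank $n$ (hence a one-dimensional kernel) from the nonvanishing of the $n\times n$ minors, which is the content of Lemma \ref{lem:PosSquare}. You spell out the ``two identical rows'' cross-product identity and the continuity of $p\mapsto V^{n}(p)$ more explicitly than the paper does, but these are exactly the steps the paper treats as clear from the definitions.
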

\begin{proof}
We can see that $M(p)$ is always rank $n$, as the right hand $n\times n$
sub-matrix is invertible, by Lemma \ref{lem:PosSquare}, so its kernel
is everywhere rank one. If at each $p\in\Pi$ we take the $V^{n}$
defined earlier, and define this as $\kappa(p)$, it is clear that
this is entry-wise positive, orthogonal to the span of the row vectors
(so in the kernel of the operator), and has entries that sum to one,
from the definitions and the above proved theorems. 
\end{proof}

\subsection{Canonical Analytic Functions}

For $p\in\Pi$ we define \[
k_{p}=\sum_{j=0}^{n}\kappa_{j}(p)\mathbb{P}(\cdot,\, p_{j})\,,\]
 where $\kappa$ is as in corollary \ref{cor:PosUnitKernel}. Define
$\tau:\Pi\to\mathbb{R}^{n+1}$ by $\tau(p)=\kappa(p)/k_{p}(b)$. We
then define \[
h_{p}=\sum_{j=0}^{n}\tau_{j}(p)\mathbb{P}(\cdot,\, p_{j})\,.\]

It is clear that this corresponds to the measure \[
\mu=\sum_{j=0}^{n}\tau_{j}(p)\delta_{p_{j}}\]
 on $B$. We can see that $h_{p}$, thus defined, is a positive harmonic
function, with $h_{p}(b)=1$. We can also see that its periods are
zero, as \begin{multline}
P_{j}(h_{p})=\int_{B}Q_{j}d\mu=\int_{B}Q_{j}\sum_{i=0}^{n}\tau_{i}(p)\delta_{p_{i}}=\\
\sum_{i=0}^{n}\tau_{i}(p)\int_{B}Q_{j}\delta_{p_{i}}=\sum\tau_{i}(p)Q_{j}(p_{i})=0\,,\label{eq:NoPeriods}\end{multline}
 as $\tau(p)$ is in the kernel of $M(p)$, and \eqref{eq:NoPeriods}
is just the $j$-th coordinate of $M(p)\tau(p)$. The function $h_{p}$
is therefore the real part of an analytic function $f_{p}$ on $R$.
We require that $f_{p}(b)=1$.

We define $\mathcal{H}(R)$ as the space of holomorphic functions
on $R$, with the compact open topology. This is locally convex, metrisable,
and has the Heine-Borel property, that is, closed bounded subsets
of $\mathcal{H}(R)$ are compact. We then define \[
\mathbb{K}=\left\{ f\in\mathcal{H}(R):\, f(b)=1,\, f+\bar{f}>0\right\} \,.\]

\begin{lem}
The set $\mathbb{K}$ is compact. 
\end{lem}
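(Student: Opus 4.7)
The plan is to exploit the Heine--Borel property of $\mathcal{H}(R)$ that the excerpt has already noted: it suffices to show that $\mathbb{K}$ is both closed and bounded in the compact-open topology. Closedness under compact-open limits preserves the normalisation $f(b)=1$ immediately, and preserves $\operatorname{Re}f\ge 0$. Since the limit function $f$ satisfies $\operatorname{Re}f(b)=1>0$ and $\operatorname{Re}f$ is harmonic on the connected domain $R$, the minimum principle for harmonic functions upgrades $\operatorname{Re}f\ge 0$ to $\operatorname{Re}f>0$ throughout $R$, so $f\in\mathbb{K}$. This part is routine.

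The real content is boundedness, and my plan is to pass through the Cayley transform. For $f\in\mathbb{K}$, set
\[
\phi_f(z)=\frac{f(z)-1}{f(z)+1}\,.
\]
Since $\operatorname{Re}f>0$, $\phi_f$ is a holomorphic self-map of the open unit disc, and since $f(b)=1$, $\phi_f(b)=0$. Thus $\{\phi_f:f\in\mathbb{K}\}$ is uniformly bounded by $1$ on $R$, so by Montel's theorem it is a normal family. The inverse relation $f=(1+\phi_f)/(1-\phi_f)$ shows that blow-ups of $f$ correspond to $\phi_f$ approaching $1$.

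To prove $\mathbb{K}$ is bounded on a given compact $K\subset R$, I argue by contradiction. If there were $f_n\in\mathbb{K}$ and $z_n\in K$ with $|f_n(z_n)|\to\infty$, I would pass to a subsequence so that $z_n\to z_0\in K$ and $\phi_{f_n}\to \phi$ uniformly on compacts for some holomorphic $\phi$ on $R$ with $|\phi|\le 1$ and $\phi(b)=0$. Uniform convergence on a neighbourhood of $z_0$ combined with $\phi_{f_n}(z_n)\to 1$ forces $\phi(z_0)=1$, so $|\phi|$ attains its maximum at an interior point; the maximum modulus principle then gives $\phi\equiv 1$, contradicting $\phi(b)=0$. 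Hence no such sequence exists and $\sup_{f\in\mathbb{K}}\sup_{z\in K}|f(z)|<\infty$.

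The only subtle step is the boundedness argument, and specifically the use of the Cayley transform to convert a positive-real-part normalisation into a unit-disc-valued normalisation to which Montel and the maximum modulus principle apply directly. Once this is in place, Heine--Borel delivers compactness of $\mathbb{K}$.
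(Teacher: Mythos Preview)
Your argument is correct, and it takes a genuinely different route from the paper's proof. The paper also dismisses closedness as clear and focuses on boundedness, but it handles boundedness by a geometric cut-and-paste: it covers $R$ by an exhausting family of compacts $K_\epsilon$, chooses simple arcs $\upsilon_0,\ldots,\upsilon_n$ from one boundary component to the next that slice $X$ into two simply connected pieces, thickens the arcs slightly so that each piece remains simply connected after adding the thickening, and then applies the Riemann mapping theorem to each piece to reduce to the known bound for the unit disc (quoted from \cite{DritschelRationalDilation}). Your approach avoids all of this machinery by observing that the Cayley transform sends $\mathbb{K}$ into the unit ball of $\mathcal{H}(R)$ with a basepoint normalisation, after which Montel plus the maximum modulus principle apply directly on the multiply connected domain itself. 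Your proof is shorter and more self-contained; the paper's has the mild advantage of producing explicit bounds $M_\epsilon^\pm$ on each $K_\epsilon^\pm$, though these are not used later. Your treatment of closedness is also slightly more careful than the paper's, which simply asserts it.
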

\begin{proof}
$\mathbb{K}$ is clearly closed, so it suffices to show that $\mathbb{K}$
is bounded. The case where $R$ is the unit disc is proved in \cite{DritschelRationalDilation},
and we use this result without proof.

Since the $B_{0},\,\ldots,\, B_{n}$ are disjoint, closed sets, and
$R$ is $T_{4}$, we can find disjoint open sets $U_{0},\,\ldots,\, U_{n}$
containing each. By a simple topological argument we can show that
there exists some $E>0$ such that \[
O_{i}(E):=\left\{ z\in\mathbb{C}:\, d(z,B_{i})<E\right\} \subseteq U_{i}\,.\]
 It is clear that $R$ is covered by the family of connected compact
sets \[
\left\{ K_{\epsilon}\right\} :=\left\{ R\backslash\left(\bigcup_{i}O_{i}(\epsilon)\right):\,0<\epsilon<E\right\} \,,\]
 so it is sufficient to work with just these compact sets.

We choose a sequence of disjoint, simple paths $\upsilon_{0},\,\ldots,\,\upsilon_{n}$
through $X$ such that $\upsilon_{i}$ goes from $B_{i}$ to $B_{i+1}$,
and $\upsilon_{0}$ passes through $b$ (note that when $X$ is a
symmetric domain, $\upsilon_{i}=\mathbb{X}_{i}$ satisfies this).
It is clear that the union of these paths cuts $X$ into two disjoint,
simply connected sets $U$ and $V$. It is also possible to show that
we can choose a $\delta>0$ such that adding \[
W:=\{z\in R:\, d(z,\,\upsilon_{i})\leq\delta\text{ for some }i\}\]
 to either of these sets preserves simple connectivity. We can see
that $K_{\epsilon}^{+}:=K_{\epsilon}\cap(U\cup W)$ and $K_{\epsilon}^{-}:=K_{\epsilon}\cap(V\cup W)$
are simply connected compact sets containing $b$, whose union is
$K_{\epsilon}$. By the Riemann mapping theorem, we can canonically
map $K_{\epsilon}^{\pm}$ to the unit disc, in a way that takes $b$
to zero, so by the result of \cite{DritschelRationalDilation} mentioned
earlier, we have a constant $M_{\epsilon}^{\pm}$, such that $f$
analytic on $R$ with $f(b)=1$ implies for all $z\in K_{\epsilon}^{\pm}$,
$\vert f(z)\vert\leq M_{\epsilon}^{\pm}$. 
\end{proof}
\begin{lem}
The extreme points of $\mathbb{K}$ are precisely $\{f_{p}:\, p\in\Pi\}$. 
\end{lem}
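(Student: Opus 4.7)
The plan is to transfer the problem from $\mathbb{K} \subseteq \mathcal{H}(R)$ to a convex set of positive measures on $B$ via the Poisson representation, and then identify extreme points by combining a standard support-size bound with the one-dimensionality of $\ker M(p)$ proved in Corollary \ref{cor:PosUnitKernel}.

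First I would set up the affine bijection $\Phi: \mathbb{K} \to \mathcal{M}$, where
\[ \mathcal{M} = \Bigl\{\mu \geq 0 \text{ on } B :\, \int_B \mathbb{P}(b,z)\,d\mu(z) = 1,\ P_j(\mu) = 0,\ j = 1,\ldots,n\Bigr\}. \]
For $f \in \mathbb{K}$, the Poisson representation assigns a unique positive measure $\mu$ to $\Re f$; the normalization $f(b)=1$ becomes the first defining equation of $\mathcal{M}$, and the requirement that $\Re f$ be the real part of a holomorphic function becomes the vanishing-period condition (note that $P_0$ is automatic, since $\sum_j Q_j \equiv 0$ on $B$). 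The inverse map reconstructs $f$ as the holomorphic function with real part $\int \mathbb{P}(\cdot,z)\,d\mu$, normalized by $f(b)=1$, which also fixes the additive imaginary constant. A brief check shows this map is an affine homeomorphism (weak-$*$ on $\mathcal{M}$, compact-open on $\mathbb{K}$), so extreme points correspond under $\Phi$.

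Next I would invoke the classical measure-theoretic fact that an extreme point of a convex set of positive measures on a compact space, cut out by $N$ continuous linear equality constraints, is supported on at most $N$ points. Here $N = n+1$, so every extreme $\mu \in \mathcal{M}$ has $|\mathrm{supp}(\mu)| \leq n+1$. By Corollary \ref{cor:NotZeroOnBj}, any $\mu \in \mathcal{M}$ must assign positive mass to every component $B_j$; combining the two facts, an extreme $\mu$ must take the form $\sum_{j=0}^n t_j \delta_{p_j}$ with $t_j > 0$ and $p = (p_0,\ldots,p_n) \in \Pi$. The period conditions then read $M(p)(t_0,\ldots,t_n)^T = 0$, and by Corollary \ref{cor:PosUnitKernel} the coefficient vector must be a positive multiple of $\kappa(p)$; the normalization $\sum_j t_j \mathbb{P}(b,p_j) = 1$ pins down the multiple as $1/k_p(b)$, yielding $(t_0,\ldots,t_n) = \tau(p)$, so $\mu$ is exactly the measure representing $f_p$.

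The reverse inclusion is short: if $\mu_p = \tfrac{1}{2}(\nu_1 + \nu_2)$ with $\nu_i \in \mathcal{M}$, then both $\nu_i$ are supported on $\{p_0,\ldots,p_n\}$, and the one-dimensionality of $\ker M(p)$ together with the normalization forces $\nu_1 = \nu_2 = \mu_p$. The main obstacle I anticipate is the support-size bound itself: although classical, the proof for extreme measures that are \emph{a priori} not finitely supported requires either a perturbation argument via an $L^\infty(\mu)$ Hahn decomposition, or an appeal to a Naimark-type theorem, rather than the easy linear-algebra computation one gets once finite support is already granted.
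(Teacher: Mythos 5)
Your proposal is correct, but it reaches the conclusion by a genuinely different route than the paper. The paper only argues the hard inclusion explicitly (dismissing the fact that each $f_p$ is extreme as clear): given $f$ whose representing measure $\mu$ charges at least two ``pieces'' of some component, it splits that component's mass into two parts and then uses the entrywise positivity of $K^{-1}$, where $K=(k_{jm})=\bigl(\int_{B_m}Q_j\,d\mu\bigr)$ --- established by a careful adjugate/sign computation in the spirit of Lemmas \ref{lem:PosSquare} and \ref{lem:PosCoeffs} --- to redistribute mass on the remaining components so that each half still has vanishing periods. This produces an explicit nontrivial convex decomposition $f=g_1(b)\frac{g_1}{g_1(b)}+g_2(b)\frac{g_2}{g_2(b)}$ and is entirely self-contained, at the cost of the lengthy determinant analysis. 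You instead pass to the moment set $\mathcal{M}$ and invoke the Richter--Rogosinski--Winkler support bound (extreme points of a positive-measure set cut out by $N$ continuous linear constraints live on at most $N$ points), which, combined with Corollary \ref{cor:NotZeroOnBj}, forces exactly one support point per boundary component, after which Corollary \ref{cor:PosUnitKernel} pins down the weights as $\tau(p)$. This bypasses the adjugate computation entirely and also supplies an actual argument for the converse inclusion, but it imports the support-size bound (whose perturbation proof you correctly flag as the real technical content) and it leans on the uniqueness of the representing measure for a positive harmonic function, which you need both for the affine bijection and for the extremality of each $f_p$; that uniqueness is standard for analytic boundary and is tacitly assumed by the paper as well, but it deserves a sentence in a full write-up.
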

\begin{proof}
Clearly, each $f_{p}$ is an extreme point of $\mathbb{K}$, so we
prove the converse -- if $f\neq f_{p}$, then $f$ is not an extreme
point of $\mathbb{K}$.

If $f\in\mathbb{K}$, then the real part of $f$ is a positive harmonic
function $h$ with $h(b)=1$. We therefore know that there is some
positive measure $\mu$ on $B$ such that \[
h(w)=\int_{B}\mathbb{P}(w,\, z)d\mu(z)\,.\]
 As $f$ is holomorphic, by Corollary \vref{cor:NotZeroOnBj}, $\mu$
must support at least one point on each $B_{i}$. If $f\neq f_{p}$,
then $\mu$ must support more than one point on some $B_{i}$.

Now, a note. We know $f$ is holomorphic if $P_{j}(h)=0$ for $j=0,\,\ldots,\, n$.
However, we know that $\sum_{j=0}^{n}Q_{j}=0$, so $\sum_{j=0}^{n}P_{j}(h)=0$,
so if we show that all but one of the $P_{j}(h)$ are zero, we have
shown that they are all zero, so $f$ is holomorphic.

With that in mind, suppose that $\mu$ supports more than one point
on $B_{0}$. We do not lose any generality by doing this, as relabelling
the boundary curves does not matter in the proof below, so we can
safely relabel any given boundary curve $B_{0}$. We divide $B_{0}$
into two parts, $A_{1}$ and $A_{2}$, in such a way that $\mu$ is
non-zero on both.

Now, let \[
a_{jl}=\int_{A_{l}}Q_{j}d\mu\,,\qquad l=1,\,2\,,\]
 and \[
k_{jm}=\int_{B_{m}}Q_{j}d\mu\,,\qquad m=1,\,\ldots,\, n\,,\]
 Since $h$ is the real part of an analytic function, \[
0=\int_{B}Q_{j}d\mu\,,\]
 so \[
\sum_{m=1}^{n}k_{jm}+a_{j1}+a_{j2}=0\,.\]
 Since $Q_{j}<0$ on $B_{i}$ for $i\neq j$, for any $M\subseteq\{1,\,\ldots,\, n\}$
containing $j$, \[
\sum_{m\in M}k_{jm}=-\left(a_{j1}+a_{j2}+\sum_{m\notin M}k_{jm}\right)>0\,.\]

We can now apply the Gershgorin circles trick from the proof of Lemma
\vref{lem:PosSquare}, to see that all sub-matrices of $K:=(k_{jm})$
of the form \[
\left(\begin{array}{ccccc}
+ & - & - & \cdots & -\\
- & + & - & \cdots & -\\
- & - & + & \cdots & -\\
\vdots & \vdots & \vdots & \ddots & \vdots\\
- & - & - & \cdots & +\end{array}\right)\]
 have positive determinant (including $K$, which must therefore be
invertible). We also note that the proof of Lemma \vref{lem:PosCoeffs}
only used this fact and the signs of the elements of matrices.

We consider the adjugate matrix $C$ of $K$, which is defined by
\[
c_{jm}=(-1)^{j+m}\left|\left(k_{\alpha\beta}\right)_{\substack{\alpha\neq j\\
\beta\neq m}
}\right|\]
 and has the property that $\det(K)^{-1}C^{T}=K^{-1}$. If we can
show that all the $c_{jm}$ are positive, then we will have that all
the entries of $K^{-1}$ are positive.

Now, if $j=m$, then \[
c_{jm}=\cancel{(-1)^{j+j}}\left|\begin{array}{ccccc}
+ & - & - & \cdots & -\\
- & + & - & \cdots & -\\
- & - & + & \cdots & -\\
\vdots & \vdots & \vdots & \ddots & \vdots\\
- & - & - & \cdots & +\end{array}\right|=(+)\,.\]
 If $m>j$ then $c_{jm}$ is given by \begin{equation}
(-1)^{j+m}\left|\begin{array}{c|c|c}
\begin{matrix}+ & - & \cdots & -\\
- & + & \cdots & -\\
\vdots & \vdots & \ddots & \vdots\\
- & - & \cdots & +\end{matrix} & - & -\\
\hline - & \overbrace{\begin{matrix}- & + & - & \cdots & -\\
- & - & + & \ddots & \vdots\\
- & - & - & \ddots & -\\
\vdots & \vdots & \ddots & \ddots & +\\
- & \cdots & - & - & -\end{matrix}}^{(m-j)\times(m-j)\text{ block}} & -\\
\hline - & - & \begin{matrix}+ & - & \cdots & -\\
- & + & \cdots & -\\
\vdots & \vdots & \ddots & \vdots\\
- & - & \cdots & +\end{matrix}\end{array}\right|\,.\label{eq:trans}\end{equation}
 By cyclically permuting the $m-j$ rows in the middle we get\[
(-1)^{\cancel{j-m}-1}\cancel{(-1)^{j+m}}\left|\begin{array}{cc|c|c}
 &  & {\scriptstyle \text{col }j}\\
 & \begin{array}{cccc}
+ & - & \cdots & -\\
- & + & \cdots & -\\
\vdots & \vdots & \ddots & \vdots\\
- & - & \cdots & +\end{array} & - & -\\
\hline {\scriptstyle \text{row }j} & - & - & -\\
\hline  & - & - & \begin{array}{cccc}
+ & - & \cdots & -\\
- & + & \cdots & -\\
\vdots & \vdots & \ddots & \vdots\\
- & - & \cdots & +\end{array}\end{array}\right|\,,\]
 and by cyclically permuting the first $j$ rows, and the first $j$
columns we get \[
(-1)\cancel{(-1)^{j-1}}\cancel{(-1)^{j-1}}d_{1}^{n-1}\,,\]
 which we note is precisely the $e_{1}$ term of $V^{n-1}$ in Lemma
\vref{lem:PosCoeffs}, which is positive.

If $j>m$, then $c_{jm}$ is given by\begin{equation}
(-1)^{j+m}\left|\begin{array}{c|c|c}
\begin{matrix}+ & - & \cdots & -\\
- & + & \cdots & -\\
\vdots & \vdots & \ddots & \vdots\\
- & - & \cdots & +\end{matrix} & - & -\\
\hline - & \overbrace{\begin{matrix}- & - & - & \cdots & -\\
+ & - & - & \ddots & \vdots\\
- & + & - & \ddots & -\\
\vdots & \vdots & \ddots & \ddots & -\\
- & \cdots & - & + & -\end{matrix}}^{(j-m)\times(j-m)\text{ block}} & -\\
\hline - & - & \begin{matrix}+ & - & \cdots & -\\
- & + & \cdots & -\\
\vdots & \vdots & \ddots & \vdots\\
- & - & \cdots & +\end{matrix}\end{array}\right|\,.\label{eq:untrans}\end{equation}
 But note that transposing matrices preserves determinant, and the
transpose of the matrix in \eqref{eq:untrans} is the matrix in \eqref{eq:trans},
so $c_{jm}=c_{mj}$, which we already know is positive. Therefore,
$K^{-1}$ has all positive entries. Since \[
\left(\begin{array}{c}
-a_{1l}\\
\vdots\\
-a_{nl}\end{array}\right)\]
 has all positive entries, we define \[
\left(\begin{array}{c}
b_{1l}\\
\vdots\\
b_{nl}\end{array}\right):=K^{-1}\left(\begin{array}{c}
-a_{1l}\\
\vdots\\
-a_{nl}\end{array}\right)\,.\]

Define positive measures $\nu_{1}$, $\nu_{2}$ by \[
\nu_{l}(A)=\mu(A\cap A_{l})+\sum_{m=1}^{n}b_{ml}\mu(A\cap B_{m})\,.\]
 Then \[
\int_{B}Q_{j}d\nu_{l}=a_{jl}+\sum_{m=1}^{n}k_{jm}b_{ml}=0\,,\]
 so each \[
h_{l}=\int_{B}\mathbb{P}(\cdot,\, w)d\nu_{l}(w)\,,\qquad l=1,\,2\,\,,\]
 is the real part of an analytic function $g_{l}$ with $\Im g_{l}(b)=0$.
We can see that $\nu_{1}+\nu_{2}=\mu$ as \[
K\left(\begin{array}{c}
1\\
\vdots\\
1\end{array}\right)=\left(\begin{array}{c}
\sum_{m=1}^{n}\int_{B_{m}}Q_{1}d\mu\\
\vdots\\
\sum_{m=1}^{n}\int_{B_{m}}Q_{n}d\mu\end{array}\right)=\left(\begin{array}{c}
\cancel{P_{1}(h)}-a_{11}-a_{12}\\
\vdots\\
\cancel{P_{n}(h)}-a_{n1}-a_{n2}\end{array}\right)\,.\]
 Multiplying both sides by $K^{-1}$ gives $b_{m1}+b_{m2}=1$. We
therefore have $h_{1}+h_{2}=h$. Thus, $g_{l}/g_{l}(b)\in\mathbb{K}$
and \[
f=g_{1}(b)\left(\frac{g_{1}}{g_{1}(b)}\right)+g_{2}(b)\left(\frac{g_{2}}{g_{2}(b)}\right)\,,\]
 so $f$ is a convex combination of two other points in $\mathbb{K}$.
Hence, $f$ is not an extreme point. 
\end{proof}
\begin{lem}
The set $\widehat{\mathbb{K}}$ of extreme points of $\mathbb{K}$
is a closed set, and the function taking $\Pi$ to $\mathbb{K}$ by
$p\mapsto f_{p}$ is a homeomorphism onto $\widehat{\mathbb{K}}$. 
\end{lem}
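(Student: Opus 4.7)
The plan is to show three things: first, that $p \mapsto f_p$ is continuous from $\Pi$ into $\mathcal{H}(R)$; second, that it is injective; and third, use compactness of $\Pi$ to promote this into a homeomorphism onto its image and conclude that $\widehat{\mathbb{K}}$ is closed.

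Continuity of $p \mapsto f_p$ is the key step. Starting from Corollary \ref{cor:PosUnitKernel}, the map $\kappa$ is continuous on $\Pi$. The denominator $k_p(b) = \sum_j \kappa_j(p)\,\mathbb{P}(b,p_j)$ is continuous and strictly positive, so $\tau(p) = \kappa(p)/k_p(b)$ is continuous as well. The Poisson kernel $\mathbb{P}(w,\zeta)$ is smooth (real-analytic) in $w$ for $\zeta \in B$, and jointly continuous in $(w,\zeta)$ together with its first partial derivatives in $w$ on compact subsets of $R \times B$. Hence both $h_p(w) = \sum_j \tau_j(p)\,\mathbb{P}(w,p_j)$ and its partial derivatives depend continuously on $p$, uniformly for $w$ in compact subsets of $R$. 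By the Cauchy--Riemann equations, $f_p'(z) = \partial_x h_p(z) - i\partial_y h_p(z)$, so $p \mapsto f_p'$ is continuous into $\mathcal{H}(R)$. Since $h_p$ has no periods, $f_p'$ has a well-defined single-valued primitive on $R$, and with the normalisation $f_p(b)=1$ we recover $f_p(z) = 1 + \int_\gamma f_p'\,d\zeta$ along any path $\gamma$ from $b$ to $z$. Choosing such paths uniformly over $z$ in any compact $K \subset R$ gives continuity of $p \mapsto f_p$ in the compact-open topology.

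For injectivity, if $p \neq q$ then $p_i \neq q_i$ for some $i$. Because $\tau_i(p) > 0$ and the kernel $\mathbb{P}(\cdot,p_i)$ blows up at $p_i$ while $\mathbb{P}(\cdot,\zeta)$ stays bounded near $p_i$ for $\zeta \neq p_i$, the function $h_p$ is unbounded as $w \to p_i$. On the other hand, $q$ has no support point on $B_i$ other than $q_i \neq p_i$, so $h_q$ is bounded near $p_i$. Hence $h_p \neq h_q$, and therefore $f_p \neq f_q$.

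Putting it together: $\Pi = B_0 \times \cdots \times B_n$ is compact, $\mathcal{H}(R)$ is Hausdorff, and by the preceding lemma the map $p \mapsto f_p$ is a continuous bijection from $\Pi$ onto $\widehat{\mathbb{K}}$. Any continuous bijection from a compact space to a Hausdorff space is a homeomorphism, and the image of a compact set under such a map is compact, hence closed in $\mathbb{K}$. I expect the main obstacle to be tracking the continuity of the harmonic-conjugate step carefully enough to conclude continuity of $p \mapsto f_p$ itself (rather than merely $p \mapsto h_p$) in the compact-open topology; routing this through the derivative $f_p'$ and integration along paths, as above, sidesteps any subtle normalisation issues that would arise from trying to work with the conjugate function directly.
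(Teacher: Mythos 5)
Your proposal is correct and is essentially the argument the paper intends: the paper simply defers to Lemma 2.11 of Dritschel--McCullough, whose proof is exactly this combination of (i) continuity of $p\mapsto f_p$ via continuity of $\tau$ and of the Poisson kernel together with the harmonic-conjugate/primitive step, (ii) injectivity from the distinct singular supports of the representing measures, and (iii) the compact-to-Hausdorff continuous bijection argument. No gaps.
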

\begin{proof}
The proof is exactly as that of Lemma 2.11 in \cite{DritschelRationalDilation}. 
\end{proof}

\subsection{\label{sec:Test-Functions}Test Functions}

For $p\in\Pi$, define \[
\psi_{p}=\frac{f_{p}-1}{f_{p}+1}\,.\]
 The real part, $h_{p}$, of $f_{p}$ is harmonic across $B\backslash\{p_{0},\,\ldots,\, p_{n}\}$,
therefore $f_{p}$ is analytic across $B\backslash\{p_{0},\,\ldots,\, p_{n}\}$.
Also, $f_{p}$ looks locally like $g_{j}/(z-p_{j})$ at $p_{j}$,
for some analytic $g_{j}$, non-vanishing at $p_{j}$ (by \cite[Ch. 4, Prop. 6.4]{Fisher}).
We can see from this that $\psi_{p}$ is continuous onto $B$ and
$\left|\psi_{p}\right|=1$ on $B$.

By the reflection principle, $\psi_{p}$ is inner and extends analytically
across $B$, and $\psi_{p}^{-1}\{1\}=\{p_{0},\,\ldots,\, p_{n}\}$,
so the preimage of each point $z\in\mathbb{D}$ is exactly $n+1$
points, up to multiplicity, and so $\psi_{p}$ has $n+1$ zeroes.

Similarly, if $\psi$ is analytic in a neighbourhood of $R$, with
modulus one on $B$ and $n+1$ zeroes in $R$, then $\psi^{-1}\{1\}$
has $n+1$ points. Also, the real part of \[
f=\frac{1+\psi}{1-\psi}\]
 is a positive harmonic function which is zero on $B$ except where
$\psi(z)=1$. By Corollary \vref{cor:NotZeroOnBj}, $f$ cannot be
identically zero on any $B_{i}$, so there must be one point from
$\psi^{-1}\{1\}$ on each $B_{i}$. If, further, $\psi(b)=0$, then
$\psi=\psi_{p}$ for some $p\in\Pi$.

We define $\Theta=\left\{ \psi_{p}:\, p\in\Pi\right\} $.

\begin{thm}
\label{thm:Big}If $\rho$ is analytic in $R$ and if $\left|\rho\right|\leq1$
on $R$, then there exists a positive measure $\mu$ on $\Pi$ and
a measurable function $h$ defined on $\Pi$ whose values are functions
$h(\cdot,\, p)$ analytic in $R$ so that \[
1-\rho(z)\overline{\rho(w)}=\int_{\Pi}h(z,\, p)\left[1-\psi_{p}(z)\overline{\psi_{p}(w)}\right]\overline{h(w,\, p)}d\mu(p)\,.\]

\end{thm}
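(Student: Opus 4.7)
My plan is to derive the theorem as a near-immediate consequence of Choquet's theorem applied to $\mathbb{K}$, converted into the required kernel statement via the Cayley transform. The key ingredients -- compactness and metrisability of $\mathbb{K}$, and the identification $\widehat{\mathbb{K}}\cong\Pi$ with $p\mapsto f_p$ a homeomorphism -- have already been established in the preceding lemmas, so the remaining work is essentially a short computation.

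First I would reduce to the case where $|\rho|<1$ strictly on $R$: if $|\rho(z_0)|=1$ for some $z_0\in R$, then by maximum modulus $\rho$ is a unimodular constant, in which case $1-\rho\bar\rho\equiv 0$ and the theorem is trivial. In the nontrivial case, the Herglotz transform $f_0=(1+\rho)/(1-\rho)$ is analytic on $R$ with strictly positive real part, and normalising by
\[ f = \frac{f_0 - i\,\mathrm{Im}\,f_0(b)}{\mathrm{Re}\,f_0(b)} \]
produces an element of $\mathbb{K}$. Choquet's theorem, valid because $\mathbb{K}$ is a compact convex subset of the metrisable space $\mathcal{H}(R)$ with extreme set parameterised by the compact space $\Pi$, then yields a Borel probability measure $\nu$ on $\Pi$ with $f=\int_\Pi f_p\,d\nu(p)$.

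Next I would apply the elementary Cayley identity
\[ g(z) + \overline{g(w)} = \frac{2\bigl(1 - \sigma(z)\overline{\sigma(w)}\bigr)}{(1-\sigma(z))\bigl(1-\overline{\sigma(w)}\bigr)}, \]
valid for any Schur function $\sigma$ with Herglotz transform $g=(1+\sigma)/(1-\sigma)$, to both the pair $(\rho,f_0)$ and to each pair $(\psi_p,f_p)$. Substituting the Choquet representation of $f$ into the resulting identity for $f_0(z)+\overline{f_0(w)}$ and rearranging yields
\[ 1 - \rho(z)\overline{\rho(w)} = \int_\Pi h(z,p)\bigl[1 - \psi_p(z)\overline{\psi_p(w)}\bigr]\overline{h(w,p)}\,d\mu(p) \]
with $h(z,p)=(1-\rho(z))/(1-\psi_p(z))$ and $d\mu(p)=\mathrm{Re}\,f_0(b)\,d\nu(p)$. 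Analyticity of $h(\cdot,p)$ on $R$ follows because $|\psi_p|<1$ on $R$ (again by maximum modulus, since $\psi_p$ has interior zeroes), and measurability of $h(z,\cdot)$ follows from the continuity of $p\mapsto\psi_p$.

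I do not anticipate a serious technical obstacle: the heavy lifting lies entirely in the extreme-point analysis already completed, and both Choquet's theorem and the Cayley identity are standard. The only care required is in justifying the interchange of the Choquet integral with the Cayley manipulations, which reduces to pointwise convergence of the integral on $R$ and is automatic given local-uniform convergence in $\mathcal{H}(R)$. The scalarity of $\rho$ is essential at the Cayley step, since the analogous identity fails for matrix-valued $\rho$ -- and it is precisely this failure that the rest of the paper leverages to construct its counterexamples.
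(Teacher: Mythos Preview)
Your proposal is correct and follows essentially the same route as the paper: Cayley-transform $\rho$ into $\mathbb{K}$, represent the result as an integral over the extreme points $\{f_p:p\in\Pi\}$, and invert the Cayley identity term-by-term to recover the kernel decomposition with $h(z,p)=(1-\rho(z))/(1-\psi_p(z))$. The only cosmetic differences are that the paper pre-normalises $\rho$ via a M\"obius map so that $\rho(b)=0$ (rather than affinely rescaling $f_0$ as you do), and invokes Krein--Milman where you cite Choquet---your naming is the more accurate one, since what is actually used is the barycentric representation over the closed extreme set, which holds here because $\mathbb{K}$ is metrisable and $\widehat{\mathbb{K}}$ is compact.
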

\begin{proof}
First suppose $\rho(b)=0$.

Let \[
f=\frac{1+\rho}{1-\rho}\]
 so \[
\rho=\frac{f-1}{f+1}\,.\]
 Hence \begin{equation}
1-\rho(z)\overline{\rho(w)}=2\frac{f(z)+\overline{f(w)}}{\left(f(z)+1\right)\left(\overline{f(w)}+1\right)}\label{eq:1-RhoRhoBar}\end{equation}

Since $h$, the real part of $f$, is positive and $f(b)=1$, the
function $f$ is in $\mathbb{K}$. Since $\mathbb{K}$ is a compact
convex subset of the locally convex topological vector space $\mathcal{H}(R)$,
by the Krein-Milman theorem, $f$ is in the closed convex hull of
$\widehat{\mathbb{K}}=\{f_{p}:\, p\in\Pi\}$, the set of extreme points
of $\mathbb{K}$. Therefore, there exists some regular Borel probability
measure $\nu$ on $\Pi$ such that \[
f=\int_{\Pi}f_{p}d\nu(p)\,.\]

Using the definition of $\psi_{p}$ and \eqref{eq:1-RhoRhoBar}, we
can show that \[
1-\rho(z)\overline{\rho(w)}=\int_{\Pi}\frac{1-\psi_{p}(z)\overline{\psi_{p}(w)}}{\left(f(z)+1\right)\left(1-\psi_{p}(z)\right)\left(1-\overline{\psi_{p}(w)}\right)\left(\overline{f(w)}+1\right)}d\nu(p)\,.\]

Finally, if $\rho(b)=a$, then we have a representation like the one
above, as \[
1-\left(\frac{\rho(z)-a}{1-\bar{a}\rho(z)}\right)\overline{\left(\frac{\rho(w)-a}{1-\bar{a}\rho(w)}\right)}=\frac{\left(1-a\bar{a}\right)\left(1-\rho(z)\overline{\rho(w)}\right)}{\left(1-\bar{a}\rho(z)\right)\left(1-a\overline{\rho(w)}\right)}\,.\]

\end{proof}
The interested reader may note that the set $\Theta$ is a collection
of test functions for $H^{\infty}(R)$, as defined in \cite{DritschelInterpolation}.

\begin{note}
\label{not:Minimal}We have used $n+1$ parameters to describe the
inner functions in $\Theta$, however, we only need $n$, as we can
identify them with the inner functions with $n+1$ zeroes, by the
argument in the introduction to Section \vref{sec:Test-Functions}.
If we then fix some $\tilde{p_{0}}\in B_{0}$, it is then clear that
for all $p\in\Pi$, $\overline{\psi_{p}(\tilde{p_{0}})}\psi_{p}$
is an inner function with $n+1$ zeroes, with one of them at $b$,
and $\overline{\psi_{p}(\tilde{p_{0}})}\psi_{p}(\tilde{p_{0}})=1$,
so $\overline{\psi_{p}(\tilde{p_{0}})}\psi_{p}=\psi_{q}$, where $q=(\tilde{p_{0}},\, q_{1},\,\ldots,\, q_{n})$,
for some $q_{1}\in B_{1},\,\ldots,\, q_{n}\in B_{n}$. We define \[
\widetilde{\Theta}:=\left\{ \psi_{q}:\, q=(\tilde{p_{0}},\, q_{1},\,\ldots,\, q_{n}),\, q_{1}\in B_{1},\,\ldots,\, q_{n}\in B_{n}\right\} \,,\]
 which is also a set of test functions for $H^{\infty}(R)$. 
\end{note}

\section{Matrix Inner Functions}

\subsection{Preliminaries}

\begin{thm}
\label{thm:OffX}If $R$ is symmetric, then there is some $b\in\mathbb{X}$,
and some $\psi_{\mathbf{p}}\in\widetilde{\Theta}$ with $n+1$ distinct
zeroes $b,\, z_{1,\,}\ldots,\, z_{n}$, where $z_{1},\,\ldots,\, z_{n}\notin\mathbb{X}$,
and $z_{i}\neq\varpi(z_{j})$ for all $i,\, j$. 
\end{thm}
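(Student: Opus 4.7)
The plan is a genericity argument on the parameter space for $\widetilde{\Theta}$. Pick any $b\in\mathbb{X}$; this set is nonempty, being the $\varsigma$-image of $\mathbb{R}\cap\Xi$. By construction, $\psi_p(b)=0$ for every $\psi_p\in\widetilde{\Theta}$, so $b$ is automatically one of the $n+1$ zeroes of every such $\psi_p$. Choose $\tilde{p}_0\in B_0$ to be one of the two $\varpi$-fixed points on $B_0$, and parametrise $\widetilde{\Theta}$ by $p=(\tilde{p}_0,q_1,\ldots,q_n)$ ranging over $\mathcal{P}:=B_1\times\cdots\times B_n$, a real $n$-torus. Write the remaining $n$ zeroes of $\psi_p$ as an unordered tuple $Z(p)\in\mathrm{Sym}^n R$; via the Poisson representation of $f_p$ with the weights $\tau_j(p)$, $Z(p)$ depends real-analytically on $p$ away from multiplicities. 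Define the ``bad'' set $\mathcal{B}\subseteq\mathcal{P}$ as the union of the closed conditions (a) two of the zeroes of $\psi_p$ coincide, and (b) $z_i(p)=\varpi(z_j(p))$ for some $i,j$ (the case $i=j$ captures $z_i\in\mathbb{X}$). It will suffice to exhibit a single $p\in\mathcal{P}\setminus\mathcal{B}$.

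To produce such a $p$, I would begin from the fully symmetric basepoint $p^*\in\mathcal{P}$ whose coordinates $p_i^*$ are $\varpi$-fixed points on the respective $B_i$ (selected from the $2n+2$ hyperelliptic fixed points). With $b\in\mathbb{X}$, $\tilde{p}_0$ $\varpi$-fixed, and each $p_i^*$ $\varpi$-fixed, the function $\overline{\psi_{p^*}\circ\varpi}$ is inner on $R$, peaks on the same $(n+1)$-tuple, and vanishes at $b$; uniqueness of such an inner function forces $\overline{\psi_{p^*}\circ\varpi}=\psi_{p^*}$, so $Z(p^*)$ is $\varpi$-invariant. Now perturb each $p_j$ independently along $B_j$. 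Since $B_1,\ldots,B_n$ are mutually disjoint, the $n$ infinitesimal variations act in a decoupled fashion on $\psi_p$ through the Poisson kernels $\mathbb{P}(\cdot,p_j)$ appearing in the representation of $f_p$. A computation using the matrix $M(p)$ of Corollary \ref{cor:PosUnitKernel} and the non-vanishing of the $Q_j$ on $B$ would then show that the differential $dZ$ at $p^*$ is injective into $T_{Z(p^*)}\mathrm{Sym}^n R$, with image transverse to the tangent cone of the $\varpi$-invariant configurations. Consequently $\mathcal{B}$ is a proper real-analytic subvariety of $\mathcal{P}$, its complement is open and dense, and any $p$ in it yields the desired $\psi_p$.

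The principal obstacle is precisely this transversality claim: that perturbing $p_j$ along $B_j$ shifts the associated zero of $\psi_{p^*}$ in a direction with non-vanishing normal component to $\mathbb{X}$. The strategy is localisation---the dominant first-order change in $f_p$ under such a perturbation comes from varying $\tau_j(p)$ and the support of $\mathbb{P}(\cdot,p_j)$, both of which are concentrated near $B_j$ and couple to the zero of $\psi_{p^*}$ closest to $B_j$. Because the perturbation moves $p_j$ along the boundary curve $B_j$ rather than transverse to it, and $B_j$ is not tangent to $\mathbb{X}$ at $p_j^*$, this displacement has nontrivial component normal to $\mathbb{X}$. Once this infinitesimal statement is established, everything else is a routine real-analytic genericity conclusion.
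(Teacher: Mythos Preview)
Your approach is genuinely different from the paper's, and the difference is instructive. The paper does \emph{not} fix $b$ first and then search over $\mathcal{P}$; instead it runs the construction backwards. It picks any temporary base point, chooses $p_1\in B_1\setminus\mathbb{X},\ldots,p_n\in B_n\setminus\mathbb{X}$, and considers the resulting inner function $\varphi_p$. The preimage $\varphi_p^{-1}\{1\}=\{p_0^-,p_1,\ldots,p_n\}$ has $n$ of its $n+1$ points off $\mathbb{X}$ by construction, so one can choose small disjoint neighbourhoods $U_0,\ldots,U_n$ with $U_1,\ldots,U_n$ disjoint from $\mathbb{X}$ and from each $\varpi(U_i)$. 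For $y$ close to $1$ on the image of the path $\mathbb{X}_0$, the preimage $\varphi_p^{-1}\{y\}$ consists of $n+1$ distinct points, one in each $U_i'$; exactly one lies on $\mathbb{X}$, and \emph{that} point is then declared to be $b$. A M\"obius post-composition sends $y$ to $0$ and produces the required $\psi_{\mathbf{p}}$. No differential or transversality computation is needed---only the openness of $\varphi_p$ and the fact that the $p_i$ were chosen off $\mathbb{X}$ to begin with.

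Your proposal, by contrast, fixes $b$ at the outset and attempts a genericity argument on $\mathcal{P}$. The honest identification of the transversality step as ``the principal obstacle'' is accurate, and as written it is a genuine gap. The heuristic you offer---that moving $p_j$ tangentially along $B_j$, with $B_j$ meeting $\mathbb{X}$ transversally at $p_j^*$, forces the corresponding interior zero to move off $\mathbb{X}$---conflates boundary geometry with the motion of interior zeroes, which are linked to $p_j$ only implicitly through the Poisson representation of $f_p$ and the kernel vector $\tau(p)$. There is no direct reason the normal component survives that implicit relation. Moreover, at your symmetric basepoint $p^*$ the $\varpi$-invariance of $Z(p^*)$ does not guarantee simple zeroes, so $Z$ may fail to be smooth there and the differential $dZ$ may not even be defined in the sense you need. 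Finally, the paper's later argument (Remark~\ref{rem:DistinctZeros}) uses that $b$ can be taken close to $p_0^-$; your scheme, having fixed $b$ arbitrarily, would need a separate argument to recover this.
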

\begin{proof}
For now, choose a $b_{0}\in R$, and use this as our $b$. We will
find a better choice for $b$ later in the proof. Take $p_{0}^{-}$
as $\tilde{p_{0}}$, and use this to define $\widetilde{\Theta}$
as in Note \vref{not:Minimal}. We will give this $\widetilde{\Theta}$
an unusual name, $\widetilde{\Theta}_{0}$, and call the functions
in it $\varphi_{p}$, rather than $\psi_{p}$. This is to distinguish
it from the $\widetilde{\Theta}$ and $\psi_{\mathbf{p}}$ in the
statement of the theorem, which we will construct later.

Choose some $p_{1}\in B_{1}\backslash\mathbb{X},\,\ldots,\, p_{n}\in B_{n}\backslash\mathbb{X}$.
Consider the path $\upsilon$ along $\mathbb{X}$ from $B_{1}$ to
$B_{0}$. Its image under $\varphi_{p}$ is a path leading to $1$.
We can see that $\varphi_{p}^{-1}\{1\}$ has $n+1$ points. As $X$
is Hausdorff and locally connected, there are disjoint, connected
open sets $U_{0},\, U_{1},\,\ldots,\, U_{n}$ around each of these
points, and since $\varphi_{p}$ is an open mapping on each of these
open sets, \[
\mathcal{N}:=\bigcap_{i=0}^{n}\varphi_{p}(U_{i})\]
 is a (relatively) open neighbourhood of $1$, whose preimage is $n+1$
disjoint open sets, $U_{0}^{\prime},\,\ldots,\, U_{n}^{\prime}$.
Also, we can choose $U_{1},\,\ldots,\, U_{n}$ such that none of them
intersects $\mathbb{X}$, and none of them intersects any $\varpi(U_{i})$
(since $p_{1},\,\ldots,\, p_{n}\notin\mathbb{X}$, and $\mathbb{X}$
closed). Now, we can lift $\varphi_{p}(\upsilon)\cap\mathcal{N}$
to each of these $U_{i}^{\prime}$, we choose a point $y\in\varphi_{p}(\upsilon)\cap\mathcal{N}$,
and note that $\varphi_{p}^{-1}\{y\}$ has exactly $n+1$ distinct
points, none of which maps to another under $\varpi$, and exactly
one of which is on $\mathbb{X}$. The point on $\mathbb{X}$, we use
as our $b$ for the rest of the proof. We take a Möbius transform
$m$ which preserves the unit circle, and maps $y$ to $0$, and notice
that $m\circ\varphi_{p}$ is an inner function which has $n+1$ zeroes,
exactly one of which, $b$, is on $\mathbb{X}$. If we define $\widetilde{\Theta}$
using our new $b$, and $\tilde{p}_{0}=p_{0}^{-}$, then $\overline{m\circ\varphi_{p}(p_{0}^{-})}m\circ\varphi_{p}\in\widetilde{\Theta}$,
and has the required zeroes, and so is our $\psi_{\mathbf{p}}$. 
\end{proof}
\begin{rem}
\label{rem:DistinctZeros}Note that in the above argument, we can
choose our $b$ as close to $p_{0}^{-}$ as we like, so in particular,
we can choose $b$ such that $h_{0}(b)>1/2$. By an argument similar
to that in \cite[Prop. 2.13]{DritschelRationalDilation}, we can see
that no $\psi_{p}\in\widetilde{\Theta}$ has all its zeroes at $b$. 
\end{rem}
\begin{thm}
If $R$ is symmetric, then $Q_{j}(p_{i})=\eta(p_{i})\, Q_{j}\left(\varpi(p_{i})\right)$,
for some $\eta:B\to\mathbb{C}$ which does not depend on $j$. 
\end{thm}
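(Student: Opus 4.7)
The plan rests on the observation that each function $h_j$ is invariant under $\varpi$, so the relation between $Q_j(p_i)$ and $Q_j(\varpi(p_i))$ is purely a statement about how the outward normal derivative transforms under the anticonformal involution. Since $\varpi$ is a continuous involution, it permutes the components $B_0,\ldots,B_n$. An involution restricted to a simple closed curve has either $0$ or $2$ fixed points (or is the identity, which can be ruled out as $\varpi$ is not the identity on any open set), so with $n+1$ boundary components and exactly $2n+2$ fixed points on $B$, each $B_i$ must be mapped to itself with precisely two fixed points. I would first record this preliminary fact.

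Next, because $\varpi$ is anticonformal, $h_j \circ \varpi$ is harmonic on $R$; since $\varpi(B_j) = B_j$ and $\varpi(B_i) = B_i$ for $i \neq j$, this composition has boundary values $1$ on $B_j$ and $0$ on the remaining components. Uniqueness of the Dirichlet problem then gives $h_j \circ \varpi = h_j$. Differentiating this identity in the outward normal direction at $p \in B$ is the main computation: if $n_p$ denotes the outward unit normal at $p$ and $\gamma(t) = p + t n_p$, then
\[
Q_j(p) = \frac{d}{dt}\bigg|_{t=0^+} h_j(\gamma(t)) = \frac{d}{dt}\bigg|_{t=0^+} h_j(\varpi(\gamma(t))).
\]
The differential $d\varpi_p$ of an anticonformal map scales lengths by $|\varpi'(p)|$ (in the sense of the real Jacobian of an antiholomorphic map) and sends $n_p$ to a vector of length $|\varpi'(p)|$ normal to $B$ at $\varpi(p)$.

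The one point that needs justification is that $d\varpi_p(n_p)$ points outward at $\varpi(p)$ rather than inward. This is where the hypothesis that $\varpi: R \to R$ is crucial: since $\varpi$ maps the interior of $R$ to itself and extends continuously to the boundary, a short arc $\gamma(-t)$ lying in $R$ for $t > 0$ is carried into $R$, which forces $d\varpi_p(-n_p)$ to point into $R$ at $\varpi(p)$, hence $d\varpi_p(n_p) = |\varpi'(p)|\, n_{\varpi(p)}$. Substituting into the chain-rule computation yields
\[
Q_j(p) = |\varpi'(p)|\, Q_j(\varpi(p)),
\]
so defining $\eta(p) := |\varpi'(p)|$ gives a function on $B$ independent of $j$, completing the proof. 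The main technical obstacle is this orientation check for the anticonformal differential; the rest is routine once the $\varpi$-invariance of $h_j$ is in hand.
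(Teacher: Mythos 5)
Your proposal is correct and follows essentially the same route as the paper: establish $h_{j}\circ\varpi=h_{j}$ from harmonicity under anticonformal composition plus uniqueness for the Dirichlet problem, then apply the chain rule at the boundary, with $\eta$ emerging as the $j$-independent normal scaling factor of $d\varpi$. Your added checks (that $\varpi$ maps each $B_{i}$ to itself, via the fixed-point count, and that the image of the outward normal is again outward) are details the paper leaves implicit, and they are handled correctly.
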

\begin{proof}
We write $Q_{j}$ as \[
Q_{j}(p)=\frac{\partial h_{j}}{\partial n_{p}}(p)\]
 where $\partial/\partial n_{p}$ is the normal derivative at $p$.
We also define $\partial/\partial t_{p}$ as the tangent derivative
at $p$.

Now, note that if $h$ is harmonic and $\varpi$ is anticonformal,
then $h\circ\varpi$ is also harmonic, and since $h_{j}$ and $h_{j}\circ\varpi$
have the same values on $B$, they must be equal, so \[
\frac{\partial h_{j}(p_{i})}{\partial n_{p_{i}}}=\frac{\partial h_{j}\left(\varpi(p_{i})\right)}{\partial n_{p_{i}}}\,,\]
 and so \begin{align*}
Q_{j}(p_{i})= & \frac{\partial h_{j}(p_{i})}{\partial n_{p_{i}}}\\
= & \frac{\partial h_{j}\left(\varpi(p_{i})\right)}{\partial n_{\varpi(p_{i})}}\cdot\frac{\partial n_{\varpi(p_{i})}}{\partial n_{p_{i}}}+\cancel{\frac{\partial h_{j}\left(\varpi(p_{i})\right)}{\partial t_{\varpi(p_{i})}}}\cdot\frac{\partial t_{\varpi(p_{i})}}{\partial n_{p_{i}}}\\
= & Q_{j}\left(\varpi(p_{i})\right)\cdot\underbrace{\frac{\partial n_{\varpi(p_{i})}}{\partial n_{p_{i}}}}_{\eta(p_{i})}\,.\end{align*}

\end{proof}
\begin{lem}
If $\eta$ is defined as\label{lem:InvolvedPoissonKernel} above,
and $b\in\mathbb{X}$ then \[
\mathbb{P}(b,\, p_{j})=\eta(p_{j})\,\mathbb{P}(b,\,\varpi(p_{j}))\,.\]

\end{lem}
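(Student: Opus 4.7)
The plan is to mirror the argument of the preceding theorem, with the Green's function $g(\cdot) := g(b,\cdot)$ of $R$ with pole at $b$ playing the role that $h_j$ played there. The key point is that $b \in \mathbb{X}$ means $\varpi(b) = b$, which forces $g$ to be $\varpi$-invariant. Concretely, $g \circ \varpi$ is harmonic on $R \setminus \{\varpi^{-1}(b)\} = R \setminus \{b\}$ (since $\varpi$ is an anticonformal involution with $\varpi(R) = R$), it vanishes on $B$ (since $\varpi(B) = B$), and it has the same logarithmic singularity at $b$ as $g$ (because any antiholomorphic involution can be linearised at a fixed point to $z \mapsto \bar z$, so $|\varpi'(b)| = 1$). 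Uniqueness of Green's function then gives $g \circ \varpi = g$.

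Next, I would invoke the standard relation between Poisson kernel and Green's function for a domain with analytic boundary,
\[
\mathbb{P}(b,z) = -\frac{1}{2\pi}\,\frac{\partial g}{\partial n_z}(z) \quad \text{for } z \in B,
\]
and then run exactly the chain-rule computation from the proof of the preceding theorem on the identity $g(p_j) = g(\varpi(p_j))$:
\[
\frac{\partial g}{\partial n_{p_j}}(p_j)
= \frac{\partial g}{\partial n_{\varpi(p_j)}}(\varpi(p_j))\cdot\frac{\partial n_{\varpi(p_j)}}{\partial n_{p_j}}
+ \cancel{\frac{\partial g}{\partial t_{\varpi(p_j)}}(\varpi(p_j))}\cdot \frac{\partial t_{\varpi(p_j)}}{\partial n_{p_j}},
\]
where the tangential term vanishes because $g$ is identically zero on $B$. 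Recognising the surviving factor $\partial n_{\varpi(p_j)}/\partial n_{p_j}$ as $\eta(p_j)$ and dividing through by $-2\pi$ yields the desired identity $\mathbb{P}(b,p_j) = \eta(p_j)\,\mathbb{P}(b,\varpi(p_j))$.

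The one substantive point to justify is the Green's-function representation of $\mathbb{P}$ and the matching of $\eta$ with the anticonformal Jacobian. Analytic boundary makes the former routine, and the latter is already implicit in the definition of $\eta$. A proof avoiding Green's functions altogether is also available and may be preferable for uniformity with the rest of the paper: starting from the $\varpi$-invariance of harmonic measure at $b$ (namely $\omega_b = \varpi_*\omega_b$, which follows from $h(b) = (h\circ\varpi)(b)$ for all $h$ harmonic and continuous on $B$), one performs a change of variables $z \mapsto \varpi(z)$ in
\[
h(b) = \int_B h(z)\,\mathbb{P}(b,z)\,ds(z)
\]
and reads off $\mathbb{P}(b,z) = |\varpi'(z)|\,\mathbb{P}(b,\varpi(z))$ from uniqueness of the Radon--Nikodym derivative; identifying $|\varpi'(z)|$ with $\eta(z)$ (anticonformality preserves the normal direction up to sign and scales by the derivative's modulus) completes the proof. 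The main obstacle in either route is essentially bookkeeping rather than substance.
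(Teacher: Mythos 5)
Your proposal is correct, and the second route you sketch (pushing forward harmonic measure under $\varpi$, using $\omega_{b}=\varpi_{*}\omega_{b}$ from the fixed point $\varpi(b)=b$, and reading off the Radon--Nikodym derivatives) is precisely the paper's argument, including the identification $ds(\varpi(p_{j}))/ds(p_{j})=dn_{\varpi(p_{j})}/dn_{p_{j}}=\eta(p_{j})$ via sense-reversal and the Cauchy--Riemann equations for anti-holomorphic maps. The Green's-function route is a valid alternative, but it ultimately rests on the same Jacobian identity, so it buys nothing over the harmonic-measure argument.
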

\begin{proof}
We can write \[
\mathbb{P}(b,\, p_{j})=\frac{d\omega_{b}(p_{j})}{ds(p_{j})}\quad\text{and}\quad\mathbb{P}(b,\,\varpi(p_{j}))=\frac{d\omega_{b}(\varpi(p_{j}))}{ds(\varpi(p_{j}))}\,,\]
 and note that if $h$ is harmonic, then $h\circ\varpi$ is harmonic,
and $h\circ\varpi(b)=h(b)$. So, for any measurable set $E\subseteq B$,
\[
\omega_{b}(E)=\omega_{b}(\varpi(E))\,,\]
 so $d\omega_{b}(p_{j})=d\omega_{b}(\varpi(p_{j}))$. Hence, \begin{multline*}
\mathbb{P}(b,\, p_{j})=\frac{d\omega_{b}(p_{j})}{ds(p_{j})}=\frac{d\omega_{b}(\varpi(p_{j}))}{ds(p_{j})}=\frac{ds(\varpi(p_{j}))}{ds(p_{j})}\cdot\frac{d\omega_{b}(\varpi(p_{j}))}{ds(\varpi(p_{j}))}\\
=\frac{dn_{\varpi(p_{j})}}{dn_{p_{j}}}\cdot\mathbb{P}(b,\,\varpi(p_{j}))=\eta(p_{j})\,\mathbb{P}(b,\,\varpi(p_{j}))\,,\end{multline*}
 since \[
\frac{ds(\varpi(p_{j}))}{ds(p_{j})}\underbrace{=}_{\star}-\frac{dt_{\varpi(p_{j})}}{dt_{p_{j}}}\underbrace{=}_{\dagger}\frac{dn_{\varpi(p_{j})}}{dn_{p_{j}}}\,,\]
 where $\star$ is due to the fact that $\varpi$ is sense reversing,
and $\dagger$ is due to the Cauchy-Riemann equation for anti-holomorphic
maps. 
\end{proof}
\begin{defn}
We say a holomorphic $2\times2$ matrix valued function $F$ on $R$
has a \emph{standard zero set} if
\begin{enumerate}
\item $F$ has distinct zeroes $b,\, a_{1},\,\ldots,\, a_{2n}$, where $F(b)=0$,
and $\det\left(F\right)$ has zeroes of multiplicity one at each of
$a_{1},\,\ldots,\, a_{2n}$;
\item if $\gamma_{j}\neq0$ are such that $F(a_{j})^{*}\gamma_{j}=0$, $j=1,\,\ldots,\,2n$,
then no $n+1$ of the $\gamma_{j}$ lie on the same complex line through
the origin;
\item $Ja_{j}\neq P_{i}$ for $j=1,\,\ldots,\,2n$, $i=1,\,\ldots,\, n$,
where $P_{1},\,\ldots,\, P_{n}$ are the poles of the Fay kernel $K^{b}(\cdot,\, z)$.
\end{enumerate}
We have not defined $K^{b}$ yet, and will not do so until Section
\ref{sec:Theta-Functions}. For now, all we need to know about $K^{b}$
is that all its poles are on $J(\mathbb{X})$. 
\end{defn}

\subsection{The construction}

We take $\psi_{\mathbf{p}}$ as in Theorem \vref{thm:OffX}. Note
that $\overline{\psi_{\mathbf{p}}\circ\varpi}$ is an inner function
with zeroes at $b$, $\varpi(z_{1})$, $\ldots$, $\varpi(z_{n})$,
equal to one at $p_{0}^{-}$, $\varpi(\mathbf{p}_{1})$, $\varpi(\mathbf{p}_{2})$,
$\ldots$, $\varpi(\mathbf{p}_{n})$, so must equal $\psi_{\varpi(\mathbf{p})}$.

\begin{defn}
We say $S$ is a \emph{team of projections} if $S$ is a collection
of $n$ pairs of non-zero orthogonal projections on $\mathbb{C}^{2}$,
$\left(P^{j+},\, P^{j-}\right)$, such that \[
P^{1+}=\left(\begin{array}{cc}
1 & 0\\
0 & 0\end{array}\right)\,,\quad P^{1-}=\left(\begin{array}{cc}
0 & 0\\
0 & 1\end{array}\right)\,,\quad P^{j+}+P^{j-}=I\,,\quad j=1,\,\ldots n\,.\]
 Let $S_{0}$ be the \emph{trivial team}, given by $P^{j\pm}=P^{1\pm}$
for all $j$.

We define \[
H_{S,p}=\tau_{0}(p)\mathbb{P}(\cdot,\, p_{0}^{-})I+\sum_{i=1}^{n}\tau_{i}(p)\left[\mathbb{P}(\cdot,\, p_{i})\, P^{i+}+\eta(p_{i})\mathbb{P}\left(\cdot,\,\varpi(p_{i})\right)\, P^{i-}\right]\,.\]

We note that, by Lemma \ref{lem:InvolvedPoissonKernel},\begin{align*}
H_{S,p}(b)= & \tau_{0}(p)\mathbb{P}(b,\, p_{0}^{-})I+\sum_{i=1}^{n}\tau_{i}(p)\left[\mathbb{P}(b,\, p_{i})\, I\right]\\
= & \left[\sum_{i=0}^{n}\tau_{i}(p)\,\mathbb{P}(b,\, p_{i})\right]\, I=\cancel{h_{p}(b)}\, I=I\,.\end{align*}

For $x\in\mathbb{C}^{2}$ a unit vector, $\left\langle H_{S,p}\, x,\, x\right\rangle $
corresponds to the measure \[
\mu_{x,x}=\tau_{0}\delta_{p_{0}^{-}}+\sum_{i=1}^{n}\tau_{i}\cdot\left[\delta_{p_{i}}\,\left\Vert P^{i+}x\right\Vert +\delta_{\varpi(p_{i})}\eta(p_{i})\,\left\Vert P^{i-}x\right\Vert \right]\,,\]
 so \begin{align*}
\int_{B}Q_{j}d\mu_{x,x} & =\tau_{0}Q_{j}(p_{0}^{-})+\sum_{i=1}^{n}\tau_{i}\left[Q_{j}(p_{i})\,\left\Vert P^{i+}x\right\Vert +\eta(p_{i})Q_{j}\left(\varpi(p_{i})\right)\,\left\Vert P^{i-}x\right\Vert \right]\\
 & =\tau_{0}Q_{j}(p_{0}^{-})+\sum_{i=1}^{n}\tau_{i}Q_{j}(p_{i})\,\cancel{\left\Vert x\right\Vert }\\
 & =0\,,\end{align*}
 by definition of $\tau$.

Hence, $\left\langle H_{S,p}\, x,\, x\right\rangle $ is the real
part of an analytic function, so $H_{S,p}$ is the real part of a
holomorphic $2\times2$ matrix function $G_{S,p}$, normalised by
$G_{S,p}(b)=I$.

We now define \[
\Psi_{S,p}=\left(G_{S,p}-I\right)\cdot\left(G_{S,p}+I\right)^{-1}\,.\]

\end{defn}
\begin{lem}
\label{lem:PsiSp}If $\mathbf{p}$ is as in Theorem \vref{thm:OffX},
for each $S$:
\begin{enumerate}
\item $\Psi_{S,\mathbf{p}}$ is analytic in a neighbourhood of $X$ and
unitary v\label{itm:PsiAnalyticUnitary}alued on $B$;
\item $\Psi_{S,\mathbf{p}}(b)=0$\label{itm:Psi(b)=00003D00003D0};
\item $\Psi_{S,\mathbf{p}}(p_{0}^{-})=I$\label{itm:Psi=00003D00003D1};
\item $\Psi_{S,\mathbf{p}}(\mathbf{p}_{1})e_{1}=e_{1}$ and $\Psi_{S,\mathbf{p}}\left(\varpi(\mathbf{p}_{1})\right)e_{2}=e_{2}$\label{itm:Psi=00003D00003De1};
\item $\Psi_{S,\mathbf{p}}(\mathbf{p}_{i})\, P^{i+}=P^{i+}$ and $\Psi_{S,\mathbf{p}}(\varpi(\mathbf{p}_{i}))\, P^{i-}=P^{i-}$\label{itm:Psi=00003D00003DP1};
\item $\Psi_{S_{0},\mathbf{p}}=\left(\begin{smallmatrix}\psi_{\mathbf{p}} & 0\\
0 & \psi_{\varpi(\mathbf{p})}\end{smallmatrix}\right)$\label{itm:Psi0}.
\end{enumerate}
\end{lem}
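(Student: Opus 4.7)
The plan is to verify the six properties using the matrix Cayley identity $\Psi_{S,\mathbf{p}} = I - 2(G_{S,\mathbf{p}} + I)^{-1}$ together with the explicit description of $H_{S,\mathbf{p}}$ as a sum of Poisson kernels weighted by projections. By Theorem \ref{thm:OffX}, the singular set of $G_{S,\mathbf{p}}$ on $B$ consists of the $2n+1$ distinct points $p_0^-,\,\mathbf{p}_1,\,\varpi(\mathbf{p}_1),\,\ldots,\,\mathbf{p}_n,\,\varpi(\mathbf{p}_n)$, and exactly one summand of $H_{S,\mathbf{p}}$ concentrates at each of them.

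For (1)--(5), note that away from the singular points $H_{S,\mathbf{p}}$ vanishes on $B$, so $G_{S,\mathbf{p}}$ is skew-Hermitian there, which forces $\Psi_{S,\mathbf{p}}$ to be unitary wherever analytic. Near each singular point $q$, the single blowing-up summand contributes a simple matrix pole of the form $\alpha_q(z)\,Q_q$ to $G_{S,\mathbf{p}}$, where $\alpha_q$ is scalar-meromorphic and $Q_q$ is either $I$ (at $p_0^-$) or one of the projections $P^{i\pm}$ (at $\mathbf{p}_i$ or $\varpi(\mathbf{p}_i)$ respectively). A direct local computation in a basis diagonalising $Q_q$ shows that $(G_{S,\mathbf{p}}+I)^{-1}$ acquires a matrix zero on the range of $Q_q$, so $\Psi_{S,\mathbf{p}}$ extends analytically to $q$ with $\Psi_{S,\mathbf{p}}(q)\,Q_q = Q_q$, and unitarity on $B$ extends by continuity. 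This delivers (1), (3), and (5) simultaneously; (2) is immediate from $G_{S,\mathbf{p}}(b)=I$; and (4) is the $i=1$ case of (5) applied to the basis vectors, since $P^{1+} = \operatorname{diag}(1,0)$ and $P^{1-} = \operatorname{diag}(0,1)$.

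For (6), substituting the trivial team collapses $H_{S_0,\mathbf{p}}$ to the diagonal matrix $h_{\mathbf{p}}(z)\,P^{1+} + h'(z)\,P^{1-}$ where $h'(z) := \tau_0(\mathbf{p})\,\mathbb{P}(z,p_0^-) + \sum_{i=1}^n \tau_i(\mathbf{p})\,\eta(\mathbf{p}_i)\,\mathbb{P}(z,\varpi(\mathbf{p}_i))$. The crux is identifying $h' = h_{\varpi(\mathbf{p})}$. Using $Q_j(\varpi(\mathbf{p}_i)) = Q_j(\mathbf{p}_i)/\eta(\mathbf{p}_i)$ for $i \geq 1$ and $\eta(p_0^-) = 1$ (since $p_0^- \in \mathbb{X}$), one checks that $M(\varpi(\mathbf{p})) = M(\mathbf{p})\,D$ with $D = \operatorname{diag}(1,1/\eta(\mathbf{p}_1),\ldots,1/\eta(\mathbf{p}_n))$; consequently $\kappa(\varpi(\mathbf{p}))$ is a positive multiple of $D^{-1}\kappa(\mathbf{p})$, and Lemma \ref{lem:InvolvedPoissonKernel} guarantees that the same proportionality constant appears in the Poisson normalisation $k_p(b)$, yielding $\tau_i(\varpi(\mathbf{p})) = \eta(\mathbf{p}_i)\,\tau_i(\mathbf{p})$ for $i \geq 1$ and $\tau_0(\varpi(\mathbf{p})) = \tau_0(\mathbf{p})$. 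Substitution gives $h' = h_{\varpi(\mathbf{p})}$, so $G_{S_0,\mathbf{p}} = \operatorname{diag}(f_{\mathbf{p}}, f_{\varpi(\mathbf{p})})$, and the scalar Cayley transform finishes the argument.

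The main obstacle is twofold. First, the local pole analysis underpinning (1) and (3)--(5) requires verifying that the matrix Cayley transform of a function with rank-deficient projection residue extends analytically across the pole while fixing the range of that projection; this is cleanest via an entrywise computation in a basis diagonalising $Q_q$. Second, the symmetry calculation in (6) demands careful bookkeeping of how $\varpi$ twists both the kernel of the period matrix $M(p)$ and the Poisson normalisation $k_p(b)$, and rests crucially on Lemma \ref{lem:InvolvedPoissonKernel}.
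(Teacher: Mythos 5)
Your proposal is correct and follows essentially the same route as the paper: unitarity off the singular points via the vanishing of $H_{S,\mathbf{p}}$ on $B$, and a local computation of $(G_{S,\mathbf{p}}+I)^{-1}$ at each boundary pole in a basis diagonalising the relevant projection, which is exactly the paper's explicit calculation at $p_0^-$ and $\mathbf{p}_1$ done uniformly. Your verification of (6) via $M(\varpi(\mathbf{p}))=M(\mathbf{p})D$ and the resulting transformation $\tau_i(\varpi(\mathbf{p}))=\eta(\mathbf{p}_i)\tau_i(\mathbf{p})$ is a correct (and welcome) elaboration of a step the paper dismisses as following easily from the definitions.
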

\begin{proof}
Thinking about $\mathbb{P}(z,\, r)$ as a function of $z$, in a neighbourhood
of $r\in B$, the Poisson kernel $\mathbb{P}(z,\, r)$ is the real
part of some function of the form $g_{r}(z)(z-r)^{-1}$, where $g_{r}$
is analytic in the neighbourhood, and non-vanishing at $r$ (by \cite[Ch. 4, Prop. 6.4]{Fisher}).
At any other point $q\in B$, $\mathbb{P}(z,\, r)$ extends to a harmonic
function on a neighbourhood of $q$, so must be the real part of some
analytic function, with real part $0$ at $q$.

We can see that if $r\in B$ is not $p_{0}^{-},\,\mathbf{p}_{1},\,\ldots,\,\mathbf{p}_{n},\,\varpi(\mathbf{p}_{1}),\,\ldots,\,\varpi(\mathbf{p}_{n})$,
then $G_{S,\mathbf{p}}$ is analytic in a neighbourhood of $r$. Further,
$G_{S,\mathbf{p}}+I$ is invertible near $r$ as $G_{S,\mathbf{p}}(z)=H_{S,\mathbf{p}}(z)+iA(z)$
for some self-adjoint matrix valued function $A(z)$, and $H_{S,\mathbf{p}}(r)=0$.
Thus, $G_{S,\mathbf{p}}$ is invertible at and, by continuity, near
$r$. We have \[
I-\Psi_{S,\mathbf{p}}\Psi_{S,\mathbf{p}}^{*}=2(G_{S,\mathbf{p}}+I)^{-1}\underbrace{(G_{S,\mathbf{p}}+G_{S,\mathbf{p}}^{*})}_{iA+(iA)^{*}=0}(G_{S,\mathbf{p}}+I)^{*-1}\,,\]
 which is zero at $r$, so $\Psi_{S,\mathbf{p}}$ must be unitary
at $r$.

From the definition of $G_{S,\mathbf{p}}$, in a neighbourhood of
$p_{0}^{-}$, there are analytic functions $g_{1},\, g_{2},\, k_{1},\, k_{2}$
so that the real parts of $k_{j}$ are $0$ at $p_{0}^{-}$, each
$g_{j}$ is non-vanishing at $p_{0}^{-}$, and \[
G_{S,\mathbf{p}}(z)=\left(\begin{array}{cc}
\frac{g_{1}(z)}{z-p_{0}^{-}} & k_{1}(z)\\
k_{2}(z) & \frac{g_{2}(z)}{z-p_{0}^{-}}\end{array}\right)\,,\]
 so \begin{align*}
(G_{S,\mathbf{p}}(z)+I)^{-1}= & \frac{1}{\frac{g_{1}+z-p_{0}^{-}}{z-p_{0}^{-}}\frac{g_{2}+z-p_{0}^{-}}{z-p_{0}^{-}}-k_{1}(z)k_{2}(z)}\left(\begin{array}{cc}
\frac{g_{2}(z)-z-p_{0}^{-}}{z-p_{0}^{-}} & -k_{1}(z)\\
-k_{2}(z) & \frac{g_{1}(z)-z-p_{0}^{-}}{z-p_{0}^{-}}\end{array}\right)\\
= & \frac{\left(\begin{array}{cc}
\left(g_{2}(z)-z-p_{0}^{-}\right)(z-p_{0}^{-}) & -k_{1}(z)(z-p_{0}^{-})^{2}\\
-k_{2}(z)(z-p_{0}^{-})^{2} & \left(g_{1}(z)-z-p_{0}^{-}\right)(z-p_{0}^{-})\end{array}\right)}{\left(g_{1}(z)-z-p_{0}^{-}\right)\left(g_{2}(z)-z-p_{0}^{-}\right)-k_{1}(z)k_{2}(z)\,(z-p_{0}^{-})^{2}}\,.\end{align*}
 Note that the denominator is non-zero at and near $p_{0}^{-}$, so
$G_{S,\mathbf{p}}+I$ is invertible. We can use this to calculate
$\Psi_{S,\mathbf{p}}$ directly%
\footnote{The calculation is omitted, but can be readily verified by hand, or
with a computer algebra system%
}, and show that $\Psi_{S,\mathbf{p}}$ is analytic in a neighbourhood
of $p_{0}^{-}$, and $\Psi_{S,\mathbf{p}}(p_{0}^{-})=I$, so we have
(\ref{itm:Psi=00003D00003D1}).

Now we look at $\mathbf{p}_{1}$. Near $\mathbf{p}_{1}$ we have analytic
functions $g,\, k_{1},\, k_{2},\, k_{3}$, on a neighbourhood of $\mathbf{p}_{1}$,
where $k_{1},\, k_{2},\, k_{3}$ have zero real part at $\mathbf{p}_{1}$,
$g$ is non-zero at $\mathbf{p}_{1}$, and \[
G_{S,\mathbf{p}}(z)=\left(\begin{array}{cc}
\frac{g(z)}{z-\mathbf{p}_{1}} & k_{1}\\
k_{2} & k_{3}\end{array}\right)\,.\]
 Since $k_{3}+1$ has real part $1$ at $\mathbf{p}_{1}$, $g(z)\,(z-\mathbf{p}_{1})^{-1}$
has a pole, and $k_{1},\, k_{2}$ are analytic at $\mathbf{p}_{1}$,
we see that $G_{S,\mathbf{p}}$ is invertible near $\mathbf{p}_{1}$.
By direct computation, we see that $\Psi_{S,\mathbf{p}}$ is analytic
in a neighbourhood of $\mathbf{p}_{1}$ and \[
\Psi_{S,\mathbf{p}}(\mathbf{p}_{1})=\left(\begin{array}{cc}
1 & 0\\
0 & \frac{k_{3}(\mathbf{p}_{1})-1}{k_{3}(\mathbf{p}_{1})+1}\end{array}\right)\,.\]
 A similar argument holds for $\varpi(\mathbf{p}_{1})$, so we have
(\ref{itm:Psi=00003D00003De1}), and by working in the orthonormal
basis induced by $P^{j+}$ and $P^{j-}$, (\ref{itm:Psi=00003D00003DP1})
follows. Also, we have now shown $\Psi_{S,\mathbf{p}}$ is analytic
at every point, so (\ref{itm:PsiAnalyticUnitary}) follows.

(\ref{itm:Psi0}) and (\ref{itm:Psi(b)=00003D00003D0}) follow easily
from the definitions. 
\end{proof}
\begin{lem}
\label{lem:PsiSmp}We define $\left\Vert S_{1}-S_{2}\right\Vert _{\infty}=\max_{j\pm}\left\Vert P_{1}^{j\pm}-P_{2}^{j\pm}\right\Vert $,
giving a metric on the space $\mathcal{T}$ of all teams of projections.
There exists some non-trivial sequence $S_{m}\to S_{0}$ such that
for all $m$, $\Psi_{S_{m},\mathbf{p}}$ has a standard zero set. 
\end{lem}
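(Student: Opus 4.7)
The plan is to show that the set of $S\in\mathcal{T}$ for which $\Psi_{S,\mathbf{p}}$ has a standard zero set is a full neighbourhood of $S_{0}$, and then to extract the required sequence using the fact that $\mathcal{T}$ is a manifold of positive real dimension at $S_{0}$ (the $n-1$ free pairs $(P^{j+},P^{j-})$ each range over a copy of $\mathbb{CP}^{1}$, and $n\geq 2$).

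For the base case $S=S_{0}$, Lemma \ref{lem:PsiSp}(6) gives that $\Psi_{S_{0},\mathbf{p}}$ is the diagonal matrix with entries $\psi_{\mathbf{p}}$ and $\psi_{\varpi(\mathbf{p})}=\overline{\psi_{\mathbf{p}}\circ\varpi}$. By Theorem \ref{thm:OffX}, the zero set of $\det\Psi_{S_{0},\mathbf{p}}=\psi_{\mathbf{p}}\,\psi_{\varpi(\mathbf{p})}$ consists of $b$ with multiplicity exactly $2$ (since both $\psi_{\mathbf{p}}'(b)$ and $\psi_{\varpi(\mathbf{p})}'(b)$ are non-zero) together with $2n$ pairwise distinct simple zeros $z_{1},\ldots,z_{n},\varpi(z_{1}),\ldots,\varpi(z_{n})$. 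At each $z_{j}$ the cokernel vector $\gamma_{j}^{0}$ is a scalar multiple of $e_{1}$, and at each $\varpi(z_{j})$ a scalar multiple of $e_{2}$. None of these simple zeros lies on $\mathbb{X}$, so their $J$-images avoid the closed set $J(\mathbb{X})$, which contains the poles of $K^{b}$. Hence conditions (1) and (3) of a standard zero set already hold at $S_{0}$, and the $2n$ cokernel directions split as $n$ on $\mathbb{C}e_{1}$ and $n$ on $\mathbb{C}e_{2}$.

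Next I would extend by continuity. The map $S\mapsto\Psi_{S,\mathbf{p}}$ is continuous in the compact-open topology on a neighbourhood of $X$: the Poisson-kernel data and $\tau(\mathbf{p})$ are independent of $S$, taking harmonic conjugates is continuous, and the inversion of $G_{S,\mathbf{p}}+I$ remains well defined at each of $p_{0}^{-}$, $\mathbf{p}_{i}$, $\varpi(\mathbf{p}_{i})$ by the local analysis already carried out in the proof of Lemma \ref{lem:PsiSp}. Since $b$ persists as a zero of $\det\Psi_{S,\mathbf{p}}$ of multiplicity exactly $2$ (non-degeneracy of the first-order term at $b$ being an open condition), the remaining $2n$ zeros $a_{j}(S)$ vary continuously, stay distinct and simple, and stay off $\mathbb{X}$; so conditions (1) and (3) are open and persist on a neighbourhood of $S_{0}$. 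The corresponding cokernel directions $\gamma_{j}(S)$ also vary continuously and tend to $\gamma_{j}^{0}$ as $S\to S_{0}$.

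Condition (2) I would handle by a pigeonhole argument rather than a transversality one. Shrink the neighbourhood of $S_{0}$ so that each $\gamma_{j}(S)$, viewed as a point of $\mathbb{CP}^{1}$, is strictly closer to its limit in $\{[e_{1}],[e_{2}]\}$ than to the opposite point. Any complex line through the origin is then within this threshold of at most one of $[e_{1}],[e_{2}]$, so meets only those $\gamma_{j}(S)$ whose limit is that single direction, at most $n$ in number. No line contains $n+1$ of them, giving (2). Combining the three, the standard zero set property holds throughout a neighbourhood of $S_{0}$, and any sequence $S_{m}\to S_{0}$ inside that neighbourhood with $S_{m}\neq S_{0}$ does the job. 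The main subtlety is confirming that the multiplicity of $b$ as a zero of $\det\Psi_{S,\mathbf{p}}$ stays exactly $2$ under perturbation, which pins down the count of the $2n$ remaining zeros; everything else rests on continuity together with the clean $n$-plus-$n$ split of the cokernel directions at $S_{0}$.
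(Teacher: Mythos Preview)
Your proposal is correct and takes essentially the same approach as the paper: verify that $\Psi_{S_{0},\mathbf{p}}$ has a standard zero set, establish that $\Psi_{S,\mathbf{p}}\to\Psi_{S_{0},\mathbf{p}}$ as $S\to S_{0}$, track the zeros of $\det\Psi_{S,\mathbf{p}}$ by a Hurwitz argument, track the cokernel directions by continuity, and finish condition~(2) with the same pigeonhole observation that no line can be close to both $[e_{1}]$ and $[e_{2}]$. The only cosmetic difference is that the paper runs this as a sequence-plus-subsequence argument (invoking uniform boundedness and normal-family compactness to identify the limit as $\Psi_{S_{0},\mathbf{p}}$) rather than your open-neighbourhood formulation; your claim of continuity on a full neighbourhood of $X$ is slightly stronger than what is actually needed or used, since convergence on compact subsets of $R$ already suffices for tracking zeros and cokernels.
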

\begin{proof}
Since the zeroes of $\psi_{\mathbf{p}}$ and $\psi_{\varpi(\mathbf{p})}$
are all distinct except for $b$, it is clear that \[
\Psi_{S_{0},\mathbf{p}}=\left(\begin{array}{cc}
\psi_{\mathbf{p}} & 0\\
0 & \psi_{\varpi(\mathbf{p})}\end{array}\right)\]
 has a standard zero set.

We note that whatever value we take for $\epsilon$, there is an $S\neq S_{0}$
within $\epsilon$ of $S_{0}$, so there is some non-trivial sequence
$S_{m}$ converging to $S_{0}$.

The sequence $\Psi_{S_{m},\mathbf{p}}$ is uniformly bounded, so has
a sub-sequence $\Psi_{m}$ which converges uniformly on compact subsets
of $R$ to some $\Psi$. This means \[
G_{m}=(I+\Psi_{m})(I-\Psi_{m})^{-1}\]
 converges uniformly on compact subsets of $R$ to \[
G=(I+\Psi)(I-\Psi)^{-1}\,.\]
 $H_{m}$, the real part of $G_{m}$ is harmonic, and \[
H_{m}-H_{0}=\sum_{i=2}^{n}\tau_{i}(\mathbf{p})\mathbb{P}(\cdot,\,\mathbf{p}_{i})\left[P_{m}^{i+}-P^{1+}\right]+\tau_{i}\left(\varpi(\mathbf{p})\right)\mathbb{P}(\cdot,\,\varpi(\mathbf{p}_{i}))\left[P_{m}^{i-}-P^{1-}\right]\,.\]
 Since $P_{m}^{i\pm}\rightarrow P^{1\pm}$, we see that $H_{m}\to H_{0}$,
and since $G(b)=I=G_{0}(b)$, $G_{m}\to G_{0}$, so $\Psi=\Psi_{0}$,
and $\Psi_{m}\to\Psi_{0}$ uniformly on compact sets.

Let $d_{m}(z)=\det(\Psi_{m}(z))$. This is analytic, and unimodular
on $B$. Draw small, disjoint circles in $R$ around the zeroes of
$d_{0}$ (which correspond to the zeroes of $\Psi_{0}$). By Hurwitz's
theorem, there exists some $M$ such that for all $m\geq M$, $d_{m}$
and $d_{0}$ have the same number of zeroes in each of these circles,
so the zeroes of $d_{m}$ must be distinct, apart from the repeated
zero at $b$. In particular, the zeroes ($b,\, a_{1}^{m},\,\ldots,\, a_{2n}^{m}$)
of $\Psi_{m}$ converge to the zeroes ($b,\, a_{1}^{0},\,\ldots,\, a_{2n}^{0}$)
of $\Psi_{0}$.

Finally, if $\Vert\gamma_{1}^{m}\Vert=1$, $\Psi_{m}(a_{1}^{m})^{*}\gamma_{1}^{m}=0$
and $a_{1}^{m}$ is close to $a_{1}^{0}$, then \[
\Psi_{0}(a_{1}^{0})^{*}\gamma_{1}^{m}=\left(\Psi_{0}(a_{1}^{0})-\Psi_{0}(a_{1}^{m})\right)^{*}\gamma_{1}^{m}+\left(\Psi_{0}(a_{1}^{m})-\Psi_{m}(a_{1}^{m})\right)^{*}\gamma_{1}^{m}\,.\]
 However, the right hand side tends to zero as $m$ tends to infinity,
so the projection of $\gamma_{1}^{m}$ onto the image of $\Psi_{0}(a_{1}^{0})^{*}$
tends to zero. Since $\gamma_{1}^{m}$ is a bounded sequence in a
finite-dimensional complex space, it has a convergent sub-sequence,
which we shall also call $\gamma_{1}^{m}$. This $\gamma_{1}^{m}$
must converge to something in the kernel of $\Psi_{0}(a_{1}^{0})^{*}$,
that is, a multiple of $e_{1}$. We apply this argument to $a_{2},\,\ldots,\, a_{2n}$,
and find a sub-sequence such that $n$ of the $\gamma_{i}^{m}$s tend
to multiples of $e_{1}$ and $n$ of them tend to multiples of $e_{2}$,
so for $m$ big enough, no $n+1$ of them are collinear. 
\end{proof}

\section{\label{sec:Theta-Functions}Theta Functions}

\subsection{The Jacobian Variety}

We know that for each $i=1,\,\ldots,\, n$, $h_{i}$ is locally the
real part of an analytic function $g_{i}$. The differential $dg_{i}$
can be extended from $R$ to $Y$ (as in Theorem \ref{thm:RedGreen},
$Y$ is the Schottky double of $R$), and \[
\alpha_{i}:=\frac{1}{2}dg_{i}\,,\qquad i=1,\,\ldots,\, n\]
 is then a basis for the space of holomorphic 1-forms on $Y$. We
see that if we define a homology basis for $Y$ by $A_{j}=\mathbb{X}_{j}-J(\mathbb{X}_{j})$
and $B_{j}$ as before, then $\int_{A_{j}}\alpha_{i}=\delta_{ij}$
and \[
\Omega:=\left(\int_{B_{j}}\alpha_{i}\right)_{ij}\]
 has positive definite imaginary part (see, for example, \cite[III.2.8]{FarkasKra}).

We define a lattice \[
L:=\mathbb{Z}^{n}+\Omega\mathbb{Z}^{n}\subseteq\mathbb{C}^{n}\]
 define the \emph{Jacobian variety} by \[
\mathcal{J}(Y):=\mathbb{C}^{n}/L\,,\]
 and define the \emph{Abel-Jacobi maps} $\chi:Y\to\mathbb{C}^{n}$
and $\chi_{0}:Y\to\mathcal{J}(Y)$ by \[
\chi(y):=\left(\begin{array}{c}
\int_{p_{0}^{-}}^{y}\alpha_{1}\\
\vdots\\
\int_{p_{0}^{-}}^{y}\alpha_{n}\end{array}\right)\,,\qquad\chi_{0}(y)=\left[\chi(y)\right]\,.\]
 Note that the integral depends on the path integrated over. However,
any two paths differ only by a closed path, and $A_{1},\,\ldots,\, A_{n},\, B_{1},\,\ldots,\, B_{n}$
is a homology basis for $Y$, so any closed path is homologous to
a sum of paths in this basis. Also, \[
\int_{A_{j}}\alpha_{i}\,,\:\int_{B_{j}}\alpha_{i}\in L\text{, so }\left[\int_{A_{j}}\alpha_{i}\right]=\left[\int_{B_{j}}\alpha_{i}\right]=0\,,\]
 so the choice of path to integrate over does not affect $\chi_{0}(y)$.

\begin{prop}
The Abel-Jacobi map has the following properties:
\begin{enumerate}
\item $\chi_{0}$ is a one-one\label{itm:into} conformal map of $Y$ onto
its image in $\mathcal{J}(Y)$; and
\item $\chi_{0}(Jy)=-\chi_{0}(y)^{*}$, where $^{*}$ denotes the coordinate-wise
conjugate\label{itm:mirror}.
\end{enumerate}
\end{prop}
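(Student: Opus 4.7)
For part (1), my plan is to handle holomorphy, injectivity, and the non-vanishing of the derivative separately, each using standard tools from the theory of compact Riemann surfaces. Holomorphy of $\chi_0$ is immediate, since each $\alpha_i$ is a holomorphic 1-form and the integrals depend holomorphically on the upper limit. For injectivity I would argue by contradiction via Abel's theorem: if $\chi_0(y_1)=\chi_0(y_2)$ with $y_1\neq y_2$, then $\chi(y_1)-\chi(y_2)\in L$, so the degree-zero divisor $y_1-y_2$ maps to $0\in\mathcal{J}(Y)$ and is therefore principal; but a meromorphic function with divisor $y_1-y_2$ has a single simple pole, giving a degree-one map $Y\to\mathbb{CP}^1$ and forcing $Y$ to have genus zero, which contradicts $g=n\geq 2$.

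For the non-vanishing of $d\chi_0$, note that in a local coordinate at $y$ the derivative is the column vector $(\alpha_1(y),\ldots,\alpha_n(y))^{T}$, so I must rule out a common zero of the basis. If all $\alpha_i$ vanished at $y$, then $\dim L(K-y)=n$, and Riemann--Roch gives
\[
\dim L(y)=\deg(y)+1-g+\dim L(K-y)=1+1-n+n=2,
\]
yielding a non-constant meromorphic function with a simple pole at $y$, and hence again the contradiction $Y\cong\mathbb{CP}^1$. Together with injectivity and holomorphy, this makes $\chi_0$ a conformal embedding onto its image.

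For part (2), the key identity is $J^{*}\alpha_i=-\overline{\alpha_i}$, which I would establish by comparing $A$-periods. Since $J$ is anticonformal, $J^{*}\alpha_i$ is antiholomorphic, and antiholomorphic 1-forms on $Y$ are determined by their $A$-periods (they form the basis dual to $\overline{\alpha_j}$). Using $J_{*}A_j=J(\mathbb{X}_j)-\mathbb{X}_j=-A_j$, I compute
\[
\int_{A_j}J^{*}\alpha_i=\int_{J_{*}A_j}\alpha_i=-\delta_{ij}=-\overline{\delta_{ij}}=\int_{A_j}\bigl(-\overline{\alpha_i}\bigr),
\]
so the identity follows. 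Picking a path $\gamma$ from $p_0^{-}$ to $y$ and using $Jp_0^{-}=p_0^{-}$, the path $J\gamma$ runs from $p_0^{-}$ to $Jy$, giving
\[
\chi(Jy)=\int_{J\gamma}\alpha=\int_{\gamma}J^{*}\alpha=-\overline{\int_{\gamma}\alpha}=-\chi(y)^{*}.
\]

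To interpret this identity in $\mathcal{J}(Y)$, the lattice $L$ must be invariant under $v\mapsto -v^{*}$. A parallel computation on $B$-cycles, using that $J$ fixes each $B_j$ pointwise so $J_{*}B_j=B_j$, yields $\overline{\Omega}=-\Omega$; hence $\Omega$ is purely imaginary and $-\overline{L}=\mathbb{Z}^{n}+\Omega\mathbb{Z}^{n}=L$, so the involution $v\mapsto -v^{*}$ descends to $\mathcal{J}(Y)$ and $\chi_0(Jy)=-\chi_0(y)^{*}$ is well-defined there. The main technical point to watch is the sign bookkeeping in the $J_{*}$ action: $J$ reverses orientation on $Y$ as a whole yet is the identity on each $B_j$, producing the asymmetric behaviour $J_{*}A_j=-A_j$ versus $J_{*}B_j=B_j$ that drives both the determination of $J^{*}\alpha_i$ and the lattice invariance.
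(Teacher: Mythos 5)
Your proof is correct, but it goes by a different route than the paper, which disposes of the proposition in two lines: part (1) is simply cited to Farkas--Kra III.6.1 (your Abel/Riemann--Roch argument is in substance the proof given there, so no complaint), and part (2) is deduced from the explicit reflection identity $g_{j}(Jy)-g_{j}(p_{0}^{-})=-\overline{\left(g_{j}(y)-g_{j}(p_{0}^{-})\right)}$ satisfied by the analytic completions $g_{j}$ of the harmonic measures when extended to the Schottky double; since $\alpha_{i}=\tfrac{1}{2}dg_{i}$ and $p_{0}^{-}$ is fixed by $J$, integrating gives $\chi(Jy)=-\chi(y)^{*}$ immediately. You instead derive the differential identity $J^{*}\alpha_{i}=-\overline{\alpha_{i}}$ abstractly, by noting that $J^{*}\alpha_{i}$ is antiholomorphic and matching $A$-periods against the normalisation $\int_{A_{j}}\alpha_{i}=\delta_{ij}$ together with $J_{*}A_{j}=-A_{j}$. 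This is a perfectly valid alternative: it avoids any appeal to the concrete construction of the $g_{j}$ on the double, at the cost of invoking the fact that an antiholomorphic form with vanishing $A$-periods is zero. Your approach also buys something the paper leaves implicit, namely the verification that $v\mapsto-v^{*}$ descends to $\mathcal{J}(Y)$: your $B$-period computation showing $\overline{\Omega}=-\Omega$ (hence $-\overline{L}=L$) is a genuine point that the paper's one-line proof silently skips, and your sign bookkeeping ($J_{*}A_{j}=-A_{j}$ versus $J_{*}B_{j}=B_{j}$) is right.
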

\begin{proof}
(\ref{itm:into}) is proved in \cite[III.6.1]{FarkasKra}, (\ref{itm:mirror})
holds because $p_{0}^{-}\in\mathbb{X}$ and \[
g_{j}(Jy)-g_{j}(p_{0}^{-})=-\overline{\left(g_{j}(y)-g_{j}(p_{0}^{-})\right)}\,.\]

\end{proof}

\subsection{Theta Functions}

\begin{defn}
Roughly following \cite{MumfordI}, we define the \emph{theta function}
$\vartheta:\mathbb{C}^{n}\to\mathbb{C}$ by \[
\vartheta(z)=\sum_{m\in\mathbb{Z}^{n}}\exp\left(\pi i\left\langle \Omega m,\, m\right\rangle +2\pi i\left\langle z,\, m\right\rangle \right)\,,\]
 where $\left\langle \cdot,\cdot\right\rangle $ is the usual $\mathbb{C}^{n}$
inner product. This function is quasi-periodic, as \begin{align*}
\vartheta(z+m)= & \vartheta(z)\\
\vartheta(z+\Omega m)= & \exp\left(-\pi i\left\langle \Omega m,\, m\right\rangle -2\pi i\left\langle z,\, m\right\rangle \right)\vartheta(z)\end{align*}
 for all $m\in\mathbb{Z}^{n}$, as shown in \cite{MumfordI}. Given
$e\in\mathbb{C}^{n}$, we rewrite this as $e=u+\Omega v$ for some
$u,\, v\in\mathbb{R}^{n}$, and we define the \emph{theta function
with characteristic $e$}, $\vartheta[e]:\mathbb{C}^{n}\to\mathbb{C}$
by \[
\vartheta[e](z)=\vartheta\left[\begin{array}{c}
u\\
v\end{array}\right](z)=\exp\left(\pi i\left\langle \Omega v,\, v\right\rangle +2\pi i\left\langle z+u,\, v\right\rangle \right)\vartheta(z+e)\,.\]
 Note that this follows \cite{MumfordI}. Subtly different definitions
are used in \cite{Fay}, \cite{DritschelRationalDilation} and \cite{FarkasKra},
although these differences are not particularly important. 
\end{defn}
\begin{thm}
There exists a constant vector $\Delta$, depending on the choice
of base-point, such that for each $e\in\mathbb{C}^{n}$, either $\vartheta[e]\circ\chi$
is identically zero, or $\vartheta[e]\circ\chi$ has exactly $n$
zeroes, $\zeta_{1},\,\ldots,\,\zeta_{n}$ and \[
\sum_{i=1}^{n}\chi(\zeta_{i})=\Delta-e\,.\]

\end{thm}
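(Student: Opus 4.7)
The plan is to prove this via the classical argument principle applied on a cut version of $Y$, combined with the quasi-periodicity of $\vartheta[e]$. Throughout the argument, we may assume $F(y) := \vartheta[e](\chi(y))$ is not identically zero on $Y$; since $F$ is locally holomorphic in the local coordinate on $Y$ (it is the composition of the entire theta function with locally holomorphic $\chi$), its zero set is discrete.

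First I would cut $Y$ along the homology basis $A_1,\ldots,A_n,B_1,\ldots,B_n$ based at $p_0^-$ to obtain a simply connected $4n$-gon $\widetilde{Y}$ whose boundary is the word $A_1 B_1 A_1^{-1} B_1^{-1} \cdots A_n B_n A_n^{-1} B_n^{-1}$. On $\widetilde{Y}$ a single-valued branch of $\chi$ is obtained by integrating along paths inside $\widetilde{Y}$, so $F$ becomes single-valued and holomorphic there. If $A_j^+$ and $A_j^-$ are the two edges of $\partial\widetilde{Y}$ identified to form $A_j$ (and similarly for $B_j$), then for corresponding points $y^\pm$ on these edges one has $\chi(y^+) - \chi(y^-) = \Omega e_j$ on the $A_j$-pair (since crossing $A_j$ changes the path by a $B_j$-loop up to sign, and $\int_{B_j}\alpha = $ the $j$th column of $\Omega$), while the corresponding difference on the $B_j$-pair is $-e_j$. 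The signs and labelling will need to be fixed by convention.

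Next I would count zeros by the argument principle: $N = \frac{1}{2\pi i}\oint_{\partial \widetilde{Y}} d\log F$. On each pair of identified edges I substitute the quasi-periodicity relations for $\vartheta[e]$ (which, stripped of the prefactor, amount to $\vartheta[e](z+m) = \vartheta[e](z) \cdot (\text{unimodular constant})$ and $\vartheta[e](z+\Omega m) = \vartheta[e](z)\cdot \exp(-\pi i\langle\Omega m,m\rangle - 2\pi i\langle z+u,m\rangle + \text{const})$) to express $d\log F$ on the $+$-edge as $d\log F$ on the $-$-edge plus an exact differential of an affine function of $\chi$. The $A$-pair contributions vanish and the $B$-pair contributions sum telescopically to $\sum_{j=1}^n \int_{A_j} d\chi_j = n$, yielding $N=n$.

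For the sum of zeros I would apply the same pairing to $\sigma_k := \frac{1}{2\pi i}\oint_{\partial\widetilde{Y}} \chi_k(y)\, d\log F(y)$, which by the residue theorem equals $\sum_{i=1}^n \chi_k(\zeta_i)$. Computing the contribution of each paired edge using the same quasi-periodicity, one finds terms of three types: (i) an $e$-linear term, tracked to produce the $-e_k$ on the right-hand side; (ii) terms involving $\int_{A_j}\chi_k\, d\chi_j$ and $\int_{A_j}\alpha_k$, which depend only on $\Omega$ and the base-point $p_0^-$ and assemble into a constant $\Delta_k$; (iii) corner contributions and integer/lattice shifts, which vanish modulo $L$. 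The equality therefore holds in $\mathcal{J}(Y) = \mathbb{C}^n/L$, as required by the statement.

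The main obstacle is the edge bookkeeping in step two: keeping track of orientations, of which quasi-periodicity multiplier acts on which pair, and of the difference between the (single-valued) branches of $\chi$ on identified edges, all while ensuring lattice ambiguities are absorbed correctly. Once this is set up carefully, the count $N=n$ and the identity $\sum \chi(\zeta_i) \equiv \Delta - e \pmod L$ both fall out of essentially the same boundary calculation.
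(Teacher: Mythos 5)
The paper does not prove this theorem itself but simply cites Mumford and Farkas--Kra, and your argument is precisely the classical proof found there: cut $Y$ along the canonical homology basis into a $4n$-gon, count the zeros of $F=\vartheta[e]\circ\chi$ by the argument principle using quasi-periodicity, and evaluate $\frac{1}{2\pi i}\oint_{\partial\widetilde{Y}}\chi_k\,d\log F$ to obtain $\sum_i\chi(\zeta_i)\equiv\Delta-e\pmod L$. The one slip is internal to your labelling: given your own jump formulas ($\chi$ jumps by $\Omega e_j$ across the $A_j$-pair and by $-e_j$ across the $B_j$-pair), the nontrivial theta multiplier, and hence the contribution $\sum_j\int_{A_j}d\chi_j=n$, comes from the $A$-pairs while the $B$-pairs vanish — the opposite of what you assert in words — but the final formula you write is the correct one and this is exactly the convention bookkeeping you flagged.
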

\begin{proof}
See \cite[Ch. 2, Cor. 3.6]{MumfordI} or \cite[VI.2.4]{FarkasKra}. 
\end{proof}
For the following, it will be convenient to define \[
\mathcal{E}_{e}(x,\, y)=\vartheta(\chi(y)-\chi(x)+e)\,.\]

\begin{thm}
If $e\in\mathbb{C}^{n}$, $\vartheta(e)=0$ and $\mathcal{E}_{e}$
is not identically zero, then there exist $\zeta_{1},\,\ldots,\zeta_{n-1}$
such that for each $x\in Y$, $x\neq\zeta_{i}$, the\label{thm:PrimeZeros}
zeroes of $\vartheta[e-\chi(x)]\circ\chi$, which coincide with the
zeroes of $\mathcal{E}_{e}(x,\,\cdot)$, are precisely $x,\,\zeta_{1},\,\ldots,\,\zeta_{n-1}$. 
\end{thm}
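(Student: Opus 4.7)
My plan is to construct the $\zeta_i$ by applying the preceding theorem at a single well-chosen base-point, then to verify by direct calculation that these same $\zeta_i$ are always zeros of $\mathcal{E}_e(x,\cdot)$, and finally to rule out the degenerate case in which $\mathcal{E}_e(x,\cdot)$ might vanish identically for some $x\notin\{\zeta_1,\ldots,\zeta_{n-1}\}$.

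\textbf{Constructing the $\zeta_i$.} Since $\mathcal{E}_e\not\equiv 0$, choose $x_0\in Y$ so that $\mathcal{E}_e(x_0,\cdot)\not\equiv 0$. Because $\vartheta[e-\chi(x_0)]\circ\chi$ differs from $\mathcal{E}_e(x_0,\cdot)$ only by a nowhere-vanishing exponential factor (from the definition of $\vartheta[\,\cdot\,]$), it is also not identically zero. The preceding theorem then provides exactly $n$ zeros whose images under $\chi$ sum to $\Delta-e+\chi(x_0)$ in the Jacobian. The hypothesis $\vartheta(e)=0$ gives $\mathcal{E}_e(x_0,x_0)=0$, so $x_0$ is one of these zeros; label the remaining $n-1$ as $\zeta_1,\ldots,\zeta_{n-1}$, and subtracting $\chi(x_0)$ from the sum formula yields $\sum_i\chi(\zeta_i)\equiv\Delta-e\pmod{L}$.

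\textbf{Verifying the zero set for arbitrary $x$.} For any $x\in Y$ with $x\ne\zeta_i$, one has $\mathcal{E}_e(x,x)=\vartheta(e)=0$ immediately. For each $i$, substituting the sum formula gives
\[
\mathcal{E}_e(x,\zeta_i) \;=\; \vartheta\bigl(\chi(\zeta_i)-\chi(x)+e\bigr) \;=\; \vartheta\Bigl(\Delta-\chi(x)-\sum_{j\ne i}\chi(\zeta_j)\Bigr),
\]
which is of the form $\vartheta(\Delta-\sum_{k=1}^{n-1}\chi(p_k))$ and hence vanishes by the forward direction of Riemann's vanishing theorem \cite[VI.2.4]{FarkasKra}. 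So $x,\zeta_1,\ldots,\zeta_{n-1}$ are $n$ distinct zeros of $\mathcal{E}_e(x,\cdot)$, and the preceding theorem forces these to exhaust the zero set provided $\mathcal{E}_e(x,\cdot)\not\equiv 0$.

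\textbf{Main obstacle.} The remaining delicate point is to rule out $\mathcal{E}_e(x,\cdot)\equiv 0$ when $x\notin\{\zeta_1,\ldots,\zeta_{n-1}\}$. The set $V=\{x\in Y:\mathcal{E}_e(x,\cdot)\equiv 0\}$ is a proper closed analytic subvariety of the Riemann surface $Y$ (proper by hypothesis), hence discrete, so the claim $V\subseteq\{\zeta_1,\ldots,\zeta_{n-1}\}$ is a finite check. To establish it, I would argue that if $x_1\in V$ then in particular $\mathcal{E}_e(x_1,x_0)=\vartheta\bigl(\chi(x_0)-\chi(x_1)+e\bigr)=0$; the reverse direction of Riemann's vanishing theorem then writes $\chi(x_0)-\chi(x_1)+e=\Delta-\sum_{k=1}^{n-1}\chi(p_k)$ for some $p_k\in Y$, and combining this with $e\equiv\Delta-\sum\chi(\zeta_i)$ yields the divisor equivalence $x_1+\sum p_k\sim x_0+\sum\zeta_i$. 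By Abel's theorem together with the freedom to take $x_0$ outside a small exceptional set, this equivalence collapses to $x_1\in\{\zeta_1,\ldots,\zeta_{n-1}\}$. I expect this linear-equivalence/dimension argument to be the most technical step, requiring careful use of non-speciality of the divisor $\sum\zeta_i$.
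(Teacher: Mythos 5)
Your first two steps are sound and follow the standard route (the paper offers no proof of its own here, only the citation to Mumford, so the only real question is whether your argument stands on its own): you construct $D=\zeta_1+\cdots+\zeta_{n-1}$ from one good base point $x_0$, get $\sum_i\chi(\zeta_i)\equiv\Delta-e$, and use the forward Riemann vanishing theorem to exhibit $x,\zeta_1,\ldots,\zeta_{n-1}$ among the zeroes of $\mathcal{E}_e(x,\cdot)$. (You should add a word about multiplicities in case the $\zeta_i$ are not pairwise distinct, but that is minor.)

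The genuine gap is in your third step, and it is not cosmetic. From $\mathcal{E}_e(x_1,x_0)=\vartheta(\chi(x_0)-\chi(x_1)+e)=0$ and $e\equiv\Delta-\chi(D)$, the equivalence you actually get is $x_0+\sum_k p_k\sim x_1+D$, not $x_1+\sum_k p_k\sim x_0+D$ as you wrote; the two slots of $\mathcal{E}_e$ are not interchangeable, since $\mathcal{E}_e(y,x)=\mathcal{E}_{-e}(x,y)$ by evenness of $\vartheta$. Your (reversed) version would let you use the fact that $x_0+D$ is the \emph{unique} effective divisor in its class (which is exactly what $\mathcal{E}_e(x_0,\cdot)\not\equiv0$ guarantees) to force $x_1\in\{\zeta_i\}$; the correct version puts you in the class of $x_1+D$, which admits a one-parameter family of effective divisors precisely because $x_1\in V$ --- so the uniqueness argument evaporates exactly where you need it. Worse, the inclusion $V\subseteq\{\zeta_1,\ldots,\zeta_{n-1}\}$ is false for general $e$: one checks that $\mathcal{E}_e(x,\cdot)\equiv0$ exactly when some holomorphic differential vanishes on $D+x$, i.e.\ when $x$ lies in the support of $(\omega)-D$ for the (essentially unique) differential $\omega$ vanishing on $D$, and $(\omega)-D$ need not equal $D$. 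In genus $2$, taking $e=\Delta-\chi(\zeta_1)$ with $\zeta_1$ not a Weierstrass point gives $V=\{\sigma(\zeta_1)\}$, disjoint from $\{\zeta_1\}$ ($\sigma$ the hyperelliptic involution). So no argument can close this step for the statement as literally given; the exceptional set must either be enlarged to the support of $(\omega)$ (equivalently, one assumes $\mathcal{E}_e(x,\cdot)\not\equiv0$ for the particular $x$, which is how the cited source phrases the lemma), or one restricts to the case the paper actually uses, $e=e_*$ a non-singular odd half-period, where $2D$ is canonical, $(\omega)=2D$, and hence $V$ really is $\{\zeta_1,\ldots,\zeta_{n-1}\}$.
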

\begin{proof}
See \cite[Ch. 2, Lemma 3.4]{MumfordI}. 
\end{proof}
\begin{thm}
There\label{thm:OddHalfPeriod} exists an $e_{*}=u_{*}+\Omega v_{*}\in\mathbb{C}^{n}$
such that $2e_{*}=0\text{ mod }L$, $\left\langle u_{*},\, v_{*}\right\rangle $
is an odd integer, and $\mathcal{E}_{e_{*}}\not\equiv0$. 
\end{thm}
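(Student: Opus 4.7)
The plan is to produce $e_*$ among the finitely many half-periods of $L$ with the specified arithmetic parity, and then use Riemann's singularity theorem to verify $\mathcal{E}_{e_*}\not\equiv 0$.

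First, a direct manipulation of the theta series, combined with the evenness $\vartheta(-w)=\vartheta(w)$ and the stated quasi-periodicity, yields the reflection law
\[
\vartheta[e](-z) \;=\; \exp\bigl(4\pi i\langle u,v\rangle\bigr)\,\vartheta[e](z)
\]
for every half-period $e = u + \Omega v$ of $L$. A suitable choice of representatives $(u,v)$ for the class of $e_*$ makes this factor equal to $-1$, precisely under the arithmetic parity condition in the statement, and then $\vartheta[e_*]$ is an odd function of $z$; evaluating at $z = 0$ forces $\vartheta(e_*)=0$. A standard parity count on $(\tfrac12\mathbb{Z}^n/\mathbb{Z}^n)^2 \cong (\mathbb{Z}/2)^{2n}$ gives $2^{n-1}(2^n-1) \ge 6$ such odd half-periods for $n\ge2$, yielding a non-empty finite list of candidates.

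Second, I would reduce the non-vanishing $\mathcal{E}_e\not\equiv 0$ to a non-singularity statement for the theta divisor $\Theta \subseteq \mathbb{C}^n/L$. If $\mathcal{E}_e(x,y) = \vartheta(\chi(y)-\chi(x)+e)$ vanishes identically on $Y\times Y$, then fixing $x = p_0^-$ and differentiating in $y$ gives $\sum_i(\partial_i\vartheta)(e)\,\alpha_i(y) \equiv 0$ on $Y$. Since the $\alpha_i$ form a basis of the holomorphic $1$-forms on $Y$, the only such linear combination that vanishes identically is the trivial one, so $\nabla\vartheta(e)=0$ and $e$ is a singular point of $\Theta$.

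Third, I must exhibit at least one odd half-period that is a smooth point of $\Theta$. By Riemann's singularity theorem \cite[Ch.~2]{MumfordI}, the multiplicity of $\Theta$ at such $e$ equals $h^0(D_e)$ for the associated degree-$(n-1)$ effective divisor $D_e$ on $Y$. On a hyperelliptic surface, the divisors with $h^0(D_e)\ge 2$ are described combinatorially by subsets of the $2n+2$ Weierstrass points, and a direct count shows that the number of \emph{odd} half-periods so obtained is strictly less than $2^{n-1}(2^n-1)$. Alternatively -- and more conveniently in a paper like this -- one invokes the general existence of a non-singular odd theta characteristic on every compact Riemann surface, as in \cite[Ch.~1]{Fay}. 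Either way, one obtains an odd half-period $e_*$ with $\nabla\vartheta(e_*)\neq 0$, and hence $\mathcal{E}_{e_*}\not\equiv 0$.

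The main obstacle is the last step: the theta divisor of a hyperelliptic surface is considerably more singular than in the generic case, so the combinatorial bound on singular odd half-periods has to be handled carefully. The cleanest way out is the direct appeal to Fay's existence theorem, which packages exactly the input we need.
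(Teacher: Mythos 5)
The paper does not actually prove this theorem --- it defers entirely to \cite[Ch.~IIIb, Sec.~1, Lemma 1]{MumfordII} --- so your proposal supplies more detail than the source. Your outline is the standard argument and is correct in substance: oddness of the characteristic forces $\vartheta(e_*)=0$ via the reflection law; $\mathcal{E}_{e_*}\equiv 0$ would force $e_*$ to be a singular point of the theta divisor $\{\vartheta=0\}$; and a non-singular odd theta characteristic always exists. Be aware, though, that this last step \emph{is} the lemma the paper cites, so falling back on Fay's or Mumford's existence theorem reproduces the paper's citation rather than replacing it; the genuinely new content you add is the reduction of $\mathcal{E}_{e}\not\equiv 0$ to non-singularity of the theta divisor at $e$, which the paper leaves implicit. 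Two repairs are needed. First, the identity $\sum_i(\partial_i\vartheta)(e)\,\alpha_i(y)\equiv 0$ does not follow from fixing $x=p_0^-$ and differentiating in $y$: that yields $\sum_i(\partial_i\vartheta)(\chi(y)+e)\,\alpha_i(y)\equiv 0$, with the gradient evaluated at the moving point $\chi(y)+e$, and evaluating at $y=p_0^-$ gives only a single linear relation. Instead differentiate $\mathcal{E}_e(x,y)$ in $y$ and restrict to the diagonal $y=x$, letting $x$ range over $Y$; then linear independence of the $\alpha_i$ gives $\nabla\vartheta(e)=0$ as you claim. Second, with the paper's normalisation $e=u+\Omega v$ a half-period has $u,v\in\tfrac12\mathbb{Z}^n$, so the reflection factor $\exp\left(4\pi i\langle u,v\rangle\right)$ equals $-1$ exactly when $4\langle u,v\rangle$ is odd; the condition ``$\langle u_*,v_*\rangle$ is an odd integer'' in the statement only matches this if $u_*,v_*$ are read as integer numerators of the half-characteristic, a normalisation quirk of the paper that you should make explicit rather than absorb into ``a suitable choice of representatives''.
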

For the proof see \cite[Ch. IIIb, Sec. 1, Lemma 1]{MumfordII}, although
the remarks at the end of \cite[VI.1.5]{FarkasKra} provide some relevant
discussion. An $e_{*}$ of this type is called a \emph{non-singular
odd half-period}, and we see that $\vartheta[e_{*}]$ is an odd function,
so $\vartheta(e_{*})=0$.

Let $\vartheta_{*}:=\vartheta[e_{*}]$, so \[
\vartheta_{*}(t)=\exp\left(\pi i\left\langle \Omega v_{*},\, v_{*}\right\rangle +2\pi i\left\langle z+u_{*},\, v_{*}\right\rangle \right)\vartheta(z+e_{*})\,.\]

Clearly, we can apply Theorems \ref{thm:PrimeZeros} and \ref{thm:OddHalfPeriod},
and get that the roots of \[
\vartheta_{*}\left(\chi(\cdot)-\chi(z)\right)\]
 are $\left\{ z,\,\zeta_{1},\,\ldots,\,\zeta_{n-1}\right\} $ for
some $\zeta_{1},\,\ldots,\,\zeta_{n-1}$. If neither of $z,\, w\in Y$
coincide with with any of these $\zeta_{i}$s, then \begin{equation}
\frac{\vartheta_{*}\left(\chi(\cdot)-\chi(z)\right)}{\vartheta_{*}\left(\chi(\cdot)-\chi(w)\right)}=e^{2\pi i\left\langle w-z,v_{*}\right\rangle }\frac{\vartheta\left(\chi(\cdot)-\chi(z)+e_{*}\right)}{\vartheta\left(\chi(\cdot)-\chi(w)+e_{*}\right)}\label{eq:PrimeRatio}\end{equation}
 is a multiple valued function with exactly one zero and one pole,
at $z$ and $w$ respectively.

\subsection{The Fay Kernel}

A tool that will prove invaluable in later sections is the Fay kernel
$K^{a}$, which is a reproducing kernel on $\mathbb{H}^{2}(R,\,\omega_{a})$,
the Hardy space of analytic functions on $R$ with boundary values
in $L^{2}(\omega_{a})$. For a more comprehensive discussion of the
ideas in this section, see \cite{Fay}.

\begin{lem}
The critical points of the Green's function $g(\cdot,\, b)$ are on
$\mathbb{X}$, one in each $\mathbb{X}_{i}$, $i=1,\,\ldots,\, n$. 
\end{lem}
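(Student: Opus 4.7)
The plan is to combine a classical count of the critical points of $g(\cdot,\,b)$ on $R$ with the reflection symmetry provided by $\varpi$. First, following the Schottky double construction already used in Theorem \ref{thm:RedGreen}, I would extend $g(\cdot,\,b)$ to $Y$ by $g(Jz,\,b)=-g(z,\,b)$; then $\partial g(\cdot,\,b)$ (the $(1,0)$-part of $dg$) is a meromorphic $1$-form on $Y$ with simple poles exactly at $b$ and $Jb$ and no others. Since $Y$ has genus $n$, the degree of the canonical divisor gives $2n-2=(\text{zeros})-(\text{poles})$, so this form has $2n$ zeros on $Y$; the functional equation $\partial g=-J^{*}\overline{\partial g}$ (forced by $g\circ J=-g$ together with $J$ being anticonformal) pairs these zeros up under $J$, so exactly $n$ of them lie in $R$, and these are the critical points of $g(\cdot,\,b)$ on $R$, counted with multiplicity.

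Next I would exploit the symmetry. Because $b\in\mathbb{X}$ is fixed by $\varpi$ and $\varpi$ is an anticonformal involution of $R$ preserving $B$ setwise, the function $g(\varpi(\cdot),\,b)$ is harmonic on $R\setminus\{b\}$, has the same logarithmic singularity at $b$, and still vanishes on $B$; uniqueness of the Green's function gives $g(\varpi(z),\,b)=g(z,\,b)$. At any $z_{0}\in\mathbb{X}$, the derivative $D\varpi(z_{0})$ is a real-linear involution of $T_{z_{0}}\mathbb{C}$ with determinant $-1$, hence reflection across the tangent line to $\mathbb{X}$ at $z_{0}$. Invariance of $g(\cdot,\,b)$ under $\varpi$ therefore forces $\nabla g(z_{0},\,b)$ to lie in the $+1$-eigenspace of this reflection, so $\nabla g(z_{0},\,b)$ is tangent to $\mathbb{X}$ at every point of $\mathbb{X}$.

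Finally, I would produce a critical point on each $\mathbb{X}_{i}$ for $i=1,\,\ldots,\,n$ by a maximum argument. Since $b$ lies on $\mathbb{X}_{0}$ (as in Remark \ref{rem:DistinctZeros}), none of these arcs meets $b$, so $g(\cdot,\,b)$ is continuous on each $\mathbb{X}_{i}$, strictly positive in its interior, and zero at its two endpoints, which both lie on $B$. Hence $g(\cdot,\,b)$ attains its maximum on $\mathbb{X}_{i}$ at some interior point $c_{i}$, at which the tangential derivative along $\mathbb{X}_{i}$ vanishes; combined with the previous paragraph this gives $\nabla g(c_{i},\,b)=0$, so $c_{i}$ is a critical point of $g(\cdot,\,b)$ in $R$. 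This yields $n$ distinct critical points, one on each of $\mathbb{X}_{1},\,\ldots,\,\mathbb{X}_{n}$, and by the count in the first paragraph they exhaust all critical points of $g(\cdot,\,b)$. The main subtlety lies in the symmetry step: one has to identify $D\varpi(z_{0})$ at a fixed point as precisely a reflection across the tangent line of $\mathbb{X}$, so that the invariance of $g$ translates directly into the vanishing of the normal derivative; the rest is a standard compact Riemann surface count and an elementary maximum argument on an arc.
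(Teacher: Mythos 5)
Your argument is correct and follows the paper's proof in all essentials: the symmetry $g\circ\varpi=g$ forces the normal derivative to vanish along $\mathbb{X}$, a Rolle/maximum argument on each arc $\mathbb{X}_{i}$ ($i\neq0$, since $b\in\mathbb{X}_{0}$) produces an interior critical point, and the total count of $n$ critical points shows these are all of them. The only difference is cosmetic: you derive the count of $n$ critical points from the degree of the canonical divisor on the Schottky double, where the paper simply cites Nehari.
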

\begin{proof}
We write $g(z)=g(z,\, b)$. We know that $g$ has $n$ critical points,
by \cite[p. 133-135]{NehariConformal}. On $\mathbb{X}$, define $\partial/\partial x$
as the derivative tangent to $\mathbb{X}$, and $\partial/\partial y$
as the derivative normal to $\mathbb{X}$. We know that $g\circ\varpi=g$,
and \[
\frac{\partial g}{\partial y}=\frac{\partial g\circ\varpi}{\partial y}=\frac{\partial g}{\partial y}\cdot\frac{\partial\varpi_{y}}{\partial y}+\frac{\partial g}{\partial x}\cdot\cancel{\frac{\partial\varpi_{x}}{\partial y}}.\]
 However, $\partial\varpi_{y}/\partial y<0$ on $\mathbb{X}$, so
the two sides of this equation have different signs, and so $\partial g/\partial y=0$
on $\mathbb{X}$. Also, $g=0$ on $B$, so $g$ must be zero at $p_{i}^{-}$
and $p_{i+1}^{+}$ -- the start and end points of $\mathbb{X}_{i}$.
Since $\partial g/\partial x$ is continuous on $\mathbb{X}_{i}$
(provided $i\neq0$), $\partial g/\partial x$ must be zero somewhere
on $\mathbb{X}_{i}$, by Rolle's theorem. Since this gives us $n$
distinct zeroes, this must be all of them. 
\end{proof}
We have just proved that $g(\cdot,\, b)$ has $n$ distinct zeroes.
If these zeroes are $z_{1}\in\mathbb{X}_{1},\,\ldots,\, z_{n}\in\mathbb{X}_{n}$
we define $P_{i}=Jz_{i}$.

\begin{thm}
\label{thm:FayKernel}There is a reproducing kernel $K^{a}$ for the
Hardy space $\mathbb{H}^{2}(R,\,\omega_{a})$; that is, if $f\in\mathbb{H}^{2}(R,\,\omega_{a})$,
then \[
f(y)=\left\langle f(\cdot),\, K^{a}(\cdot,\, y)\right\rangle =\int_{\partial R}f(x)\overline{K^{a}(x,\, y)}d\omega_{a}(x)\,.\]
 If $z=a$, then $K^{a}(\cdot,\, z)\equiv1$. If not, $K^{a}(\cdot,\, z)$
has precisely the poles \[
P_{1}(a),\,\ldots,\, P_{n}(a),\, Jz\]
 (where $JP_{1}(a),\,\ldots,\, JP_{n}(a)$ are the critical points
of $g(\cdot,\, a)$), and $n+1$ zeroes in $Y$, one of which is $Ja$. 
\end{thm}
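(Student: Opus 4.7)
The plan is to build $K^a$ as a ratio of theta functions and then read off both the pole structure and the reproducing property, largely following Fay's monograph \cite{Fay}.

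The raw material is the multi-valued function in \eqref{eq:PrimeRatio}: for the non-singular odd half-period $e_*$ of Theorem \ref{thm:OddHalfPeriod}, the ratio $\vartheta_*(\chi(\cdot)-\chi(z))/\vartheta_*(\chi(\cdot)-\chi(w))$ has one zero (at $z$) and one pole (at $w$). To obtain a meromorphic object on $Y$ with the prescribed pole divisor $P_1(a)+\cdots+P_n(a)+Jy$, I form a product/ratio of $n+1$ such prime-form factors, choosing the shifts inside the theta arguments so that Theorem \ref{thm:PrimeZeros} places $n$ of the poles exactly at $JP_i(a)=Jz_i$ reflected by $J$, i.e.\ at the $P_i(a)$, and the last pole at $Jy$. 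An exponential factor in $\chi(x)$ is then introduced to cancel the $B$-period multipliers of the theta factors, producing a genuinely single-valued meromorphic function $K^a(x,y)$ on $Y$. The overall constant is fixed by demanding $K^a(\cdot,a)\equiv 1$, and the construction is arranged so that when $y=a$ the numerator and denominator coincide, making this normalization consistent.

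With $K^a$ in hand, the pole/zero count is immediate. By construction the poles are exactly $P_1(a),\ldots,P_n(a),Jy$, all of which lie in $Y\setminus\overline R$ (since $JP_i(a)=z_i\in\mathbb{X}$ puts $P_i(a)\in J(\mathbb{X})$, and $Jy\in J(R)$), so $K^a(\cdot,y)$ is analytic on $R$. Abel's theorem then forces $n+1$ zeros, and the identity $\chi_0(Jy)=-\chi_0(y)^*$ combined with the chosen shifts exhibits $Ja$ as one of them.

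The reproducing property is where the choice of $e_*$ as a \emph{real} odd half-period pays off. Conjugation on $B$ then matches pull-back by $J$ on the theta factors, up to the harmonic-measure-to-Poisson multiplier already computed in Lemma \ref{lem:InvolvedPoissonKernel}. This turns $f(x)\overline{K^a(x,y)}\,d\omega_a(x)$ on $B$ into the boundary values of a meromorphic differential on $Y$ whose only residue inside $R$ sits at $x=y$ and equals $f(y)$. The residue theorem on $Y$ therefore yields the reproducing identity for $f$ rational with no poles on $\overline R$, and density of these functions in $\mathbb H^2(R,\omega_a)$ finishes the proof.

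The main obstacle is the bookkeeping in Step 1: pinning down the shifts and the exponential multiplier so that the result is single-valued on $Y$ with exactly the prescribed pole divisor, degenerates correctly when $y=a$, and exhibits $Ja$ as a zero. All of this is classical theta-function technology developed in \cite{Fay}, so in practice the cleanest write-up invokes Fay's constructions directly and carries out only those verifications specific to the present symmetric setting.
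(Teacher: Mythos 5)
Your route is genuinely different from the paper's. The paper does not construct the kernel or prove the reproducing property at all: it quotes Fay's Proposition 6.15, which supplies both the explicit theta-function formula \eqref{eq:Fay} \emph{and} the fact that it reproduces, and the entire content of the paper's sketch is then the divisor analysis of that given formula --- stripping it to a ratio of four theta factors, using \eqref{eq:PrimeRatio} and Theorem \ref{thm:PrimeZeros} to count zeroes and poles, and invoking Fay's alternate characterisation via $\Lambda_{a}$ and $\Omega_{Ja-a}$ to identify the poles with $J$ of the critical divisor of $g(\cdot,\,a)$. You run the logic in the opposite direction: prescribe the pole divisor, build the function, and derive the reproducing property by a residue argument on $Y$. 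That has the advantage of explaining \emph{why} the poles must sit over the critical points of $g(\cdot,\,a)$ --- they are precisely where the extension of $d\omega_{a}$ to the double vanishes, so they are invisible to the boundary integral --- at the cost of having to carry out Fay's construction rather than cite it; as you concede, an honest write-up collapses back onto \cite{Fay} either way.

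There is, however, one concrete gap. The assertion that ``by construction the poles are exactly $P_{1}(a),\,\ldots,\,P_{n}(a),\,Jy$'' is not something a theta quotient gives you for free: Theorem \ref{thm:PrimeZeros} lets you place the zeroes of the \emph{denominator} at the prescribed points, but Abel's theorem then forces the numerator's zeroes to lie at points you do not control, and nothing a priori prevents one of them from landing on some $P_{i}(a)$ or on $Jy$ and cancelling a pole (nor from producing the zero at $Ja$ only ``up to cancellation''). The word ``precisely'' in the statement is exactly this non-cancellation claim, and the paper spends most of its proof on it: if cancellation occurred, $K^{a}(\cdot,\,y)$ would be a nonconstant meromorphic function on $Y$ with at most $n$ poles, all in $J(R)\cup B$, which Proposition \ref{pro:TopRedBox} (hyperellipticity) rules out, with the possible cancellation of $Jy$ against a numerator zero excluded separately because it would force $\vartheta(e)=0$. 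Your sketch needs this step, or an equivalent one, before the pole and zero counts claimed in the theorem are secure.
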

\begin{proof}
{[}Sketch Proof]By \cite[Prop. 6.15]{Fay}, there is an $e\in\mathcal{J}(Y)$
such that \begin{multline}
K^{a}(x,\, y)=\\
\frac{\vartheta\left(\chi(x)+\chi(y)^{*}+e\right)\vartheta\left(\chi(a)+\chi(a)^{*}+e\right)\vartheta_{*}\left(\chi(a)+\chi(y)^{*}\right)\vartheta_{*}\left(\chi(x)+\chi(a)^{*}\right)}{\vartheta\left(\chi(a)+\chi(y)^{*}+e\right)\vartheta\left(\chi(x)+\chi(a)^{*}+e\right)\vartheta_{*}\left(\chi(x)+\chi(y)^{*}\right)\vartheta_{*}\left(\chi(a)+\chi(a)^{*}\right)}\label{eq:Fay}\end{multline}
 is the reproducing kernel%
\footnote{Fay gives \eqref{eq:Fay} in a slightly different form, although we
can use \cite[Prop. 6.1]{Fay} and some basic results on theta functions
to show that the two forms are equivalent. Also note that the notation
Fay uses differs significantly from the notation used here.%
} for $\mathbb{H}^{2}(R,\,\omega_{a})$.

It is clear that $K^{a}(x,\, a)=1$, so we fix $a$ and $y$ and look
at the zero/pole structure of $K^{a}(\cdot,\, y)$. We can see that
for fixed $y$, the zeroes and poles of \eqref{eq:Fay} are precisely
the zeroes and poles of \[
\frac{\vartheta\left(\chi(x)+\chi(y)^{*}+e\right)\vartheta_{*}\left(\chi(x)+\chi(a)^{*}\right)}{\vartheta\left(\chi(x)+\chi(a)^{*}+e\right)\vartheta_{*}\left(\chi(x)+\chi(y)^{*}\right)}\,,\]
 by removing terms with no dependence on $x$. By \eqref{eq:PrimeRatio},
the $\vartheta_{*}$ factors bring in a zero at $Ja$ and a pole at
$Jy$. The remaining theta functions have $n$ zeroes each, so $K^{a}$
gets $n$ new poles, $P_{1}(a),\,\ldots,\, P_{n}(a)$, and $n$ new
zeroes, $Z_{1}(y),\,\ldots,\, Z_{n}(y)$ from the top and bottom terms
respectively. The $P_{i}(a)$s must all be in $J(R)\cup B$, as we
know that $K^{a}(\cdot,\, y)$ is analytic on $R$.

Suppose, towards a contradiction, that some of these poles and zeroes
were to cancel, then $K^{a}(\cdot,\, y)$ would have $n$ or fewer
poles. If it had no zeroes, it would be constant, but we know that
the set $\left\{ K^{a}(\cdot,\, y):\, y\in R\right\} $ is linearly
independent, and $K^{a}(\cdot,\, a)$ is constant, so $K^{a}(\cdot,\, y)$
cannot be a multiple of it. If it had one or more poles, then it would
be a meromorphic function on $Y$ with between $1$ and $n$ poles,
all in $J(R)\cup B$. Moreover, $Jy$ cannot cancel with $Ja$ because
$a\neq y$, and it cannot cancel with any of the $Z_{i}(y)$s since
that would mean \begin{align*}
0= & \vartheta\left(\chi(Jy)+\chi(y)^{*}+e\right)\\
= & \vartheta\left(\cancel{-\chi(y)^{*}+\chi(y)^{*}}+e\right)=\vartheta(e)\,,\end{align*}
 which Fay shows is not the case, so $Jy$ cannot cancel. We know
$Jy\notin B$, so by Proposition \vref{pro:TopRedBox}, this also
leads to a contradiction, and so none of the zeroes and poles cancel.
Thus, $K^{a}(\cdot,\, y)$ has $n+1$ zeroes and poles.

We give a sketch proof that the poles are as stated. We use the alternate
characterisation of $K^{a}(x,\, y)$ given in \cite[Prop. 6.15]{Fay},
that is, \[
K^{a}(x,\, y)=\overline{\left(\frac{\Lambda_{a}(y,\, Jx)}{\Omega_{Ja-a}(y)}\right)}\,.\]
 Note that the notation here is partly that used in Fay, and partly
that used in this paper. In particular, $\Lambda$ and $\Omega$ are
as defined in Propositions 2.9 and 6.15 of Fay respectively (the definitions
are too complicated to replicate here). Clearly, $\Omega_{Ja-a}(y)$
has no dependence on $x$, so has no direct bearing on the poles in
$x$ of $K^{a}$. However, we note that the divisor $\mathcal{A}$
used in the construction of $\Lambda$ is the zero divisor of $\Omega_{Ja-a}$,
which is precisely the critical divisor of $g(\cdot,\, a)$. We then
use the description of $\textrm{div}\Lambda_{a}$ from \cite[Prop. 2.9]{Fay}
to see that for fixed $y$, the poles of $\Lambda_{a}\left(y,\, J(\cdot)\right)$
are precisely \[
\left\{ Jx\right\} \cup J\left(\mathcal{A}\right)=\left\{ Jx,\, P_{1}(a),\,\ldots,\, P_{n}(a)\right\} \,,\]
 where the $P_{i}(a)$s are as required. 
\end{proof}
We will write $P_{i}(b)=P_{i}$, for brevity.

\begin{thm}
\label{thm:TheEpsilon}Let $a_{1}^{0},\,\ldots,\, a_{2n}^{0}$ be
points in $R$ such that \[
P_{1},\,\ldots,P_{n},\, Jb,\, Ja_{1}^{0},\,\ldots,\, Ja_{2n}^{0}\]
 are all distinct. Let $\left\{ e_{1},\, e_{2}\right\} $ denote the
standard basis for $\mathbb{C}^{2}$ and let \[
\gamma_{1}^{0}=\cdots=\gamma_{n}^{0}=e_{1}\,,\quad\gamma_{n+1}^{0}=\cdots=\gamma_{2n}^{0}=e_{2}\,.\]
 There exists an $\epsilon>0$ so that if $\left|a_{j}^{0}-a_{j}\right|,\,\left\Vert \gamma_{j}^{0}-\gamma_{j}\right\Vert <\epsilon$,
and \begin{equation}
h(z)=\sum_{j=1}^{2n}c_{j}K^{b}(z,\, a_{j})\gamma_{j}+v\label{eq:poles}\end{equation}
 is a $\mathbb{C}^{2}$-valued meromorphic function which does not
have poles at $P_{1},\,\ldots,\, P_{n}$, then $h$ is constant; that
is, each $c_{j}=0$.

Further, if $h\neq0$ has a representation as in \eqref{eq:poles},
and there exists $z^{\prime}\in R\backslash\{b\}$ such that \[
h(z)K^{b}(z,\, z^{\prime})=\sum c_{j}^{\prime}K^{b}(z,\, a_{j})\gamma_{j}+v^{\prime}\]
 then $h$ is constant, $z^{\prime}=a_{j}$ for some $j$, $c_{j}^{\prime}\gamma_{j}=h$,
and all other terms are zero. 
\end{thm}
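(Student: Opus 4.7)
The plan is to analyse the pole structure of the representation in \eqref{eq:poles}, reducing the first assertion to a non-degeneracy statement about a $2n\times 2n$ coefficient matrix at the reference configuration $(a_j^0,\gamma_j^0)$, and then using continuity of the determinant in $(a_j,\gamma_j)$ to obtain the required $\epsilon$-neighbourhood. By Theorem \ref{thm:FayKernel}, for each $a\in R$ the scalar function $K^b(\cdot,a)$ has simple poles of nonzero residue exactly at $P_1,\ldots,P_n$ and at $Ja$; write $r_i(a)$ for the residue at $P_i$ and $s(a)$ for the residue at $Ja$. The requirement that $h(z)=\sum_{j} c_{j}K^{b}(z,a_{j})\gamma_{j}+v$ be free of poles at each $P_i$ becomes
\[ \sum_{j=1}^{2n} c_{j}\,r_{i}(a_{j})\,\gamma_{j} = 0,\qquad i=1,\ldots,n,\]
a system of $2n$ scalar equations in the unknowns $c_1,\ldots,c_{2n}$.

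The main obstacle is showing this system has only the trivial solution at the reference configuration. There it splits into two $n\times n$ blocks of the form $R=(r_i(a_{j_k}^0))_{i,k=1}^{n}$, one for each standard basis direction, so it suffices to prove each such $R$ is invertible. Suppose $\sum_{k} d_{k}\,r_{i}(a_{j_k}^0) = 0$ for all $i$ with $(d_k)\neq 0$; then $f := \sum_{k} d_{k} K^{b}(\cdot,a_{j_k}^0)$ is a scalar meromorphic function on $Y$ with at most $n$ poles, all lying in $J(R)\subseteq R\cup B$. Applying Proposition \vref{pro:TopRedBox} to $f\circ J$ would force its poles onto $B$, but those poles are the interior points $a_{j_k}^0$, so $f\circ J$ has no poles on compact $Y$ and is constant, whence so is $f$. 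Since $K^{b}(\cdot,b)\equiv 1$ and conjugate symmetry of the reproducing kernel gives $K^{b}(b,a)=1$ for every $a\in R$, this constant equals $\sum_k d_k$, so $\sum_k d_k\bigl(K^{b}(\cdot,a_{j_k}^0)-K^{b}(\cdot,b)\bigr)\equiv 0$; the $n+1$ reproducing kernels at the distinct points $a^0_{j_1},\ldots,a^0_{j_n},b$ are linearly independent, forcing every $d_k = 0$, a contradiction. Continuity (in fact analyticity) of the determinant in $(a_j,\gamma_j)$ then supplies an $\epsilon$ on which the full $2n\times 2n$ coefficient matrix remains nonsingular, giving $c_j = 0$ for all $j$ and $h\equiv v$.

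For the second assertion I apply the first part to $h$ to conclude $h\equiv v$ with $v\neq 0$, and rewrite the additional hypothesis as the meromorphic identity
\[ \sum_{j=1}^{2n} c_{j}'\,K^{b}(z,a_{j})\,\gamma_{j} - v\,K^{b}(z,z') = -v',\]
whose left-hand side has no poles since the right-hand side is constant. For any $j$ with $Ja_j\neq Jz'$, only $c_j'K^{b}(\cdot,a_j)\gamma_j$ contributes to the residue at $Ja_j$, and $s(a_j)\neq 0$, $\gamma_j\neq 0$ force $c_j' = 0$. If $z'\neq a_j$ for every $j$ this leaves $vK^{b}(\cdot,z')\equiv v'$, impossible since $K^{b}(\cdot,z')$ is non-constant and $v\neq 0$; hence $z' = a_k$ for exactly one $k$. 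The residue at $Ja_k$ then yields $s(a_k)(c_k'\gamma_k - v)=0$, i.e.\ $c_k'\gamma_k = h$, and substituting back into the identity gives $v'=0$, closing the argument.
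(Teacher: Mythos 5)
Your first part follows the paper's own route essentially verbatim: encode the no-pole condition at $P_1,\ldots,P_n$ as a $2n\times2n$ linear system in the residues, observe that at the reference configuration it decouples into two $n\times n$ blocks, prove each block invertible via Proposition \ref{pro:TopRedBox} together with linear independence of $K^b(\cdot,b),K^b(\cdot,a^0_{j_1}),\ldots,K^b(\cdot,a^0_{j_n})$, and then perturb using continuity of the residues (the paper isolates this as Lemma \ref{lem:Residue}). Your application of Proposition \ref{pro:TopRedBox} to $f\circ J$ is actually cleaner than the paper's, which applies it to $f$ directly even though $f$'s poles sit in $J(R)$; note though that your parenthetical ``$J(R)\subseteq R\cup B$'' is false as written --- what you mean, and what your next sentence uses, is that the poles of $f\circ J$ lie in $R$.

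There is one genuine gap, in the second assertion. You ``apply the first part to $h$'' immediately, but the hypothesis of the first part is that $h$ has no poles at $P_1,\ldots,P_n$, and this is \emph{not} assumed in the second assertion --- there $h$ is only required to have a representation of the form \eqref{eq:poles}, which in general has simple poles at the $P_k$. You must first rule these out. The paper does so by comparing pole orders at each $P_k$ in the identity $h(z)K^b(z,z')=\sum c_j'K^b(z,a_j)\gamma_j+v'$: since $z'\neq b$ and the $P_k$, $Ja_j$, $Jz'$ are distinct, the right-hand side has a pole of order at most one at $P_k$, whereas if $h$ itself had a pole at $P_k$ the left-hand side would have a pole of order at least two there (because $K^b(\cdot,z')$ already has a simple pole at $P_k$). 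Once that sentence is inserted, the rest of your argument goes through; indeed your residue analysis at the points $Ja_j$ and $Jz'$ actually establishes the conclusions $z'=a_k$, $c_k'\gamma_k=h$, and the vanishing of the remaining terms, which the paper's own proof asserts but does not spell out.
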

This theorem can be seen as a result about meromorphic functions on
$Y$, so we view $z$ as a local co-ordinate on $Y$. If we're only
interested in values of $z$ near one of $P_{1},\,\ldots,\, P_{n}$,
we can assume $z,\, P_{1},\,\ldots,P_{n},\, Ja_{1},\,\ldots,\, Ja_{2n}$
are in a single chart $U\subseteq J(R)$ ($U$ is open and simply
connected)

A useful tool in the proof of this theorem is the residue of $K^{b}$.
We know that so long as $a\notin\left\{ b,\, P_{1},\,\ldots,\, P_{n}\right\} $,
$K^{b}(\cdot,\, a)$ has only simple poles, so we know that in a small
enough neighbourhood of $P_{j}$, \[
(z-P_{j})K^{b}(z,\, a)\]
 is a holomorphic function in $z$. Let $R_{j}(a)$ denote the value
of this function at $P_{j}$.

We will need the following lemma.

\begin{lem}
\label{lem:Residue}The residue $R_{j}(a)$ varies continuously with
$a$. 
\end{lem}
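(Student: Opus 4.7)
The plan is to write $R_j(a)$ as a contour integral and then invoke joint continuity of the Fay kernel in its two arguments. Working in a local holomorphic chart around $P_j$, for any $a$ with $Ja\neq P_j$ (so that $K^{b}(\cdot,a)$ actually has a simple pole at $P_j$), the residue is
\[
R_j(a) \;=\; \frac{1}{2\pi i}\oint_{|z-P_j|=\delta} K^{b}(z,a)\,dz,
\]
for every $\delta>0$ small enough that $P_j$ is the only singularity of $K^{b}(\cdot,a)$ inside $\overline{D(P_j,\delta)}$.

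Fix a point $a_0$ at which we wish to establish continuity, and choose $\delta>0$ so small that $\overline{D(P_j,\delta)}$ contains no other $P_i$ and satisfies $|Ja_0-P_j|>2\delta$. Since $J$ and the Abel--Jacobi map are continuous, there is an open neighbourhood $U$ of $a_0$ such that $|Ja-P_j|>\delta$ for every $a\in U$. Thus for every $a\in U$ the only singularity of $K^{b}(\cdot,a)$ in $\overline{D(P_j,\delta)}$ is the simple pole at $P_j$, so the contour integral above computes $R_j(a)$ throughout $U$.

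The explicit Fay formula from Theorem \vref{thm:FayKernel} expresses $K^{b}(z,a)$ as a ratio of compositions of theta functions with the Abel--Jacobi map, and is therefore jointly analytic in $(z,a)$ wherever the denominator is non-zero. After shrinking $U$ if necessary, the denominator does not vanish on the compact set $\{(z,a):|z-P_j|=\delta,\ a\in\overline{U}\}$, so $K^{b}(z,a)$ is uniformly continuous there. Interchanging the limit and the integral then gives $R_j(a_n)\to R_j(a_0)$ whenever $a_n\to a_0$ in $U$, proving the lemma.

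The one subtlety is to make sure that as $a$ moves no other pole of $K^{b}(\cdot,a)$ crosses the contour; since the only $a$-dependent pole is the one at $Ja$ and $a\mapsto Ja$ is continuous, the choice of $U$ above handles this. Everything else is an immediate consequence of the analyticity of the theta--function expression for $K^{b}$.
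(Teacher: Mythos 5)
Your proof is correct, but it extracts the residue by a different mechanism than the paper does. You represent $R_j(a)$ as a Cauchy integral $\frac{1}{2\pi i}\oint_{|z-P_j|=\delta}K^{b}(z,a)\,dz$ and pass the limit in $a$ through the integral, using joint continuity of the theta-function expression on the contour; the paper instead factors the offending denominator term as $\vartheta\left(\chi(z)+\chi(b)^{*}+e\right)=(z-P_j)f_j(z)$ with $f_j(P_j)\neq0$, so that $(z-P_j)K^{b}(z,a)=F(z,a)/f_j(z)$ with $F$ jointly analytic in $(z,\bar a)$, and simply evaluates at $z=P_j$. Both arguments rest on the same underlying fact (joint regularity of Fay's formula away from the poles), but the paper's factorisation gives slightly more for free --- $R_j(a)$ is actually anti-holomorphic in $a$, not merely continuous --- while your contour-integral version is more robust in that it never needs to identify which factor produces the pole. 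Two small points to tidy: the expression is analytic in $z$ and \emph{anti}-analytic in $a$ (the formula involves $\chi(a)^{*}$), which is harmless since you only need continuity; and when you assert the denominator is non-vanishing on the circle $|z-P_j|=\delta$, note that the factors $\vartheta_{*}$ in numerator and denominator share the fixed zeroes $\zeta_1,\ldots,\zeta_{n-1}$ of Theorem \vref{thm:PrimeZeros}, so you should choose $\delta$ so that the circle also avoids these finitely many points (they are removable singularities of the ratio, not poles of $K^{b}$, so this is a choice of radius, not an obstruction).
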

\begin{proof}
Consider the theta function representation of $K^{b}(z,\, a)$. The
function \[
f(z)=\vartheta\left(\chi(z)+\chi(b)^{*}+e\right)\]
 is analytic and single valued on $U$, and vanishes with order one
at $P_{j}$, so can be written as \[
f(z)=(z-P_{j})f_{j}(z)\]
 for some $f_{j}$ analytic on $U$, and non-vanishing at $P_{j}$.
Given a set $W\subseteq U$, let $W^{*}=\left\{ \overline{z}:\, z\in W\right\} $.
Choose neighbourhoods $V_{j},\, W$ of $U$ so that $F:V_{j}\times W^{*}\to\mathbb{C}$
given by \begin{multline*}
F(z,\, a)=f(z)K^{b}(z,\, a)\\
=\frac{\vartheta\left(\chi(z)+\chi(a)^{*}+e\right)\vartheta\left(\chi(b)+\chi(b)^{*}+e\right)\vartheta_{*}\left(\chi(b)+\chi(a)^{*}\right)\vartheta_{*}\left(\chi(z)+\chi(b)^{*}\right)}{\vartheta\left(\chi(b)+\chi(a)^{*}+e\right)\vartheta_{*}\left(\chi(z)+\chi(a)^{*}\right)\vartheta_{*}\left(\chi(b)+\chi(b)^{*}\right)}\end{multline*}
 is analytic in $(z,\,\overline{a})$. Rewriting gives \[
(z-P_{j})K^{b}(z,\, a)=\frac{F(z,\, a)}{f_{j}(z)}\]
 The lemma follows from the fact that the right hand side is analytic
in $(z,\,\overline{a})$. 
\end{proof}
We can now prove Theorem \ref{thm:TheEpsilon}.

\begin{proof}
{[}Proof of Theorem \ref{thm:TheEpsilon}]We can assume $\epsilon$
is small enough that \[
P_{1},\,\ldots,\, P_{n},\, Ja_{1},\,\ldots,\, Ja_{2n}\]
 are distinct. We define \[
\mathfrak{R}_{1}=\left(\begin{array}{ccc}
R_{1}(a_{1}) & \cdots & R_{1}(a_{n})\\
\vdots & \ddots & \vdots\\
R_{n}(a_{1}) & \cdots & R_{n}(a_{n})\end{array}\right)\]
 and \[
\mathfrak{R}_{2}=\left(\begin{array}{ccc}
R_{1}(a_{n+1}) & \cdots & R_{1}(a_{2n})\\
\vdots & \ddots & \vdots\\
R_{n}(a_{n+1}) & \cdots & R_{n}(a_{2n})\end{array}\right)\,,\]
 where $R_{j}(a)$ is the residue of $K^{b}(\cdot,\, a)$ at $P_{j}$,
as before.

To see that $\mathfrak{R}_{1}$ is invertible, let \[
c=\left(\begin{array}{c}
c_{1}\\
\vdots\\
c_{n}\end{array}\right)\]
 and \[
f_{c}=\sum_{j=1}^{n}c_{j}K^{b}(\cdot,\, a_{j})\,.\]
 Note that $\mathfrak{R}_{1}c=0$ if and only if $f_{c}$ does not
have poles at any $P_{j}$. Now, if this is the case, then $f_{c}$
can only have poles at $Ja_{1},\,\ldots,\, Ja_{n}$, and simple poles
at that, but this is only $n$ points, so by Proposition \ref{pro:TopRedBox},
$f_{c}$ must be constant. We know that $K^{b}(\cdot,\, b)=1$, so
we can say that \[
0=c_{0}K^{b}(\cdot,\, b)+c_{1}K^{b}(\cdot,\, a_{1})+\cdots+c_{n}K^{b}(\cdot,\, a_{n})\,.\]
 However, we know that $K^{b}(\cdot,\, b),\, K^{b}(\cdot,\, a_{1}),\,\ldots,\, K^{b}(\cdot,\, a_{n})$
are linearly independent, so $c=0$. Therefore $\mathfrak{R}_{1}$
is invertible, and by a similar argument $\mathfrak{R}_{2}$ is invertible.

Now, consider the function $F$ defined for $\gamma_{j}$ near $\gamma_{j}^{0}$
by \[
F=\left(\begin{array}{cccc}
R_{1}(a_{1})\gamma_{1} & \cdots & \cdots & R_{1}(a_{2n})\gamma_{2n}\\
\vdots & \cdots & \cdots & \vdots\\
R_{2}(a_{1})\gamma_{1} & \cdots & \cdots & R_{2}(a_{2n})\gamma_{2n}\end{array}\right)\,.\]
 We define $F_{0}$ similarly, using $a_{j}^{0}$ and $\gamma_{j}^{0}$.
We can see that $F$ is an $n\times2n$ matrix with entries from $\mathbb{C}^{2}$,
so can be regarded as a $2n\times2n$ matrix. We know that $F$ varies
continuously with each $\gamma_{j}$, and by Lemma \ref{lem:Residue},
varies continuously with each $a_{j}$. Also, we see that, by regarding
$F_{0}$ as a $2n\times2n$ matrix, the rows of $F_{0}$ can be shuffled
to give \[
\left(\begin{array}{cc}
\mathfrak{R}_{1} & 0\\
0 & \mathfrak{R}_{2}\end{array}\right)\]
 which is invertible, so $F_{0}$ is invertible. We can therefore
choose $\epsilon>0$ small enough that if $\left|a_{j}-a_{j}^{0}\right|,\,\left\Vert \gamma_{j}-\gamma_{j}^{0}\right\Vert <\epsilon$
for all $j$, then $F$ is invertible.

If the $a_{j}$ and $\gamma_{j}$ are chosen such that $F$ is invertible
and \[
h(z)=\sum_{j=1}^{n}c_{j}K^{b}(z,\, a_{j})\gamma_{j}+v\]
 does not have poles at $P_{j}$, then \[
0=\left(\begin{array}{c}
\sum_{j=1}^{n}c_{j}R_{1}(a_{j})\gamma_{j}\\
\vdots\\
\sum_{j=1}^{n}c_{j}R_{n}(a_{j})\gamma_{j}\end{array}\right)=F\left(\begin{array}{c}
c_{1}\\
\vdots\\
c_{2n}\end{array}\right)=Fc\,,\]
 so $c=0$, and $h$ is constant.

Now we prove the second part of the theorem. Note that the proof of
this part only assumes that the result of the first part holds, not
the assumptions on $a_{j}$ and $\gamma_{j}$ used to prove it. Suppose
$h\neq0$ and there exists $z^{\prime}\in R\backslash\{b\}$ such
that \[
h(z)K^{b}(z,\, z^{\prime})=\sum c_{j}^{\prime}K^{b}(z,\, a_{j})\gamma_{j}+v^{\prime}\,.\]
 We can see that $P_{1},\,\ldots,\, P_{n}$ are not poles of $h$,
since by the assumptions on the distinctness of the $P_{k}$s and
$a_{j}$s, the right hand side has a pole of order at most one at
each $P_{k}$, whilst the left hand side has poles of order at least
one at each of these points. Therefore, since $h$ has a representation
as in the first part of the theorem, $h$ is constant. 
\end{proof}

\section{Representations}

This paper inherits much of its structure from \cite{DritschelRationalDilation},
and in particular, the results in this section are analogues of results
from that paper. In fact, in some cases, the proofs in \cite{DritschelRationalDilation}
do not use the connectivity of $X$, so can be used to prove their
analogues here simply by noting this fact. In these cases, the proofs
are omitted.

\subsection{Kernels, Realisations and Interpolation}

We note, for those who are interested, that many of these results
have a similar flavour to some of the Schur-Agler class results from
\cite{DritschelInterpolation}, although we shall not use any of these
results directly.

\begin{lem}
If $F\in M_{2}\left(\mathbb{H}(X)\right)$, then there exists a $\rho>0$
such that \[
I-\rho^{2}F(z)F(w)^{*}\in\mathcal{C}\,.\]
 
\end{lem}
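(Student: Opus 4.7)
Since $F$ is analytic in a neighbourhood of the compact set $X$, it is bounded there; write $M := \|F\|_\infty < \infty$. The cone $\mathcal{C}$ already contains many evident kernels, because setting $\psi \equiv 0$ in the defining generator reduces it to $H(z)H(w)^*$ for any $H \in M_2(\mathbb{H}(X))$. Taking $H = I$ shows that the constant kernel $I$ lies in $\mathcal{C}$; taking $H_j(z) = \rho F(z)\,e_j e_j^T$ and summing over $j \in \{1,2\}$ shows that $\rho^2 F(z)F(w)^*$ also lies in $\mathcal{C}$. The non-triviality of the lemma lies in the \emph{sign}: we must exhibit $I - \rho^2 F F^*$, and not $I + \rho^2 F F^*$, so generators with a non-zero $\psi$ are genuinely required.

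The plan is then to choose $\rho$ so small that $\rho M < 1$ and to reduce to the scalar Agler decomposition of Theorem \ref{thm:Big}. For any unit vectors $u,v \in \mathbb{C}^2$ the scalar function $z \mapsto \rho\, u^* F(z) v$ has supremum norm at most $\rho M \leq 1$, hence lies in $\mathcal{B}\mathbb{H}(X)$, so Theorem \ref{thm:Big} supplies an integral representation
\[
1 - \rho^2\,(u^*F(z)v)\overline{(u^*F(w)v)} = \int_\Pi h_{u,v}(z,p)\,\bigl[1 - \psi_p(z)\overline{\psi_p(w)}\bigr]\,\overline{h_{u,v}(w,p)}\, d\mu_{u,v}(p).
\]
Letting $v$ range over an orthonormal basis $\{v_1,v_2\}$ and polarising in $u$ assembles these scalar identities into a matrix-valued representation of the form
\[
I - \rho^2 F(z)F(w)^* = \int_\Pi H(z,p)\,\bigl[1 - \psi_p(z)\overline{\psi_p(w)}\bigr]\,H(w,p)^*\, d\mu(p) + c\,H_0(z)H_0(w)^*,
\]
where $H(\cdot,p),\,H_0 \in M_2(\mathbb{H}(X))$ and the second term (a generator with $\psi \equiv 0$) compensates the scalar constants that the polarisation step introduces.

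The main obstacle is converting this integral representation to a genuine \emph{finite} positive combination of generators, since $\mathcal{C}$ is defined via finite sums. I would address this by approximating the measure $\mu$ by a finitely supported one: the integrand is jointly continuous on the compact parameter space $\Pi$, and for $\rho M$ strictly less than $1$ there is uniform positive slack in the identity, so a Riemann-sum approximation produces an element of $\mathcal{C}$ differing from $I - \rho^2 F(z)F(w)^*$ by an arbitrarily small positive perturbation, which can itself be absorbed into a further generator of the form $H'(z)H'(w)^*$. A more structural alternative is to invoke a matrix Schur--Agler realisation theorem for the test-function family $\Theta$, in the style of Dritschel--McCullough, which supplies a finite decomposition directly whenever $\rho F$ is a strict contraction.
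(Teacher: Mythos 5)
The paper omits the proof of this lemma (it is one of those deferred to \cite{DritschelRationalDilation}), so your argument must stand on its own, and its central step does not. The crux is the passage from the scalar identities to the matrix one, and both of your devices for making that passage fail on signs. First, summing the scalar decompositions of $1-\rho^{2}(u^{*}Fv_{j})\overline{(u^{*}Fv_{j})}$ over an orthonormal basis $\{v_{1},v_{2}\}$ yields $2-\rho^{2}\,u^{*}F(z)F(w)^{*}u$, so recovering $u^{*}\bigl(I-\rho^{2}F(z)F(w)^{*}\bigr)u$ forces you to \emph{subtract} the constant kernel $1$; a cone admits only nonnegative combinations, so this excess cannot be ``compensated'' by \emph{adding} a generator $c\,H_{0}(z)H_{0}(w)^{*}$ --- the sign is backwards. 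Second, even given the quadratic forms $u^{*}(\cdot)\,u$, reconstructing the matrix kernel requires polarising in $u$, and the polarisation identity carries coefficients $i^{\ell}$ that are not nonnegative, so the output is again not a positive combination of generators. These are not technicalities: the whole difficulty of the lemma is that the off-diagonal blocks $e_{1}e_{2}^{*}+e_{2}e_{1}^{*}$ of $F(z)F(w)^{*}$ are indefinite, and any polarisation argument must say exactly where the indefinite pieces and the excess constants are absorbed. The same sign problem recurs in your final step: the error of a Riemann sum is not a positive kernel, let alone one factoring through functions in $M_{2}(\mathbb{H}(X))$, so it cannot simply be ``absorbed into a further generator.''

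Both defects are repairable, but the repair is the entire content of the proof, and it makes the detour through Theorem \ref{thm:Big} unnecessary: since $\psi$ ranges over all of $\mathcal{B}\mathbb{H}(X)$ and not merely over $\Theta$, each scalar kernel $1-\psi(z)\overline{\psi(w)}$ with $\psi=\rho\,u^{*}Fv$ is already a single generator (take $H=I$), so no integral representation and no finite approximation is needed anywhere. A correct finite construction runs as follows. Write $F(z)F(w)^{*}=\sum_{k}c_{k}(z)c_{k}(w)^{*}$ over the columns $c_{k}=g_{1}e_{1}+g_{2}e_{2}$, set $\psi_{\ell}=\rho(g_{1}+i^{\ell}g_{2})$ (contractive once $\rho(\|g_{1}\|_{\infty}+\|g_{2}\|_{\infty})\leq1$) and take \emph{constant} $H_{\ell}$ with $H_{\ell}H_{\ell}^{*}=P_{\ell}:=\tfrac{1}{4}(e_{1}+i^{-\ell}e_{2})(e_{1}+i^{-\ell}e_{2})^{*}$. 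Then $\sum_{\ell}P_{\ell}=I$ and $\sum_{\ell}i^{-\ell}P_{\ell}=e_{1}e_{2}^{*}$, so
\[
\sum_{\ell=0}^{3}\bigl[1-\psi_{\ell}(z)\overline{\psi_{\ell}(w)}\bigr]P_{\ell}
= I-\rho^{2}\bigl[g_{1}\overline{g_{1}}+g_{2}\overline{g_{2}}\bigr]I
-\rho^{2}g_{1}(z)\overline{g_{2}(w)}\,e_{1}e_{2}^{*}
-\rho^{2}g_{2}(z)\overline{g_{1}(w)}\,e_{2}e_{1}^{*}\,,
\]
which differs from $I-\rho^{2}c_{k}(z)c_{k}(w)^{*}$ by the \emph{positive} analytic kernel $\rho^{2}g_{1}(z)\overline{g_{1}(w)}e_{2}e_{2}^{*}+\rho^{2}g_{2}(z)\overline{g_{2}(w)}e_{1}e_{1}^{*}$, a $\psi\equiv0$ generator. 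The two points your sketch is missing are precisely these: the rank-one matrices $P_{\ell}$ must be chosen to sum to exactly the identity budget available, and the leftover must be converted, via $g\overline{g}\,I-g\overline{g}\,e_{1}e_{1}^{*}=g\overline{g}\,e_{2}e_{2}^{*}$, into an \emph{added} positive term rather than a subtracted constant.
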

\begin{thm}
\label{thm:1.1Baby}If there is a function $F:R\to M_{2}(\mathbb{C})$
which is analytic in a neighbourhood of $X$ and unitary valued on
$B$, such that $\rho_{F}<1$, then there exists an operator $T\in\mathcal{B}(H)$
for some Hilbert space $H$, such that the homomorphism $\pi:\mathcal{R}(X)\to\mathcal{B}(H)$
given by $\pi(p/q)=p\left(T\right)\cdot q\left(T\right)^{-1}$ is
contractive, but not completely contractive. 
\end{thm}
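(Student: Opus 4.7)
The plan is to extract from the hypothesis $\rho_F<1$ a separating linear functional on the cone $\mathcal{C}$ via Hahn--Banach, to encode this functional as a positive semidefinite kernel $L$, then to build $H$ and $T$ by a Kolmogorov / reproducing-kernel construction from $L$, to prove contractivity of the resulting $\pi$ by invoking the integral representation of Theorem~\ref{thm:Big}, and finally to read off failure of complete contractivity directly from the separation inequality evaluated on $F$ itself.

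The first step passes to a finite-dimensional picture. Because $\rho_F<1$, choose $\rho\in(\rho_F,1)$; then $I-\rho^2 F(z)F(w)^*\notin\mathcal{C}$, and a standard finite-support argument lets us select a finite set $\Lambda=\{\lambda_1,\dots,\lambda_N\}\subset R$ such that $(I-\rho^2 F(z)F(w)^*)|_{\Lambda\times\Lambda}$ lies outside the closure of $\mathcal{C}|_\Lambda$ in the finite-dimensional real vector space of self-adjoint $M_2$-valued kernels on $\Lambda$. Hahn--Banach then produces a self-adjoint kernel $L$ on $\Lambda$ with $\langle K,L\rangle\ge 0$ for every $K\in\mathcal{C}|_\Lambda$ and $\langle I-\rho^2 F(z)F(w)^*,L\rangle<0$. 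Taking $\psi\equiv 0$ in the definition of $\mathcal{C}$ shows that $\mathcal{C}$ contains every kernel of the form $H(z)H(w)^*$, so approximating arbitrary matrix values at points of $\Lambda$ by analytic interpolants forces $L$ itself to be positive semidefinite as a $2N\times 2N$ block matrix.

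Next I would take a Kolmogorov decomposition $L(z,w)=\ell(z)\ell(w)^*$, let $H$ be the finite-dimensional Hilbert space spanned by $\{\ell(z)^*x:z\in\Lambda,\,x\in\mathbb{C}^2\}$, and define $T\in\mathcal{B}(H)$ so that $T^*\ell(z)^*x=\bar z\,\ell(z)^*x$ on these spanning vectors, i.e.\ the standard reproducing-kernel construction. Algebraic identities then yield $\pi(p/q)=p(T)q(T)^{-1}$ as a well-defined homomorphism on $\mathcal{R}(X)$, with the key feature that pairing an $M_2$-valued kernel $M(z,w)$ against $L$ recovers an inner-product expression for the associated operator on $H$. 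For contractivity, let $f\in\mathcal{R}(X)$ with $\|f\|_\infty\le 1$: Theorem~\ref{thm:Big} expresses $1-f(z)\overline{f(w)}$ as a positive integral of the scalar kernels $h(z,p)[1-\psi_p(z)\overline{\psi_p(w)}]\overline{h(w,p)}$, so the matrix kernel $(1-f(z)\overline{f(w)})I_2$ lies in the closure of $\mathcal{C}|_\Lambda$; pairing with $L$ yields $\langle(I-f(T)f(T)^*)v,v\rangle\ge 0$ for every $v\in H$, hence $\|f(T)\|\le 1$, so $\pi$ is contractive. Failure of complete contractivity is then immediate: $F$ is unitary-valued on $B$, so $\|F\|_{M_2(\mathcal{R}(X))}=1$ by the maximum modulus principle, but the separation inequality unwinds into $\|\rho F(T)\|_{M_2(\mathcal{B}(H))}>1$, so $\|F(T)\|>1/\rho>1$, and $\pi$ is not $2$-contractive.

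The main obstacle I expect is the Kolmogorov / reproducing-kernel bookkeeping in the middle paragraph: one must check that the operator $T$ built from $L$ is well defined across the different representations of vectors in $H$, that $q(T)$ is invertible whenever $q$ has no zeros in $X$, and above all that pairing $M_2$-valued kernels against $L$ faithfully reproduces the operators $I-f(T)f(T)^*$ and $I-F(T)F(T)^*$ on $H$. This is the most technically delicate part, but is essentially the $2\times 2$ matrix-valued analogue of the corresponding verification in \cite{DritschelRationalDilation}; the positive semidefiniteness of $L$ provides a genuine Hilbert space on which the reproducing-kernel identities propagate cleanly and make every inner-product computation routine once the dictionary between kernels and operators is set up.
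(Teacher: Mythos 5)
Your proposal is correct and follows essentially the same route the paper intends: the paper omits this proof, deferring to the analogous result in \cite{DritschelRationalDilation}, whose argument is precisely your cone-separation via Hahn--Banach on a finite subset, positivity of the separating kernel from the $\psi\equiv0$ elements of $\mathcal{C}$, the GNS/Kolmogorov construction of $H$ and $T$ with $T^{*}\ell(z)^{*}x=\bar{z}\,\ell(z)^{*}x$, contractivity from the decomposition of Theorem~\ref{thm:Big}, and failure of $2$-contractivity by evaluating the separating functional on $I-\rho^{2}F(z)F(w)^{*}$. The only step you compress is the reduction from non-membership of $I-\rho^{2}FF^{*}$ in $\mathcal{C}$ to non-membership of its restriction in the closed cone over some finite $\Lambda$, which requires the compactness/normal-families argument from that reference, but you have correctly identified where it sits.
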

Later on in this section, we will need to work with matrix valued
Herglotz representations, so we will need some results about matrix-valued
measures. Given a compact Hausdorff space $X$, an $m\times m$ matrix-valued
measure \[
\mu=\left(\mu_{jl}\right)_{j,l=1}^{m}\]
 is an $m\times m$ matrix whose entries $\mu_{jl}$ are complex-valued
Borel measures on $X$. The measure $\mu$ is positive (we write $\mu\geq0$)
if for each function $f:X\to\mathbb{C}^{m}$ \[
f=\left(\begin{array}{c}
f_{1}\\
\vdots\\
f_{m}\end{array}\right)\,,\]
 we have \[
0\leq\sum_{j,l}\int_{X}\overline{f_{j}}f_{l}d\mu_{jl}\,.\]
 The positive measure $\mu$ is bounded by $M>0$ if \[
MI_{m}-\left(\mu_{jl}(X)\right)\geq0\]
 is positive semi-definite, where $I_{m}$ is the $m\times m$ identity
matrix.

\begin{lem}
The $m\times m$, matrix-valued measure $\mu$ is positive if and
only if for each Borel set $\omega$ the $m\times m$ matrix \[
\left(\mu_{jl}(\omega)\right)\]
 is positive semi-definite.

Further, if there is a $\kappa$ so that each diagonal entry $\mu_{jj}(X)\leq\kappa$,
then each entry $\mu_{jl}$ of $\mu$ has total variation at most
$\kappa$. Particularly, if $\mu$ is bounded by $M$, then each entry
has variation at most $M$. \end{lem}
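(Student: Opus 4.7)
The plan is to establish the equivalence first and then derive the variation bound from it; both halves reduce to choosing particular test functions in the defining inequality for positivity of $\mu$.

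For the forward direction of the equivalence, I would fix a Borel set $\omega$ and a vector $v\in\mathbb{C}^m$, and substitute the $\mathbb{C}^m$-valued step function $f=v\chi_\omega$ into the positivity condition. The double sum collapses to $\langle(\mu_{jl}(\omega))v,v\rangle\geq 0$, so the matrix $(\mu_{jl}(\omega))$ is positive semi-definite for every $\omega$. Conversely, given pointwise positive semi-definiteness on Borel sets, I would approximate an arbitrary bounded Borel $f$ by simple functions whose components are adapted to a common finite Borel partition $\{\omega_p\}$. On such a partition the defining integral decomposes as $\sum_p \langle(\mu_{jl}(\omega_p))c_p,c_p\rangle\geq 0$ by hypothesis, and a standard limiting argument (dominated convergence against the total variation of the entries) promotes the inequality to all $f$, giving positivity of $\mu$.

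For the second part, the key input is that each $2\times 2$ principal submatrix of $(\mu_{jl}(\omega))$ is positive semi-definite. Non-negativity of its determinant yields the pointwise inequality
\[
|\mu_{jl}(\omega)|^{2}\leq \mu_{jj}(\omega)\,\mu_{ll}(\omega),
\]
valid for every Borel set $\omega$; in particular each $\mu_{jj}$ is a positive scalar measure. Summing over an arbitrary Borel partition $\{\omega_{p}\}$ of $X$ and applying the discrete Cauchy--Schwarz inequality gives
\[
\sum_{p} |\mu_{jl}(\omega_{p})| \leq \sum_{p} \sqrt{\mu_{jj}(\omega_{p})}\sqrt{\mu_{ll}(\omega_{p})} \leq \sqrt{\mu_{jj}(X)\,\mu_{ll}(X)} \leq \kappa,
\]
and taking the supremum over partitions yields the total variation bound $|\mu_{jl}|(X)\leq\kappa$. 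For the final sentence, the assumption that $\mu$ is bounded by $M$, namely $MI_{m}-(\mu_{jl}(X))\geq 0$, applied to the standard basis vector $e_{j}$ gives $\mu_{jj}(X)\leq M$, so the diagonal hypothesis of the previous paragraph holds with $\kappa=M$.

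The main obstacle is essentially bookkeeping, namely keeping straight which positive semi-definiteness (of which matrix, paired with which vector) is being invoked at each step, and correctly bridging the pointwise PSD inequality to a global statement via the discrete Cauchy--Schwarz over partitions. No individual estimate is deep: the whole result rests on the elementary modulus inequality for $2\times 2$ principal submatrices of a PSD matrix.
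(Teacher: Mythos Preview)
Your argument is correct and is the standard route to this lemma. The paper, however, does not supply its own proof: this result falls under the blanket remark at the start of the Representations section that certain proofs carry over verbatim from \cite{DritschelRationalDilation} and are therefore omitted. So there is nothing to compare against beyond noting that your proof is exactly the kind of elementary argument one expects here---indicator functions for the equivalence, the $2\times2$ principal-minor inequality plus discrete Cauchy--Schwarz over partitions for the variation bound.

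One small point worth tightening: the paper's definition of positivity quantifies over ``each function $f:X\to\mathbb{C}^{m}$'' without specifying the class. If one reads this as continuous $f$ (as is natural on a compact Hausdorff space via the Riesz representation), then your forward direction also needs a brief approximation step---approximating $\chi_{\omega}$ from above and below by continuous functions and using inner regularity---before you can plug in $v\chi_{\omega}$ directly. This is routine, but you may want to flag it.
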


\begin{lem}
If $\mu^{n}$ is a sequence of positive $m\times m$ matrix-valued
measures on $X$ which are all bounded above by $M$, then $\mu^{n}$
has a weak-$*$ convergent sub-sequence, that is, there exists a positive
$m\times m$ matrix-valued measure $\mu$, such that for each pair
of continuous functions $f,\, g:X\to\mathbb{C}^{m}$, \[
\sum_{j,\, l}\int_{X}f_{l}\overline{g_{j}}d\mu_{jl}^{n_{k}}\to\sum_{j,\, l}\int_{X}f_{l}\overline{g_{j}}d\mu_{jl}\,.\]\end{lem}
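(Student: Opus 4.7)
The plan is a componentwise application of Banach--Alaoglu followed by a diagonal extraction, with positivity of the limit obtained by passing the defining inequality to the weak-$*$ limit.

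By the preceding lemma, each entry $\mu^{n}_{jl}$ is a complex Borel measure on the compact Hausdorff space $X$ with total variation at most $M$. Via the Riesz--Markov identification of $M(X)$ with $C(X)^{*}$, every such entry lies in the closed ball of radius $M$ in $C(X)^{*}$. Banach--Alaoglu makes this ball weak-$*$ compact, and since $C(X)$ is separable in the applications of interest (where $X$ is a compact metric space), the ball is weak-$*$ metrizable and hence sequentially compact. I would extract a weak-$*$ convergent subsequence for each of the $m^{2}$ index pairs $(j,l)$ in turn and diagonalize, producing a single subsequence $(n_{k})$ along which $\mu^{n_{k}}_{jl}\to\mu_{jl}$ in the weak-$*$ topology for every $(j,l)$ simultaneously. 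Assembling the entries gives the candidate measure $\mu=(\mu_{jl})$.

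The claimed convergence
\[
\sum_{j,l}\int_{X}f_{l}\overline{g_{j}}\, d\mu^{n_{k}}_{jl}\longrightarrow\sum_{j,l}\int_{X}f_{l}\overline{g_{j}}\, d\mu_{jl}
\]
for continuous $f,g:X\to\mathbb{C}^{m}$ then follows immediately: each function $f_{l}\overline{g_{j}}$ lies in $C(X)$, entrywise weak-$*$ convergence handles each term, and the sum is finite.

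The principal substantive point is positivity of $\mu$. For any continuous $f:X\to\mathbb{C}^{m}$, positivity of each $\mu^{n_{k}}$ gives
\[
\sum_{j,l}\int_{X}\overline{f_{j}}f_{l}\, d\mu^{n_{k}}_{jl}\geq 0,
\]
and letting $k\to\infty$ using the weak-$*$ convergence established above preserves the inequality. This is the defining positivity condition for $\mu$, and by the first half of the preceding lemma it is equivalent to the semi-definiteness of $(\mu_{jl}(\omega))$ for each Borel $\omega$ (the passage between the continuous-function formulation and the Borel-set formulation being a standard regularity-plus-approximation argument). I expect the only real obstacle to be justifying sequential compactness of the weak-$*$ ball, i.e.\ separability of $C(X)$, in the setting under consideration; the remaining work is a routine diagonal extraction and the preservation of an inequality under a limit.
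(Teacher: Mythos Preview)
Your argument is correct and is the standard one: entrywise Banach--Alaoglu plus a diagonal extraction, with positivity inherited in the limit. The paper does not supply its own proof of this lemma; the introduction to that section explains that several results there are taken over from \cite{DritschelRationalDilation} without modification, and this is one of them. Your concern about separability of $C(X)$ is well placed but harmless here, since throughout the paper $X$ is a compact subset of $\mathbb{C}$ and hence a compact metric space.
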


\begin{lem}
If $\mu$ is a positive $m\times m$ matrix-valued measure on $X$,
then the diagonal entries, $\mu_{jj}$ are positive measures. Further,
with $\nu=\sum_{j}\mu_{jj}$, there exists an $m\times m$ matrix-valued
function $\Delta:X\to M_{m}(\mathbb{C})$ so that $\Delta(x)$ is
positive semi-definite for each $x\in X$ and $d\mu=\Delta d\nu$
-- that is, for each pair of continuous functions, $f,\, g:X\to\mathbb{C}^{m}$,
\[
\sum_{j,\, l}\int_{X}\overline{g_{j}}f_{l}d\mu_{jl}=\sum_{j,\, l}\int_{X}\overline{g_{j}}\,\Delta_{jl}f_{l}d\nu\,.\]

\end{lem}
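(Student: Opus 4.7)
The plan is to construct $\Delta$ entrywise via the Radon-Nikodym theorem and then verify positive semi-definiteness almost everywhere. First, positivity of the diagonal entries is immediate: by the previous lemma, $(\mu_{jl}(\omega))$ is a positive semi-definite $m\times m$ matrix for every Borel set $\omega \subseteq X$, and the diagonal entry $\mu_{jj}(\omega) \geq 0$ is the value on the vector $e_j$. Hence each $\mu_{jj}$ is a positive Borel measure, and $\nu = \sum_j \mu_{jj}$ is a finite positive measure on $X$.

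Next I show that every entry $\mu_{jl}$ is absolutely continuous with respect to $\nu$. Fix Borel $\omega$ with $\nu(\omega)=0$. Then $\mu_{jj}(\omega)=0$ for all $j$, and for any Borel $\omega' \subseteq \omega$ the $2\times 2$ principal submatrix at $(j,l)$ of $(\mu_{\alpha\beta}(\omega'))$ is positive semi-definite, which gives
\[
|\mu_{jl}(\omega')|^2 \leq \mu_{jj}(\omega')\,\mu_{ll}(\omega') = 0.
\]
Taking the supremum over finite Borel partitions of $\omega$ shows the total variation $|\mu_{jl}|(\omega)=0$, so $\mu_{jl} \ll \nu$. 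By the Radon-Nikodym theorem, for each $j,l$ there is a $\nu$-integrable function $\Delta_{jl}\colon X\to \mathbb{C}$ with $d\mu_{jl} = \Delta_{jl}\,d\nu$. Set $\Delta(x) := (\Delta_{jl}(x))_{j,l=1}^m$.

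The main obstacle is promoting the positive semi-definiteness of $(\mu_{jl}(\omega))$ from Borel sets to a pointwise statement about $\Delta(x)$. For fixed $c \in \mathbb{C}^m$ and every Borel $\omega \subseteq X$,
\[
\int_{\omega} \sum_{j,l} \overline{c_j}\, c_l\, \Delta_{jl}(x)\, d\nu(x) \;=\; \sum_{j,l} \overline{c_j}\, c_l\, \mu_{jl}(\omega) \;\geq\; 0.
\]
Since this holds for every Borel $\omega$, the integrand $q_c(x) := \sum_{j,l} \overline{c_j}c_l \Delta_{jl}(x)$ is non-negative for $\nu$-a.e.\ $x$; call the exceptional set $N_c$. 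Choose a countable dense set $D \subseteq \mathbb{C}^m$ and let $N := \bigcup_{c \in D} N_c$, which has $\nu$-measure zero. For $x \notin N$, $q_c(x) \geq 0$ for all $c \in D$, and by continuity of $c \mapsto q_c(x)$ this extends to all $c \in \mathbb{C}^m$, so $\Delta(x) \succeq 0$ there. Redefining $\Delta$ on the null set $N$ to be the zero matrix preserves the identities $d\mu_{jl} = \Delta_{jl}\,d\nu$ and makes $\Delta(x)$ positive semi-definite for every $x \in X$.

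The displayed integral formula then follows by summing the scalar Radon-Nikodym identities against continuous $\overline{g_j} f_l$, yielding
\[
\sum_{j,l} \int_X \overline{g_j}\, f_l\, d\mu_{jl} \;=\; \sum_{j,l} \int_X \overline{g_j}\, \Delta_{jl}\, f_l\, d\nu,
\]
as required.
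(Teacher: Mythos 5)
Your proof is correct. The paper omits the proof of this lemma entirely (deferring, as it notes at the start of the section, to the analogous result in \cite{DritschelRationalDilation}), and your argument --- entrywise Radon--Nikodym derivatives with respect to the trace measure $\nu$ after establishing absolute continuity via the $2\times2$ principal minors, followed by upgrading positive semi-definiteness from Borel sets to $\nu$-a.e.\ pointwise positivity using a countable dense set of vectors and redefining $\Delta$ on the exceptional null set --- is precisely the standard argument that reference uses.
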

A key result of this section is the existence of a Herglotz representation
for well behaved inner functions, as follows.

\begin{prop}
\label{pro:HerglotzRep}Suppose $F$ is a $2\times2$ matrix-valued
function analytic in a neighbourhood of $R$, $F$ is unitary valued
on $B$, and $F(b)=0$. If $\rho_{F}=1$ and if $S\subseteq R$ is
a finite set, then there exists a probability measure $\mu$ on $\Pi$
and a positive kernel $\Gamma:S\times S\times\Pi\to\mathbb{C}$ so
that \[
1-F(z)F(w)^{*}=\int_{\Pi}\left(1-\psi_{p}(z)\overline{\psi_{p}(w)}\right)\Gamma(z,\, w;\, p)d\mu(p)\,.\]

\end{prop}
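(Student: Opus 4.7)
The strategy is to obtain the representation first at sub-unitary level $\rho < 1$, where we have access to a finite cone decomposition, then pass to $\rho \to 1$ via weak-$*$ compactness of matrix-valued measures (Lemma 5.4). The bridge from the general $\psi \in \mathcal{B}\mathbb{H}(X)$ to the test-function family $\{\psi_p : p \in \Pi\}$ is supplied by Theorem 3.8.

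First I would fix $\rho < 1$. Since $\rho_F = 1$ by hypothesis, $I - \rho^2 F(z)F(w)^* \in \mathcal{C}$, hence admits a \emph{finite} representation
\[
I - \rho^2 F(z)F(w)^* \;=\; \sum_{j=1}^{N_\rho} H_j(z)\bigl[1 - \psi_j(z)\overline{\psi_j(w)}\bigr]H_j(w)^*,
\]
with $\psi_j \in \mathcal{B}\mathbb{H}(X)$ and $H_j \in M_2(\mathbb{H}(X))$. Now apply Theorem~\ref{thm:Big} to each scalar $\psi_j$: this produces a positive measure $\nu_j$ on $\Pi$ and an analytic density $h_j(\cdot,p)$ with
\[
1 - \psi_j(z)\overline{\psi_j(w)} \;=\; \int_\Pi h_j(z,p)\bigl[1-\psi_p(z)\overline{\psi_p(w)}\bigr]\overline{h_j(w,p)}\,d\nu_j(p).
\]
Substituting and combining the $\nu_j$ into a single positive measure $\mu_\rho$ on $\Pi$ via Radon--Nikodym (take $\mu_\rho$ to dominate each $\nu_j$), I obtain, as $M_2$-valued kernels on $S\times S$,
\[
I - \rho^2 F(z)F(w)^* \;=\; \int_\Pi \bigl[1 - \psi_p(z)\overline{\psi_p(w)}\bigr]\Gamma_\rho(z,w;p)\,d\mu_\rho(p),
\]
with $\Gamma_\rho(\cdot,\cdot;p) = \sum_j c_{j,p} H_j(\cdot) h_j(\cdot,p) \overline{h_j(\cdot,p)} H_j(\cdot)^*$ a positive kernel on $S\times S$ for each $p$.

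Next I would normalise: set $z = w = b$. Because $F(b) = 0$ by hypothesis and $\psi_p(b) = 0$ for every $p \in \Pi$ (this is built into the construction of $\psi_p$ in \S\ref{sec:Test-Functions}), the representation collapses to $I = \int_\Pi \Gamma_\rho(b,b;p)\,d\mu_\rho(p)$. A Cauchy--Schwarz/reproducing-kernel inequality then controls $\Gamma_\rho(z,w;p)$ on the finite set $S$ in terms of $\Gamma_\rho(b,b;p)$, so the family of $M_{2|S|}$-valued measures $p \mapsto \Gamma_\rho(\cdot,\cdot;p)\,d\mu_\rho(p)$ is uniformly bounded as $\rho \uparrow 1$.

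Finally, by Lemma~5.4, a subsequence $\rho_k \to 1$ yields a weak-$*$ limit, which by Lemma~5.5 decomposes as $\Gamma(z,w;p)\,d\mu(p)$ for a positive kernel $\Gamma$ on $S\times S$ and a positive measure $\mu$ on $\Pi$. Since $(1 - \psi_p(z)\overline{\psi_p(w)})$ is a bounded continuous function of $p$ for each fixed $(z,w) \in S\times S$, passing to the weak-$*$ limit in the integral identity and using $I - \rho_k^2 F(z)F(w)^* \to I - F(z)F(w)^*$ on $S$ gives the claimed representation. Rescaling absorbs the total mass of $\mu$ into $\Gamma$, making $\mu$ a probability measure. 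The main technical obstacle is the uniform bound required for the weak-$*$ compactness step: one must confirm that controlling $\Gamma_\rho(b,b;p)$ through the normalisation at $b$ is enough to control $\Gamma_\rho(z,w;p)$ for general $z,w \in S$, which is where positivity of the kernel (rather than merely the measure) is essential.
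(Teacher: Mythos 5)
Your overall architecture is the right one, and it is essentially the proof the paper defers to (the paper simply cites \cite[Prop.~5.6]{DritschelRationalDilation} and notes that the normalisation point $0$ is replaced by $b$): membership of $I-\rho^2FF^*$ in the cone for each $\rho<1$ gives a finite sum over scalar $\psi_j$ (note you do need the small observation that $F(z)F(w)^*\in\mathcal{C}$, via $\psi\equiv 0$, so that the set of admissible $\rho$ is an interval reaching up to $\rho_F=1$); Theorem \ref{thm:Big} converts each $\psi_j$ into an integral over $\Pi$; the measures are aggregated; and a weak-$*$ compactness argument passes to $\rho\to1$.

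The genuine gap is the uniform-bound step, which you yourself flag but then route through the wrong estimate. Setting $z=w=b$ gives $I=\int_\Pi\Gamma_\rho(b,b;p)\,d\mu_\rho(p)$, but this controls only the value of the matrix measure at $b$. Positivity of the kernel gives the Cauchy--Schwarz bound $\|\Gamma_\rho(z,w;p)\|\leq\|\Gamma_\rho(z,z;p)\|^{1/2}\|\Gamma_\rho(w,w;p)\|^{1/2}$, i.e.\ it controls off-diagonal entries \emph{by} diagonal ones; it does not, and cannot, control $\Gamma_\rho(z,z;p)$ for $z\in S$ by $\Gamma_\rho(b,b;p)$ (a positive kernel can be huge at $z$ and normalised at $b$). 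The correct source of the bound is to evaluate the representation on the diagonal at each $z\in S$:
\[
I\;\geq\;I-\rho^{2}F(z)F(z)^{*}\;=\;\int_{\Pi}\bigl(1-\lvert\psi_{p}(z)\rvert^{2}\bigr)\Gamma_{\rho}(z,z;p)\,d\mu_{\rho}(p)\,,
\]
and to observe that $\delta:=\min_{z\in S}\inf_{p\in\Pi}\bigl(1-\lvert\psi_{p}(z)\rvert^{2}\bigr)>0$. This positivity is uniform because $S$ is finite, $\Pi$ is compact, $p\mapsto\psi_{p}(z)$ is continuous (via the homeomorphism $p\mapsto f_{p}$ onto $\widehat{\mathbb{K}}$), and $\lvert\psi_{p}(z)\rvert<1$ on $R$ since $\Re f_{p}>0$ there. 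Hence $\int_\Pi\Gamma_\rho(z,z;p)\,d\mu_\rho(p)\leq\delta^{-1}I$ for every $z\in S$, and the lemma stating that a positive matrix-valued measure with bounded diagonal has all entries of bounded variation gives the uniform bound needed for the weak-$*$ compactness lemma. The normalisation at $b$ is then used only at the end, to renormalise $\mu$ to a probability measure; with that correction the rest of your argument goes through.
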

\begin{proof}
The proof of this result is almost identical to that of \cite[Prop. 5.6]{DritschelRationalDilation},
except that functions required to vanish at zero, are now required
to vanish at $b$ instead. 
\end{proof}
Another tool that will prove useful is transfer function representations.
For our purposes it will suffice to work with relatively simple colligations.
We will define a unitary colligation $\Sigma$ by $\Sigma=\left(U,\, K,\,\mu\right)$,
where $\mu$ is a probability measure on $\Pi$, $K$ is a Hilbert
space, and $U$ is a linear operator, defined by \[
U=\left(\begin{array}{cc}
\mathbf{A} & \mathbf{B}\\
\mathbf{C} & \mathbf{D}\end{array}\right)\in\mathcal{B}\left(\begin{array}{c}
L^{2}(\mu)\otimes K\\
\oplus\\
\mathbb{C}^{2}\end{array}\right)\,,\]
 where $L^{2}\otimes K$ can be regarded as $K$ valued $L^{2}$.

We define $\Phi:R\to\mathcal{B}\left(L^{2}(\mu)\otimes K\right)$
by \[
\left(\Phi(z)\, f\right)(p)=\psi_{p}(z)\, f(p)\,.\]
 From here, we define the transfer function associated to $\Sigma$
by \[
W_{\Sigma}(z)=\mathbf{D}+\mathbf{C}\Phi(x)\left(I-\Phi(z)\mathbf{A}\right)^{-1}\Phi(z)\mathbf{B}\,.\]
 We can see that as $\mathbf{A}$ is a contraction and $\Phi(z)$
is a strict contraction, the inverse in $W_{\Sigma}$ exists for any
$z\in R$.

\begin{prop}
\label{pro:TransferContractive}The transfer function is contraction
valued, that is, $\left\Vert W_{\Sigma}(z)\right\Vert \leq1$ for
all $z\in R$. In fact for all $z,\, w\in R$ \[
I-W_{\Sigma}(z)W_{\Sigma}(w)^{*}=\mathbf{C}\left(I-\Phi(z)\mathbf{A}\right)^{-1}\left[I-\Phi(z)\Phi(w)^{*}\right]\left(I-\Phi(w)\mathbf{A}\right)^{*-1}\mathbf{C}^{*}\,.\]

\end{prop}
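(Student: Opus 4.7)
The plan is to establish the claimed kernel identity by direct algebraic manipulation using the unitarity of $U$, and then obtain the contraction bound by specialising to $w=z$. Set $Z(z):=(I-\Phi(z)\mathbf{A})^{-1}$; this inverse exists as a convergent Neumann series because $\|\Phi(z)\|=\sup_{p\in\Pi}|\psi_{p}(z)|<1$ for each fixed $z\in R$ (every $\psi_{p}$ is inner, $z$ lies in the open set $R$, and $\Pi$ is compact so the supremum is strictly less than $1$), while $\|\mathbf{A}\|\le 1$ because $\mathbf{A}$ is a corner of a unitary. The pivotal algebraic identity is the resolvent relation $Z(z)\Phi(z)\mathbf{A}=Z(z)-I$, together with its adjoint $\mathbf{A}^{*}\Phi(w)^{*}Z(w)^{*}=Z(w)^{*}-I$, both being rearrangements of $Z(z)(I-\Phi(z)\mathbf{A})=I$.

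From $UU^{*}=I$ one reads off the four block relations $\mathbf{A}\mathbf{A}^{*}+\mathbf{B}\mathbf{B}^{*}=I$, $\mathbf{C}\mathbf{C}^{*}+\mathbf{D}\mathbf{D}^{*}=I$, $\mathbf{A}\mathbf{C}^{*}+\mathbf{B}\mathbf{D}^{*}=0$, and $\mathbf{C}\mathbf{A}^{*}+\mathbf{D}\mathbf{B}^{*}=0$. I would expand $W_{\Sigma}(z)W_{\Sigma}(w)^{*}$ as a sum of four products, substitute these relations to eliminate $\mathbf{D}\mathbf{D}^{*}$, $\mathbf{D}\mathbf{B}^{*}$, $\mathbf{B}\mathbf{D}^{*}$, and $\mathbf{B}\mathbf{B}^{*}$, and then telescope with the resolvent identity. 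The pieces independent of $w$ collect to $\mathbf{C}Z(z)\mathbf{C}^{*}$; the pieces carrying a single factor $\Phi(w)^{*}Z(w)^{*}$ collect, after one application of the adjoint resolvent identity on the right, to combine with the previous piece into $\mathbf{C}Z(z)Z(w)^{*}\mathbf{C}^{*}$; and the only remaining contribution is the cross term $-\mathbf{C}Z(z)\Phi(z)\Phi(w)^{*}Z(w)^{*}\mathbf{C}^{*}$. Subtracting from $I$ and factoring then yields
\[
I-W_{\Sigma}(z)W_{\Sigma}(w)^{*}=\mathbf{C}Z(z)\bigl[I-\Phi(z)\Phi(w)^{*}\bigr]Z(w)^{*}\mathbf{C}^{*},
\]
which is the second assertion of the proposition.

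Setting $w=z$ then delivers the first assertion: since $\Phi(z)$ is a strict contraction, $I-\Phi(z)\Phi(z)^{*}$ is positive, hence the right hand side is positive semidefinite, forcing $W_{\Sigma}(z)W_{\Sigma}(z)^{*}\le I$ and so $\|W_{\Sigma}(z)\|\le 1$. There is no real obstacle beyond careful bookkeeping: this is the standard transfer function identity for a unitary colligation, with the scalar variable $z$ replaced by the commuting family of multiplication operators $\Phi(z)$, and the one thing requiring care is matching each of the four unitarity relations to the correct pair of terms and applying the resolvent identity consistently on both the $z$ and the $w$ sides.
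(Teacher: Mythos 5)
Your proof is correct and is exactly the standard unitary-colligation computation that the paper implicitly relies on (the paper omits the proof, deferring to the analogous result in \cite{DritschelRationalDilation}): expand $W_{\Sigma}(z)W_{\Sigma}(w)^{*}$, eliminate $\mathbf{B},\mathbf{D}$ via the block relations from $UU^{*}=I$, and telescope with the resolvent identity $Z(z)\Phi(z)\mathbf{A}=Z(z)-I$ to reach $\mathbf{C}Z(z)\left[I-\Phi(z)\Phi(w)^{*}\right]Z(w)^{*}\mathbf{C}^{*}$. You also correctly read the displayed formula for $W_{\Sigma}$ as $\mathbf{D}+\mathbf{C}\left(I-\Phi(z)\mathbf{A}\right)^{-1}\Phi(z)\mathbf{B}$, which is the form consistent with the stated kernel identity.
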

Note that if we define $H(w)=\left(I-\mathbf{A}^{*}\Phi(w)^{*}\right)^{-1}\mathbf{C}^{*}$,
for $w$ fixed, $H(w)^{*}$ is a function on $\Pi$, so we write $H_{p}(w)^{*}$.
We can see that by considering $L^{2}(\mu)\otimes K$ as a measure
space, Proposition \vref{pro:TransferContractive} gives \[
I-W(z)W(w)^{*}=\int_{\Pi}\left(1-\psi_{p}(z)\overline{\psi_{p}(w)}\right)H_{p}(z)H_{p}(w)^{*}d\mu(p)\,.\]

\begin{prop}
\label{pro:TransferRepn}If $S\subseteq R$ is a finite set, $W:S\to M_{2}(\mathbb{C})$
and there is a positive kernel $\Gamma:S\times S\times\Pi\to M_{2}(\mathbb{C})$
such that \[
I-W(z)W(w)^{*}=\int_{\Pi}\left(1-\psi_{p}(z)\overline{\psi_{p}(w)}\right)\Gamma(z,\, w;\, p)\, d\mu(p)\]
 for all $z,\, w\in S$, then there exists $G:R\to M_{2}(\mathbb{C})$
such that $G$ is analytic, $\left\Vert G(z)\right\Vert \leq1$ and
$G(z)=W(z)$ for $z\in S$. Indeed, there exists a finite-dimensional
Hilbert space $K$ (dimension at most $2\left|S\right|$) and a unitary
colligation $\Sigma=\left(U,\, K,\,\mu\right)$ so that \[
G=W_{\Sigma}\,,\]
 and hence there exists $\Delta:R\times R\times\Pi\to M_{2}(\mathbb{C})$
a positive analytic kernel such that\[
I-G(z)G(w)^{*}=\int_{\Pi}\left(1-\psi_{p}(z)\overline{\psi_{p}(w)}\right)\Delta(z,\, w;\, p)\, d\mu(p)\]
 for all $z,\, w\in R$. 
\end{prop}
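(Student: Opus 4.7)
The plan is to use a standard lurking isometry / Arveson extension argument to realize $W$ as the transfer function of a unitary colligation, and then read off the analytic extension and the positive kernel representation from the machinery developed earlier in the section.

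First I would apply a Kolmogorov/Gram factorization to the given positive kernel $\Gamma$ on the finite set $S \times S \times \Pi$. Since $S$ is finite and $W$ takes $2\times 2$ matrix values, the factorization can be arranged so that $\Gamma(z,w;p) = H(z,p)H(w,p)^*$ with $H(\cdot,p)$ taking values in some auxiliary finite-dimensional Hilbert space $K$ of dimension at most $2\lvert S\rvert$. Then the hypothesis rearranges to the symmetric identity
\[
W(z)W(w)^* + \int_{\Pi} H(z,p)H(w,p)^*\,d\mu(p) = I + \int_{\Pi} \psi_p(z)H(z,p)H(w,p)^*\overline{\psi_p(w)}\,d\mu(p),
\]
which I would interpret as an inner product identity in $\mathbb{C}^2 \oplus \bigl(L^2(\mu)\otimes K\bigr)$ between vectors built from $(W(z)^* v, H(z,\cdot)^* v)$ and $(v, \Phi(z)^* H(z,\cdot)^* v)$ as $z$ ranges over $S$ and $v$ over $\mathbb{C}^2$.

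Next I would use this identity to define a partial isometry $V$ sending the span of the right-hand vectors to the span of the left-hand vectors, and then extend $V$ to a unitary $U$ on all of $\bigl(L^2(\mu)\otimes K\bigr)\oplus\mathbb{C}^2$, enlarging $K$ if needed while preserving the dimension bound. Writing
\[
U = \begin{pmatrix} \mathbf{A} & \mathbf{B} \\ \mathbf{C} & \mathbf{D} \end{pmatrix}
\]
gives the unitary colligation $\Sigma=(U,K,\mu)$. A direct manipulation using the defining property of $V$ and the geometric series expansion of $(I-\Phi(z)\mathbf{A})^{-1}$ shows that $W_\Sigma(z)v = W(z)v$ for every $z\in S$ and $v\in\mathbb{C}^2$; this is the standard verification that the transfer function of the lurking-isometry colligation recovers the data. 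Setting $G := W_\Sigma$ then gives an analytic, contraction-valued function on $R$ which extends $W$, and Proposition \ref{pro:TransferContractive} together with the remark following it immediately yields the positive analytic kernel $\Delta(z,w;p) = H_p(z)H_p(w)^*$ with $H(w)^* = (I-\mathbf{A}^*\Phi(w)^*)^{-1}\mathbf{C}^*$.

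The main obstacle is setting up the lurking isometry cleanly: one must verify that the map $V$ is well-defined on its initial domain (that is, that the ``input'' side genuinely determines the ``output'' side), which is exactly what the rearranged kernel identity encodes, and one must then extend $V$ to a unitary while keeping $K$ finite-dimensional with the stated dimension bound. Since $S$ is finite and the data live in finite-dimensional spaces, the defect spaces of $V$ are finite-dimensional, so the unitary extension is possible with only a bounded enlargement of $K$; this is where the bound $\dim K \leq 2\lvert S\rvert$ comes from. Everything else, including deriving the integral representation of $I - G(z)G(w)^*$ from the operator identity in Proposition \ref{pro:TransferContractive}, is a routine unwinding of the direct integral structure of $L^2(\mu)\otimes K$.
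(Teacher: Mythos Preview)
Your proposal is correct and follows exactly the approach the paper indicates: the paper's own proof is simply a citation to \cite{DritschelRationalDilation} together with the remark that one should use the Kolmogorov factorization from \cite[Thm.~2.62]{AglerPick}, and what you have written is precisely the standard lurking isometry argument that this citation unpacks to. One small remark: you do not actually need to enlarge $K$ when extending $V$ to a unitary, since the initial domain and range of $V$ each have dimension at most $2\lvert S\rvert$ inside the common ambient space $\bigl(L^2(\mu)\otimes K\bigr)\oplus\mathbb{C}^2$, so their orthogonal complements automatically have equal (possibly infinite) dimension and $V$ extends to a unitary on the nose; the bound $\dim K\le 2\lvert S\rvert$ therefore comes straight from the Kolmogorov step and is never threatened later.
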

The proof is as in \cite{DritschelRationalDilation}, although for
our purposes it makes sense to use the version of Kolmogorov's theorem
in \cite[Thm. 2.62]{AglerPick}.

\subsection{Uniqueness}

\begin{prop}
\label{pro:Tight}Suppose $F:R\to M_{2}(\mathbb{C})$ is analytic
in a neighbourhood of $X$, unitary on $B$, and with a standard zero
set. Then there exists a set $S\subseteq R$ with $2n+3$ elements
such that, if $Z:R\to M_{2}(\mathbb{C})$ is contraction-valued, analytic,
and $Z(z)=F(z)$ for $z\in S$, then $Z=F$. 
\end{prop}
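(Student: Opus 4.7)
The plan is to combine the integral representations of Propositions~\ref{pro:HerglotzRep} and~\ref{pro:TransferRepn} with the rigidity of Theorem~\ref{thm:TheEpsilon}. First I would take $S_0 = \{b,a_1,\ldots,a_{2n}\}$ and enlarge it by two auxiliary points $w_1,w_2\in R$ chosen generically---so that $w_1,w_2,Jw_1,Jw_2$ are disjoint from the Fay poles $P_1,\ldots,P_n$ and from $S_0$, and so that the overall configuration falls within the tolerance $\epsilon$ produced by Theorem~\ref{thm:TheEpsilon} when applied with the points $a_j$ and the vectors $\gamma_j$ supplied by the standard zero set of $F$. Setting $S = S_0\cup\{w_1,w_2\}$ gives $|S|=2n+3$.

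Now suppose $Z\colon R\to M_2(\mathbb{C})$ is contraction-valued, analytic on $R$ and agrees with $F$ on $S$. Because $F$ is unitary on $B$, the maximum principle gives $\|F\|_\infty=1$, hence $\rho_F\le 1$; combined with the trivial boundary cancellation of $I - F(z)F(w)^*$ on $B$ one checks $\rho_F=1$, so Proposition~\ref{pro:HerglotzRep} furnishes a probability measure $\mu$ on $\Pi$ and a positive kernel $\Gamma$ with
\[
I - F(z)F(w)^* = \int_\Pi \bigl(1-\psi_p(z)\overline{\psi_p(w)}\bigr)\,\Gamma(z,w;p)\,d\mu(p)\qquad (z,w\in S),
\]
and Proposition~\ref{pro:TransferRepn} extends this to a transfer-function realisation on all of $R$. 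Applying the same machinery to $I - Z(z)Z(w)^*$, and evaluating at pairs drawn from $S$, one uses $Z(b)=0$ and $Z(a_j)^*\gamma_j = F(a_j)^*\gamma_j = 0$ to produce a family of integral identities that pin the inner-product structure of the $H_p$-fields at $b$ and at the $a_j$.

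The crux is to translate these identities into the Fay-kernel form of \eqref{eq:poles}. The reproducing property of $K^b$ from Theorem~\ref{thm:FayKernel} lets one expand the relevant columns of $Z-F$ as combinations $\sum c_j K^b(\cdot,a_j)\gamma_j + v$ plus (via the auxiliary interpolation conditions at $w_1,w_2$) a term of the form $K^b(\cdot,z')h$ with $z'\in\{w_1,w_2\}$. The standard-zero-set conditions---no $n+1$ of the $\gamma_j$ are collinear and $Ja_j\neq P_i$---are precisely the hypotheses of Theorem~\ref{thm:TheEpsilon}, and the absence of poles at $P_1,\ldots,P_n$ for $Z-F$ is automatic because both $Z$ and $F$ are analytic on $R$. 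Theorem~\ref{thm:TheEpsilon} therefore collapses all coefficients $c_j$, together with the constant $v$, to zero, and the second clause of that theorem disposes of the $K^b(\cdot,z')$ term contributed by $w_1,w_2$. Hence $Z-F\equiv 0$. The main obstacle is executing the translation to the form \eqref{eq:poles} faithfully---making precise how the kernel matching on $S$ produces exactly the data that Theorem~\ref{thm:TheEpsilon} requires, and verifying that the two extra points $w_1,w_2$ absorb the two complex degrees of freedom in the constant term that the $2n+1$ zero-set conditions do not already constrain.
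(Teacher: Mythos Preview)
Your argument has a fatal gap at the very first step: you assert that ``combined with the trivial boundary cancellation of $I - F(z)F(w)^*$ on $B$ one checks $\rho_F=1$''. This is false, and in fact is exactly the opposite of what the paper establishes. The quantity $\rho_F$ is defined via membership in the cone $\mathcal{C}$, and the entire point of Theorem~\ref{thm:ElTheoremGrande} is to exhibit inner functions $F$ (unitary on $B$, with standard zero set) for which $\rho_F<1$. Nothing in the hypotheses of Proposition~\ref{pro:Tight} lets you conclude $\rho_F=1$; that hypothesis is added separately and explicitly in Theorem~\ref{thm:CombinedTightRepn}, which \emph{uses} Proposition~\ref{pro:Tight}. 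Since Proposition~\ref{pro:HerglotzRep} requires $\rho_F=1$, you cannot invoke it here, and the whole chain of representations you build on it collapses. There is a second, related problem: you propose to apply ``the same machinery'' to $I-Z(z)Z(w)^*$, but $Z$ is merely contraction-valued and analytic, with no unitarity on $B$ and no reason for any $\rho_Z=1$ condition, so Proposition~\ref{pro:HerglotzRep} is unavailable for $Z$ as well. Finally, your appeal to Theorem~\ref{thm:TheEpsilon} presumes that the $(a_j,\gamma_j)$ of an arbitrary standard zero set lie within $\epsilon$ of the special configuration $\gamma_1^0=\cdots=\gamma_n^0=e_1$, $\gamma_{n+1}^0=\cdots=\gamma_{2n}^0=e_2$; this proximity is not part of the definition of a standard zero set.

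The paper's proof avoids all of this by working directly with the reproducing kernel $K^b$ and never touching $\rho_F$ or the cone $\mathcal{C}$. Multiplication by $F$ is an isometry $V$ on $\mathbb{H}^2_2$, and one computes that $\ker V^*$ is the $(2n+2)$-dimensional span of $\{K^b(\cdot,a_j)\gamma_j\}$. Consequently the Pick-type matrix $M_A=\bigl((I-F(z)F(w)^*)K^b(z,w)\bigr)_{z,w\in A}$ has rank at most $2n+2$ for every finite $A$. Choosing $S$ to contain $a_1,\ldots,a_{2n},b$ plus two further points gives $M_S$ of rank exactly $2n+2$ in a $(4n+6)$-dimensional space, so its kernel meets each of the coordinate subspaces $\mathbb{C}^{2n+3}\otimes e_i$. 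For any contraction $Z$ agreeing with $F$ on $S$, the enlarged matrix $N_\zeta$ built from $Z$ over $S\cup\{\zeta\}$ is positive semidefinite with $M_S$ as a principal block; positivity then forces $N_\zeta$ to annihilate the same kernel vectors, and reading off the $\zeta$-row yields an explicit formula showing $Z(\zeta)=F(\zeta)$ for all but countably many $\zeta$. No integral representations, and no hypothesis on $\rho_F$, are needed.
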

\begin{proof}
Let $K^{b}$ denote the Fay kernel for $R$ defined in Theorem \vref{thm:FayKernel}.
That is, $K^{b}$ is the reproducing kernel for the Hilbert space
\[
\mathbb{H}^{2}:=\mathbb{H}^{2}(R,\,\omega_{b})\]
 of functions analytic in $R$ with $L^{2}(\omega_{b})$ boundary
values. Let $\mathbb{H}_{2}^{2}$ denote $\mathbb{C}^{2}$-valued
$\mathbb{H}^{2}$. Since $F$ is unitary valued on $B$, the mapping
$V$ on $\mathbb{H}_{2}^{2}$ given by $VG(z)=F(z)G(z)$ is an isometry.
Also, as we will show, the kernel of $V^{*}$ is the span of \[
\mathfrak{V}:=\left\{ K^{b}(\cdot,\, a_{j})\gamma_{j}:\, j=1,\,\ldots,\,2n+2\right\} \,,\]
 where $F(a_{j})^{*}\gamma_{j}=0$ and $\gamma_{j}\neq0$; that is,
$(a_{j},\,\gamma_{j})$ is a zero of $F^{*}$.

We note, for future use, that if $\varphi$ is a scalar-valued analytic
function on a neighbourhood of $R$, with no zeroes on $B$, and zeroes
$w_{1},\,\ldots,\, w_{n}\in R$, all of multiplicity one, and $f\in\mathbb{H}^{2}$
has roots at all these $w_{i}$s, then $f=\varphi g$ for some $g\in\mathbb{H}^{2}$.

Now, suppose $\psi\in\mathbb{H}^{2}$ and for all $h\in\mathbb{H}^{2}$
we have $\left\langle \psi,\,\varphi h\right\rangle =0$. Since the
set \[
\mathfrak{K}:=\left\{ K^{b}(\cdot,\, w_{j}):\,1\leq j\leq n\right\} \]
 is linearly independent, we know there is some linear combination
\[
f=\psi-\sum_{j=1}^{n}c_{j}K^{b}(\cdot,\, w_{j})\,,\]
 so that $f(w_{j})=0$ for all $j$, and so $f=\varphi g$ for some
$g$. Since \[
\left\langle K^{b}(\cdot,\, w_{j}),\,\varphi h\right\rangle =\overline{\varphi(w_{j})\, h(w_{j})}=0\]
 for each $j$ and $h$, it follows that $\left\langle f,\,\varphi h\right\rangle =0$
for all $h$. In particular, if $h=g$ (the $g$ we found earlier),
then \[
\left\langle \varphi g,\,\varphi g\right\rangle =\left\langle f,\,\varphi g\right\rangle =0\,,\]
 so $g\equiv0$, and so \begin{equation}
0=f=\psi-\sum_{j=1}^{n}c_{j}K^{b}(\cdot,\, w_{j})\,.\label{eq:RootSpan}\end{equation}
 This tells us that $\psi$ is in the span of $\mathfrak{K}$, so
$\mathfrak{K}$ is a basis for the orthogonal complement of $\left\{ \varphi h:\, h\in\mathbb{H}^{2}\right\} $.

We now find the kernel of $V^{*}$. Write $a_{2n+1}=a_{2n+2}=b$.
Since $F(b)=0$, there is a function $H$ analytic in a neighbourhood
of $X$ so that $F(z)=(z-b)H(z)$. The function $\varphi(z)=(z-b)\det\left(H(z)\right)$
satisfies the hypothesis of the preceding paragraph.

Let \[
G:=\left(\begin{array}{cc}
h_{22} & -h_{12}\\
-h_{21} & h_{11}\end{array}\right)\,,\]
 where $H=\left(h_{jl}\right)$. Then \[
FG=(z-b)HG=(z-b)\det(H)I\,,\]
 where $I$ is the $2\times2$ identity matrix.

Now, suppose $x\in\mathbb{H}_{2}^{2}$ and $V^{*}x=0$. Let $x_{1},\, x_{2}$
be the co-ordinates of $x$. For each $g\in\mathbb{H}_{2}^{2}$, \begin{align*}
0 & =\left\langle Gg,\, V^{*}x\right\rangle \\
 & =\left\langle VGg,\, x\right\rangle \\
 & =\left\langle (z-b)\det(H)g,\, x\right\rangle \\
 & =\left\langle (z-b)\det(H)g_{1},\, x_{1}\right\rangle +\left\langle (z-b)\det(H)g_{2},\, x_{2}\right\rangle \,.\end{align*}
 It therefore follows from the discussion leading up to \eqref{eq:RootSpan}
that both $x_{1}$ and $x_{2}$ are in the span of \[
\left\{ K^{b}(\cdot,\, a_{j}):\,1\leq j\leq2n+2\right\} \,,\]
 so \[
x\in\textrm{Span}\left\{ K^{b}(\cdot,\, a_{j})v:\,1\leq j\leq2n+2,\, v\in\mathbb{C}^{2}\right\} \,.\]
 In particular, there exist vectors $v_{j}\in\mathbb{C}^{2}$ such
that \[
x=\sum_{j=1}^{2n+2}K^{b}(\cdot,\, a_{j})\, v_{j}\,.\]

We can check that $V^{*}vK^{b}(\cdot,\, a)=F(a)^{*}vK^{b}(\cdot,\, a)$,
and $F(b)^{*}=0$, so \[
0=V^{*}x=\sum_{j=1}^{2n}F(a_{j})^{*}v_{j}K^{b}(\cdot,\, a_{j})\,,\]
 but the $K^{b}(\cdot,\, a_{j})$s are linearly independent, so $F(a_{j})^{*}v_{j}=0$
for all $j$. Conversely, if $F(a_{j})^{*}v_{j}=0$ then $V^{*}v_{j}K^{b}(\cdot,\, a_{j})=0$,
so the kernel of $V^{*}$ is spanned by $\mathfrak{V}$.

Now, since $V$ is an isometry, $I-VV^{*}$ is the projection onto
the kernel of $V^{*}$, which by the above argument has dimension
$2n+2$, so $I-VV^{*}$ has rank $2n+2$. So, for any finite set $A\subseteq R$,
the block matrix with $2\times2$ entries \begin{align*}
M_{A}= & \left(\left[\left\langle \left(I-VV^{*}\right)K^{b}(\cdot,\, w)e_{j},\, K^{b}(\cdot,\, z)e_{l}\right\rangle \right]_{j,\, l=1,\,2}\right)_{z,\, w\in A}\\
= & \left(\left(I-F(z)F(w)^{*}\right)K^{b}(z,\, w)\right)_{z,\, w\in A}\end{align*}
 has rank at most $2n+2$. In particular, if $A=\left\{ a_{1},\,\ldots,\, a_{2n+2}\right\} \,,$
then $M_{A}$ has rank exactly $2n+2$. Choose $a_{2n+3},\, a_{2n+4}$
distinct from $a_{1},\,\ldots,\, a_{2n+2}$ so that \[
S=\left\{ a_{1},\,\ldots,\, a_{2n+2},\, a_{2n+3},\, a_{2n+4}\right\} \]
 has $2n+3$ \emph{distinct} points. Since $A\subseteq S$, $M_{S}$
has rank at least $2n+2$. However, by the above discussion, its rank
cannot exceed $2n+2$, so its rank must be exactly $2n+2$.

The matrix $M_{S}$ is $\left(4n+6\right)\times\left(4n+6\right)$,
(a $\left(2n+3\right)\times\left(2n+3\right)$ matrix with $2\times2$
matrices as its entries), and $M_{S}$ has rank $2n+2$, so must have
nullity (that is, kernel dimension) $2n+4$. Further, the subspace
\[
\mathcal{L}_{1}:=\left\{ \underbrace{\left(\begin{array}{c}
\left(\begin{array}{c}
\alpha_{1}\\
0\end{array}\right)\\
\vdots\\
\left(\begin{array}{c}
\alpha_{2n+3}\\
0\end{array}\right)\end{array}\right)}_{=\alpha\otimes e_{1}}:\,\alpha=\left(\begin{array}{c}
\alpha_{1}\\
\vdots\\
\alpha_{2n+3}\end{array}\right)\in\mathbb{C}^{2n+3}\right\} \]
 is $2n+3$ dimensional, so there exists a non-zero $x_{1}=y_{1}\otimes e_{1}$
in $\mathcal{L}_{1}$ which is in the kernel of $M_{S}$. Similarly,
$\mathcal{L}_{2}:=\left\{ \alpha\otimes e_{2}:\,\alpha\in\mathbb{C}^{2n+3}\right\} $
contains some $x_{2}$ in the kernel of $M_{S}$.

Let $x=(x_{1}\quad x_{2})$, so $x$ is the $(4n+6)\times2$ matrix
\[
x=\left(\begin{array}{c}
\left(\begin{array}{cc}
(y_{1})_{1} & 0\\
0 & (y_{2})_{1}\end{array}\right)\\
\left(\begin{array}{cc}
(y_{1})_{2} & 0\\
0 & (y_{2})_{2}\end{array}\right)\\
\vdots\\
\left(\begin{array}{cc}
(y_{1})_{2n+3} & 0\\
0 & (y_{2})_{2n+3}\end{array}\right)\end{array}\right)\,.\]
 It will be more convenient to refer to $2\times2$ blocks in $x$
by their corresponding point in $S$, rather than their number, so
we say \[
x(w)=\left(x_{1}(w)\quad x_{2}(w)\right)=\left(\begin{array}{cc}
y_{1}(w) & 0\\
0 & y_{2}(w)\end{array}\right)\,.\]
 In this notation, the identity $M_{S}x=0$ becomes \[
\sum_{w\in S}K^{b}(z,\, w)\, x(w)=F(z)\sum_{w\in S}K^{b}(z,\, w)\, F(w)^{*}\, x(w)\]
 for each $z$.

Now, suppose $Z:R\to M_{2}(\mathbb{C})$ is analytic, contraction
valued, and $Z(z)=F(z)$ for $z\in S$. The operator $W$ of multiplication
by $Z$ on $\mathbb{H}_{2}^{2}$ is a contraction and \[
W^{*}K^{b}(\cdot,\, w)v=Z(w)^{*}vK^{b}(\cdot,\, w)\,.\]
 Given $\zeta\in R$, $\zeta\notin S$, let $S^{\prime}=S\cup\{\zeta\}$
and consider the decomposition of \[
N_{\zeta}=\left(\left(I-Z(z)Z(w)^{*}\right)K^{b}(z,\, w)\right)_{z,\, w\in S^{\prime}}\]
 into blocks labelled by $S$ and $\{\zeta\}$. Thus $N_{\zeta}$
is a $(2n+4)\times(2n+4)$ matrix with $2\times2$ block entries.
The upper left $(2n+3)\times(2n+3)$ block is simply $M_{S}$, as
$Z(z)=F(z)$ for $z\in S$.

Let \[
x^{\prime}=\left(\begin{array}{c}
x\\
\left(\begin{array}{cc}
0 & 0\\
0 & 0\end{array}\right)\end{array}\right)\,.\]
 Since $N_{\zeta}$ is positive semi-definite and $M_{S}x=0$, it
can be shown that $N_{\zeta}x^{\prime}=0$. An examination of the
last two entries of the equation $N_{\zeta}x^{\prime}=0$ gives \begin{equation}
\sum_{w\in S}K^{b}(\zeta,\, w)x(w)=Z(\zeta)\sum_{w\in S}Z(w)^{*}K^{b}(\zeta,\, w)x(w)\,.\label{eq:ZConfined}\end{equation}

The left hand side of \eqref{eq:ZConfined} is a rank 2, $2\times2$
matrix at all but countably many $\zeta$, as it is a diagonal matrix
whose diagonal elements are of the form \[
\sum_{w\in S}K^{b}(\zeta,\, w)y_{i}(w)\,;\]
 that is, linear combinations of $K^{b}(\zeta,\, w)$s. If such a
function is zero at an uncountable number of $\zeta$s, it is identically
zero, which is impossible, as the $K^{b}(\cdot,\, w)$s are linearly
independent and the $y_{i}(w)$s are not all zero. We can now see
that \[
\sum_{w\in S}Z(w)^{*}K^{b}(\zeta,\, w)x(w)\]
 is invertible at all but countably many $\zeta$, so \begin{align*}
Z(\zeta)= & \sum_{w\in S}K^{b}(\zeta,\, w)x(w)\left(\sum_{w\in S}Z(w)^{*}K^{b}(\zeta,\, w)x(w)\right)^{-1}\\
= & \sum_{w\in S}K^{b}(\zeta,\, w)x(w)\left(\sum_{w\in S}F(w)^{*}K^{b}(\zeta,\, w)x(w)\right)^{-1}\\
= & F(\zeta)\end{align*}
 at all but finitely many $\zeta$, so $Z=F$. 
\end{proof}
We combine some of the preceding results to get the following.

\begin{thm}
\label{thm:CombinedTightRepn}Suppose $F$ is a $2\times2$ matrix-valued
function analytic in a neighbourhood of $R$, which is unitary-valued
on $B$, and with a standard zero set. If $\rho_{F}=1$, then there
exists a unitary colligation $\Sigma=(U,\, K,\,\mu)$ such that $F=W_{\Sigma}$,
and so that the dimension of $K$ is at most $4n+6$. In particular,
$\mu$ is a probability measure on $\Pi$ and there is an analytic
function $H:R\to L^{2}(\mu)\otimes M_{4n+6,\,2}(\mathbb{C})$, denoted
by $H_{p}(z)$, so that \[
I-F(z)F(w)^{*}=\int_{\Pi}\left(1-\psi_{p}(z)\overline{\psi_{p}(w)}\right)H_{p}(z)H_{p}(w)^{*}d\mu(p)\]
 for all $z,\, w\in R$. 
\end{thm}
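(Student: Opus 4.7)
The plan is to assemble the theorem as a composition of three earlier results: Proposition \ref{pro:Tight} (the tight uniqueness set $S$ of size $2n+3$), Proposition \ref{pro:HerglotzRep} (the finite-set Herglotz representation when $\rho_F=1$ and $F(b)=0$), and Proposition \ref{pro:TransferRepn} (the transfer-function realisation from a finite-set representation). The standard zero set hypothesis already packages $F(b)=0$ together with the distinctness/genericity conditions needed in Proposition \ref{pro:Tight}, so all the hypotheses line up without extra work.

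First I would invoke Proposition \ref{pro:Tight} to produce a set $S\subseteq R$ with exactly $2n+3$ points such that any analytic, contraction-valued $Z:R\to M_2(\mathbb{C})$ agreeing with $F$ on $S$ must equal $F$. Then, since $F(b)=0$ and $\rho_F=1$, Proposition \ref{pro:HerglotzRep} supplies a probability measure $\mu$ on $\Pi$ and a positive kernel $\Gamma:S\times S\times\Pi\to M_2(\mathbb{C})$ with
\[
I-F(z)F(w)^{*}=\int_{\Pi}\bigl(1-\psi_{p}(z)\overline{\psi_{p}(w)}\bigr)\,\Gamma(z,w;p)\,d\mu(p)
\]
for $z,w\in S$. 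Feeding this finite-set representation into Proposition \ref{pro:TransferRepn} yields a finite-dimensional Hilbert space $K$, with $\dim K\leq 2|S|=4n+6$, a unitary colligation $\Sigma=(U,K,\mu)$, and an analytic contraction-valued transfer function $G=W_\Sigma:R\to M_2(\mathbb{C})$ that agrees with $F$ on $S$.

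By Proposition \ref{pro:Tight}, the uniqueness property of $S$ forces $G=F$ identically on $R$; thus $F=W_\Sigma$ as required, with $K$ of dimension at most $4n+6$. For the integral formula, Proposition \ref{pro:TransferContractive} gives
\[
I-W_{\Sigma}(z)W_{\Sigma}(w)^{*}=\mathbf{C}\bigl(I-\Phi(z)\mathbf{A}\bigr)^{-1}\bigl[I-\Phi(z)\Phi(w)^{*}\bigr]\bigl(I-\Phi(w)\mathbf{A}\bigr)^{*-1}\mathbf{C}^{*},
\]
and the discussion immediately following that proposition rewrites the right-hand side as $\int_\Pi (1-\psi_p(z)\overline{\psi_p(w)})H_p(z)H_p(w)^* d\mu(p)$, where $H(w)=(I-\mathbf{A}^*\Phi(w)^*)^{-1}\mathbf{C}^*$ takes values in $L^2(\mu)\otimes M_{4n+6,2}(\mathbb{C})$ and is analytic in $w$. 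Substituting $F=W_\Sigma$ gives the claimed representation for all $z,w\in R$.

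The only subtle point is verifying that the hypotheses of the three propositions are compatible: Proposition \ref{pro:HerglotzRep} needs $F(b)=0$ (supplied by condition (1) of the standard zero set), Proposition \ref{pro:Tight} needs the full standard zero set condition (so that the rank-$(2n+2)$ analysis of $I-VV^*$ at the zeros of $F^*$ produces enough linearly independent constraints), and Proposition \ref{pro:TransferRepn} needs a positive kernel $\Gamma$ on a finite set. None of these steps is an obstacle in itself; the slight care required is to note that the colligation produced by Proposition \ref{pro:TransferRepn} uses the \emph{same} measure $\mu$ given by Proposition \ref{pro:HerglotzRep}, so the single measure $\mu$ underlies both the finite-set data and the realisation, and the dimension bound $\dim K\leq 2|S|=4n+6$ comes directly from the quoted statement of Proposition \ref{pro:TransferRepn}.
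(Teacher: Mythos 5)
Your proposal is correct and follows essentially the same route as the paper: Proposition \ref{pro:Tight} for the uniqueness set $S$, Proposition \ref{pro:HerglotzRep} for the finite-set Herglotz representation, Proposition \ref{pro:TransferRepn} for the realisation with $\dim K\leq 2|S|=4n+6$, and uniqueness to upgrade agreement on $S$ to $F=W_{\Sigma}$ everywhere. The only (immaterial) difference is in the last step: you extract $H_p$ from the explicit transfer-function formula following Proposition \ref{pro:TransferContractive}, whereas the paper factors the positive kernel via Kolmogorov's theorem; both are valid and already available in the paper's toolkit.
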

\begin{proof}
Using Proposition \vref{pro:Tight}, choose a finite set $S\subseteq R$
such that if $G:R\to M_{2}(\mathbb{C})$ is analytic and contraction
valued, and $G(z)=F(z)$ for $z\in S$, then $G=F$. Using Proposition
\vref{pro:HerglotzRep}, we have a probability measure $\mu$ and
a positive kernel $\Gamma:S\times S\times\Pi\to M_{2}(\mathbb{C})$
such that \[
I-F(z)F(w)^{*}=\int_{\Pi}\left(1-\psi_{p}(z)\overline{\psi_{p}(w)}\right)\Gamma(z,\, w;\, p)\, d\mu(p)\]
 for all $z,\, w\in S$.

By Proposition \vref{pro:TransferRepn}, there exists a unitary colligation
$\Sigma=(U,\, K,\,\mu)$ so that $K$ is at most $4n+6$ dimensional,
and $W_{\Sigma}(z)=F(z)$ for $z\in S$. However, our choice of $S$
gives $W_{\Sigma}=F$ everywhere. We know $\Gamma(z,\, w;\, p)=H_{p}(z)H_{p}(w)^{*}$
for some $H_{p}$ by \cite[Thm. 2.62]{AglerPick}. 
\end{proof}
\begin{thm}
\label{thm:KSpan}Suppose $F$ is a $2\times2$ matrix-valued function
analytic in a neighbourhood of $R$, which is unitary valued on $B$,
with a standard zero set, and $\rho_{F}=1$, and is represented as
in Theorem \vref{thm:CombinedTightRepn}. Let $a_{2n+1}=a_{2n+2}=b$,
$\gamma_{2n+1}=e_{1}$, and $\gamma_{2n+2}=e_{2}$. Then there exists
a set $E$ of $\mu$ measure zero, such that for $p\notin E$, for
each $v\in\mathbb{C}^{4n+6}$, and for $l=0,\,1,\,\ldots,\, n$, the
vector function $H_{p}(\cdot)vK^{b}(\cdot,\, z_{l})$ is in the span
of $\left\{ K^{b}(\cdot,\, a_{j})\gamma_{j}\right\} $, where $z_{0}(p)(=b),\, z_{1}(p),\,\ldots,\, z_{n}(p)$
are the zeroes of $\psi_{p}$. Consequently, $H_{p}$ is analytic
on $R$ and extends to a meromorphic function on $Y$. 
\end{thm}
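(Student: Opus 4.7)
The plan is to identify the matrix-valued kernel $(I-F(z)F(w)^*)K^b(z,w)$ as the reproducing kernel of $\ker V^*\subseteq\mathbb{H}_2^2$, where $V$ is multiplication by $F$, and then decompose the scalar factor appearing in the integrand of the representation as the reproducing kernel of a model space. A Radon--Nikodym / disintegration argument then extracts the desired pointwise (in $p$) conclusion from the integral identity.

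First, multiply the representation of Theorem \ref{thm:CombinedTightRepn} through by the scalar Fay kernel $K^b(z,w)$:
\[
\mathfrak{F}(z,w):=(I-F(z)F(w)^*)K^b(z,w)=\int_{\Pi}\bigl(1-\psi_p(z)\overline{\psi_p(w)}\bigr)K^b(z,w)\,H_p(z)H_p(w)^*\,d\mu(p).
\]
Since $I-VV^*$ is the orthogonal projection of $\mathbb{H}_2^2$ onto $\ker V^*$ and $K^b(z,w)I_2$ is the reproducing kernel of $\mathbb{H}_2^2$, the left hand side is exactly the reproducing kernel of $\ker V^*$. By the proof of Proposition \ref{pro:Tight}, $\ker V^*$ is $(2n+2)$-dimensional and spanned by $\{K^b(\cdot,a_j)\gamma_j\}_{j=1}^{2n+2}$. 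Next, observe that $\kappa_p(z,w):=(1-\psi_p(z)\overline{\psi_p(w)})K^b(z,w)$ is the reproducing kernel of the model space $\mathcal{M}_p:=\mathbb{H}^2(R,\omega_b)\ominus\psi_p\mathbb{H}^2(R,\omega_b)$, which has dimension $n+1$ and, outside a $\mu$-null set where the zeros coincide, is spanned by $\{K^b(\cdot,z_l(p))\}_{l=0}^{n}$. Choose a measurable-in-$p$ orthonormal basis $\{f_l^p\}_{l=0}^n$ of $\mathcal{M}_p$ via Gram--Schmidt, so that $\kappa_p(z,w)=\sum_{l=0}^{n}f_l^p(z)\overline{f_l^p(w)}$, yielding the factorisation
\[
\kappa_p(z,w)H_p(z)H_p(w)^*=\Phi_p(z)\Phi_p(w)^*,\qquad \Phi_p(z)(e_l\otimes v):=f_l^p(z)H_p(z)v,
\]
where $\Phi_p(z):\mathbb{C}^{n+1}\otimes\mathbb{C}^{4n+6}\to\mathbb{C}^2$.

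Now apply the disintegration. The identity $\mathfrak{F}(z,w)=\int_\Pi \Phi_p(z)\Phi_p(w)^*\,d\mu(p)$ exhibits the RKHS of $\mathfrak{F}$ as the range of the Aronszajn map $G\mapsto\int_\Pi \Phi_p(\cdot)G(p)\,d\mu(p)$ from $L^2(\mu;\mathbb{C}^{(n+1)(4n+6)})$ into $\mathbb{H}_2^2$. Choosing $G=\chi_A\cdot(e_l\otimes v)$ for an arbitrary measurable $A\subseteq\Pi$ and using $\mathcal{H}(\mathfrak{F})=\ker V^*$ gives $\int_A f_l^p(\cdot)H_p(\cdot)v\,d\mu(p)\in\ker V^*$. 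Letting $Q$ denote the orthogonal projection of $\mathbb{H}_2^2$ onto the finite-dimensional complement of $\ker V^*$ and interchanging $Q$ with the Bochner integral, $\int_A Q(f_l^p(\cdot)H_p(\cdot)v)\,d\mu(p)=0$ for every measurable $A\subseteq\Pi$, whence Lebesgue differentiation forces $Q(f_l^p(\cdot)H_p(\cdot)v)=0$ for $\mu$-a.e.\ $p$. Passing to a countable dense collection of pairs $(l,v)$ and invoking continuity in $v$ produces a single $\mu$-null exceptional set working for all $l\in\{0,\ldots,n\}$ and $v\in\mathbb{C}^{4n+6}$ simultaneously, and since $\textrm{Span}\{f_l^p\}_{l=0}^n=\textrm{Span}\{K^b(\cdot,z_l(p))\}_{l=0}^n$, the conclusion transfers to the functions $K^b(\cdot,z_l(p))H_p(\cdot)v$ as required.

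The closing statement is immediate: the $l=0$ case combined with $K^b(\cdot,b)\equiv 1$ gives $H_p(\cdot)v\in\textrm{Span}\{K^b(\cdot,a_j)\gamma_j\}_{j=1}^{2n+2}$ outside the exceptional null set, and each Fay kernel $K^b(\cdot,a_j)$ is analytic on $R$ and extends meromorphically to $Y$ by Theorem \ref{thm:FayKernel}, so the same is true of $H_p$. The main technical hurdle is the measurability of $p\mapsto f_l^p$, which requires continuous dependence of the zeros $z_l(p)$ on $p$ followed by a Borel-measurable Gram--Schmidt construction, together with the careful application of Lebesgue differentiation in the finite-dimensional Banach space $(\ker V^*)^{\perp}\subseteq\mathbb{H}_2^2$; the underlying principle is simply that a Bochner-integrable function whose integral vanishes on every measurable set must vanish $\mu$-a.e.
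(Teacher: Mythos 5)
Your strategy is the paper's argument recast in RKHS language: identify $(I-F(z)F(w)^*)K^b(z,w)$ as the reproducing kernel of $\ker V^*=\mathrm{span}\{K^b(\cdot,a_j)\gamma_j\}$, note that $(1-\psi_p(z)\overline{\psi_p(w)})K^b(z,w)$ is a positive kernel whose associated space contains the $K^b(\cdot,z_l(p))$, and localize in $p$ by a vanishing-integral argument. The paper does exactly this with finite Pick matrices $M_Q$, $P_Q(p)$, $N_Q(p)$ and range inclusions, so the underlying ideas match. There is, however, one genuine gap.

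The gap is at the Bochner-integral step. You treat $p\mapsto f_l^p(\cdot)H_p(\cdot)v$ as a Bochner-integrable $\mathbb{H}_2^2$-valued map, which presupposes that for $\mu$-a.e.\ \emph{fixed} $p$ the assignment $z\mapsto H_p(z)$ is a genuine function in $\mathbb{H}^2$ (defined for all $z$ simultaneously, measurable in $z$, with $L^2$ boundary values). But Theorem \ref{thm:CombinedTightRepn} only supplies $H$ as an analytic map $R\to L^2(\mu)\otimes M_{4n+6,\,2}(\mathbb{C})$: for each fixed $z$, $H(z)$ is an equivalence class of functions of $p$, with the exceptional null set depending on $z$, and there is no a priori version of $H$ that is analytic (or even everywhere defined) in $z$ for a.e.\ fixed $p$. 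Producing such a version is precisely the ``Consequently, $H_p$ is analytic on $R$\ldots'' part of the statement, so the argument as written is circular at this point. The paper's proof is structured to avoid this: it works with finite sets $Q_m$ so that only countably many values of $z$ are ever in play, shows the coefficients $c_i^m(p)$ stabilize, and only then identifies $H_p$ with an analytic function on a determining set. Your argument can be repaired by running the vanishing-integral step against a countable determining set of points (equivalently, testing against countably many kernel functions rather than projecting in $\mathbb{H}_2^2$) — but that repair essentially reproduces the paper's proof.

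Two smaller points. The claim that the zeroes of $\psi_p$ coincide only on a $\mu$-null set is unjustified and also unnecessary: you only need that each $K^b(\cdot,z_l(p))$ lies in $\mathbb{H}^2\ominus\psi_p\mathbb{H}^2$, which holds regardless of multiplicities since $\langle\psi_p h,\,K^b(\cdot,z_l(p))\rangle=\overline{\psi_p(z_l(p))h(z_l(p))}=0$. Also, the orthogonal complement of $\ker V^*$ in $\mathbb{H}_2^2$ is infinite-dimensional (it is $\ker V^*$ itself that is $(2n+2)$-dimensional); this does not harm the argument, but the projection $Q$ is not onto a finite-dimensional space.
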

\begin{proof}
We showed in Proposition \vref{pro:Tight} that given a finite $Q\subseteq R$,
\[
M_{Q}=\left(\left(I-F(z)F(w)^{*}\right)K^{b}(z,\, w)\right)_{z,\, w\in Q}\]
 has rank at most $2n+2$, and that the range of $M_{Q}$ lies in
\begin{equation}
\mathfrak{M}:=\text{span}\left\{ \left(K^{b}(z,\, a_{i})\gamma_{i}\right)_{z\in Q}:\, i=1,\,\ldots,\,2n+2\right\} \,,\label{eq:BigM}\end{equation}
 thinking of $\left(K^{b}(z,\, a_{i})\gamma_{i}\right)_{z\in Q}$
as a column vector indexed by $Q$.

We then apply Theorem \vref{thm:CombinedTightRepn} to give \[
M_{Q}=\left(\int_{\Pi}H_{p}(z)\left(1-\psi_{p}(z)\overline{\psi_{p}(w)}\right)K^{b}(z,\, w)H_{p}(w)^{*}\, d\mu(p)\right)_{z,\, w\in Q}\,.\]

For each $p$, we define an operator $M_{p}\in\mathcal{B}(\mathbb{H}^{2})$
by \[
\left(M_{p}\, f\right)(x)=\psi_{p}(x)\, f(x)\,.\]
 Multiplication by $\psi_{p}$ is isometric on $\mathbb{H}^{2}$,
so $1-M_{p}M_{p}^{*}\geq0$, and so $\left(1-M_{p}M_{p}^{*}\right)\otimes E\geq0$,
where $E$ is the $m\times m$ matrix with all entries equal to 1.
From the reproducing property of $K^{b}$, we see that $M_{p}^{*}K^{b}(\cdot,\, z)=\overline{\psi_{p}(z)}K^{b}(\cdot,\, z)$.
Thus, if $Q$ is a set of $m$ points in $R$, and $c$ is the vector
$\left(K^{0}(\cdot,\, w)\right)_{w\in Q}$, then the matrix \[
P_{Q}(p)=\left\langle \left[(I-M_{p}M_{p}^{*})\otimes E\right]c,\, c\right\rangle =\left(\left[1-\psi_{p}(z)\overline{\psi_{p}(w)}\right]K^{b}(z,\, w)\right)_{z,\, w\in Q}\geq0\,.\]

If we set $\widetilde{Q}=Q\cup\{z_{j}\}$ for any $j=0,\,1,\,\ldots,\, n$,
then $P_{\widetilde{Q}}(p)\geq0$. Further, the upper $m\times m$
block equals $P_{Q}(p)$ and the right $m\times1$ column is $\left(K^{b}(z,\, z_{j}(p))\right)_{z\in Q}$.
Hence, as a vector, \[
\left(K^{b}(z,\, z_{j}(p))\right)_{z\in Q}\in\text{ran}P_{Q}(p)^{1/2}=\text{ran}P_{Q}(p)\,,\]
 for $j=0,\,1,\,\ldots,\, n$.

Since $P_{Q}\geq0$, \[
N_{Q}(p):=\left(H_{p}(z)\left(1-\psi_{p}(z)\overline{\psi_{p}(w)}\right)K^{b}(z,\, w)H_{p}(w)^{*}\right)_{z,\, w\in Q}\]
 is also positive semi-definite for each $p$. If $M_{Q}x=0$, then
\[
0=\int_{\Pi}\left\langle N_{Q}(p)x,\, x\right\rangle \, d\mu(p)\,,\]
 so that $\left\langle N_{Q}(p)x,\, x\right\rangle =0$ for almost
all $p$. It follows that $N_{Q}(p)x=0$ almost everywhere. Choosing
a basis for the kernel of $M_{Q}$, there is a set $E_{Q}$ of $\mu$
measure zero so that for $p\notin E_{Q}$, the kernel of $M_{Q}$
is a subspace of the kernel of $N_{Q}(p)$. For such $p$, the range
of $N_{Q}(p)$ is a subspace of the range of $M_{Q}$, so the rank
of $N_{Q}(p)$ is at most $2n+2$.

Further, if we let $D_{Q}(p)$ denote the diagonal matrix with ($2\times(4n+6)$
block) entries given by \[
D_{Q}(p)_{z,\, w}=\begin{cases}
H_{p}(z) & z=w\\
0 & z\neq w\end{cases}\,.\]
 then $N_{Q}(p)=D_{Q}(p)\, P_{Q}(p)\, D_{Q}(p)^{*}$. Since $P_{Q}(p)$
is positive semi-definite, we conclude that the range of $D_{Q}(p)\, P_{Q}(p)$
is in the range of $M_{Q}$. Therefore, since $\left(K^{b}(z,\, z_{j}(p))\right)_{z\in Q}$
is in the range of $P_{Q}(p)$, $\left(H_{p}(z)\, v\, K^{b}(z,\, z_{j}(p))\right)_{z\in Q}$
is in the range of $M_{Q}$ for every $v\in\mathbb{C}^{4n+6}$, and
$j=0,\,1,\,\ldots,\, n$.

Now suppose $Q_{m}\subseteq R$ is a finite set with \[
Q_{m}\subseteq Q_{m+1}\,,\quad Q_{0}=\left\{ a_{1},\,\ldots,\, a_{2n},a_{2n+1}(=b)\right\} \,,\]
 and \[
\mathcal{D}=\bigcup_{m\in\mathbb{N}}Q_{m}\]
 a determining set; that is, an analytic function is uniquely determined
by its values on $\mathcal{D}$. Since \[
\left(H_{p}(z)\, v\, K^{b}(z,\, z_{j}(p))\right)_{z\in Q_{m}}\in\text{ran}M_{Q_{m}}\subseteq\mathfrak{M}\,,\]
 we see that there are constants $c_{i}^{m}(p)$ such that \begin{equation}
H_{p}(z)\, v\, K^{b}(z,\, z_{j}(p))=\sum_{i=1}^{2n+2}c_{i}^{m}(p)\, K^{b}(z,\, a_{i})\,\gamma_{i}\,,\quad z\in Q_{m}\,.\label{eq:HpInKb}\end{equation}
 By linear independence of the $K^{b}(\cdot,\, a_{i})$s, the $c_{i}^{m}(p)$s
are uniquely determined when $n=0,\,1,\,\ldots$ by this formula.
Since $Q_{m+1}\supseteq Q_{m}$, we see that $c_{i}^{m+1}(p)=c_{i}^{m}(p)$
for all $m$, so there are unique constants $c_{i}(p)$ such that
\[
H_{p}(z)\, v\, K^{b}(z,\, z_{j}(p))=\sum_{i=1}^{2n+2}c_{i}(p)\, K^{b}(z,\, a_{i})\,\gamma_{i}\,,\quad z\in\mathcal{D}\,.\]
 Now, by considering this equation when $j=0$, and using the fact
that $K^{b}(\cdot,\, b)\equiv1$, we see that $H_{p}$ agrees with
an analytic function on a determining set. We can therefore assume
that $H_{p}$ is analytic for each $p\notin E$, and that \eqref{eq:HpInKb}
holds throughout $R$. Also, since the $K^{b}(\cdot,\, a_{i})$s extend
to meromorphic functions on $Y$, so must $H_{p}$. 
\end{proof}

\subsection{Diagonalisation}

\begin{lem}
\label{lem:UnitaryDiagTrick}Suppose $F$ is a matrix-valued function
on $R$ whose determinant is not identically zero. If there exists
a $2\times2$ unitary matrix $U$ and scalar valued functions $\phi_{1},\,\phi_{2}:R\to\mathbb{C}$
such that $F(z)F(w)^{*}=UD(z)D(w)^{*}U^{*}$, where \[
D:=\left(\begin{array}{cc}
\phi_{1} & 0\\
0 & \phi_{2}\end{array}\right)\,,\]
 then there exists a unitary matrix $V$ such that $F=UDV$. 
\end{lem}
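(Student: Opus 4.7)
The plan is to set $G := U^{*}F$, so that the hypothesis $F(z)F(w)^{*} = UD(z)D(w)^{*}U^{*}$ becomes $G(z)G(w)^{*} = D(z)D(w)^{*}$ for all $z,w\in R$, and the desired conclusion $F = UDV$ becomes $G = DV$ for some constant $2\times 2$ unitary $V$. A preliminary observation: since $\det F \not\equiv 0$ and $\det D = \phi_{1}\phi_{2}$, neither $\phi_{1}$ nor $\phi_{2}$ can be identically zero.

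The key step is to analyse the rows of $G$. Let $r_{1}(z),\,r_{2}(z)\in\mathbb{C}^{2}$ be the vectors whose transposes are the first and second rows of $G(z)$. Unpacking the four scalar entries of $G(z)G(w)^{*} = D(z)D(w)^{*}$ gives
\[
\langle r_{1}(z),r_{1}(w)\rangle = \phi_{1}(z)\overline{\phi_{1}(w)},\qquad \langle r_{2}(z),r_{2}(w)\rangle = \phi_{2}(z)\overline{\phi_{2}(w)},\qquad \langle r_{1}(z),r_{2}(w)\rangle = 0.
\]
The first identity presents $\langle r_{1}(z),r_{1}(w)\rangle$ as a rank-one positive kernel. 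Picking any $z_{0}$ at which $\phi_{1}(z_{0})\ne 0$ and setting $v_{1} := r_{1}(z_{0})/\phi_{1}(z_{0})$, one checks from $\langle r_{1}(z_{0}),r_{1}(z_{0})\rangle = |\phi_{1}(z_{0})|^{2}$ that $\|v_{1}\| = 1$; then the Cauchy--Schwarz equality case applied to the pair $\langle r_{1}(z),r_{1}(z_{0})\rangle = \phi_{1}(z)\overline{\phi_{1}(z_{0})}$ together with $\|r_{1}(z)\|^{2} = |\phi_{1}(z)|^{2}$ forces $r_{1}(z) = \phi_{1}(z)v_{1}$ on all of $R$. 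The same argument produces a unit vector $v_{2}\in\mathbb{C}^{2}$ with $r_{2}(z) = \phi_{2}(z)v_{2}$.

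Finally, substituting these expressions into the off-diagonal identity yields $\phi_{1}(z)\overline{\phi_{2}(w)}\langle v_{1},v_{2}\rangle = 0$ for all $z,w$; because $\phi_{1}$ and $\phi_{2}$ are not identically zero, we conclude $\langle v_{1},v_{2}\rangle = 0$. Thus the matrix
\[
V := \begin{pmatrix} v_{1}^{T}\\ v_{2}^{T}\end{pmatrix}
\]
has orthonormal rows, so is unitary, and by construction $G(z) = D(z)V$, whence $F = UDV$. There is no substantive obstacle in the argument: the content is simply the essentially-unique factorisation of a rank-one positive kernel $\phi(z)\overline{\phi(w)}$, and the only point requiring care is that one must divide by $\phi_{j}$ at a point where it is nonzero, which is exactly what the non-vanishing of $\det F$ secures.
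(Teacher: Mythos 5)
Your proof is correct. It does, however, take a slightly different route from the paper's. The paper (following \cite{DritschelRationalDilation}) simply sets $V=D(z)^{-1}U^{*}F(z)$ and checks from the hypothesis that $V(z)V(w)^{*}=I$ for all $z,w$, which forces $V$ to be a constant unitary; this requires inverting $D(z)$, so it works only off the zero set of $\phi_{1}\phi_{2}$ and one must then note that the identity $F=UDV$ extends to the exceptional points (trivially, since both sides are defined everywhere and the zero set is where the rows vanish anyway). You instead factor each row of $G=U^{*}F$ as a rank-one positive kernel: the equality case of Cauchy--Schwarz in $\langle r_{j}(z),r_{j}(z_{0})\rangle=\phi_{j}(z)\overline{\phi_{j}(z_{0})}$ together with $\Vert r_{j}(z)\Vert^{2}=|\phi_{j}(z)|^{2}$ pins down $r_{j}(z)=\phi_{j}(z)v_{j}$ globally, including at zeros of $\phi_{j}$, and the off-diagonal identity gives $\langle v_{1},v_{2}\rangle=0$. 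This buys you a proof that never divides by $D(z)$ and needs no continuation argument across the zeros of the $\phi_{j}$; the cost is a slightly longer computation. Both arguments use the non-vanishing of $\det F$ in the same essential way, namely to guarantee a point where $\phi_{1}\phi_{2}\neq0$.
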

\begin{proof}
The proof is as in \cite{DritschelRationalDilation}. We let $V=D(z)^{-1}U^{*}F(z)$,
which turns out to be constant and unitary. 
\end{proof}
\begin{thm}
\label{thm:rho=00003D00003D1=00003D00003D>diag}Suppose $F$ is a
$2\times2$ matrix-valued function which is analytic in a neighbourhood
of $R$, unitary valued on $B$, and has a standard zero set $\left(a_{j},\,\gamma_{j}\right)$,
$j=1,\,\ldots,\,2n$. Assume further that the $\left(a_{j},\,\gamma_{j}\right)$
have the property that if $h$ satisfies \[
h=\sum_{j=1}^{2n}c_{j}K^{b}(\cdot,\, a_{j})\gamma_{j}+v\,,\]
 for some $c_{1},\,\ldots,\, c_{2n}\in\mathbb{C}$ and $v\in\mathbb{C}^{2}$,
and $h$ does not have a pole at $P_{1},\,\ldots,\, P_{n}$, then
$h$ is constant.

Under these conditions, if $\rho_{F}=1$, then $F$ is diagonalisable,
that is, there exists unitary $2\times2$ matrices $U$, and $V$
and analytic functions $\phi_{1},\,\phi_{2}:R\to\mathbb{C}$ such
that \[
F=U\left(\begin{array}{cc}
\phi_{1} & 0\\
0 & \phi_{2}\end{array}\right)V=UDV\,.\]

\end{thm}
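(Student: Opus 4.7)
The plan is to establish a factorization $F(z)F(w)^* = U D(z) D(w)^* U^*$ for a unitary $U$ and diagonal analytic $D$, after which Lemma \ref{lem:UnitaryDiagTrick} supplies a unitary $V$ with $F = UDV$. Since $\rho_F = 1$ and $F$ has a standard zero set, Theorem \ref{thm:CombinedTightRepn} furnishes a probability measure $\mu$ on $\Pi$ and an analytic $H_p(z)$ so that
\[
I - F(z)F(w)^* = \int_\Pi \bigl(1-\psi_p(z)\overline{\psi_p(w)}\bigr) H_p(z) H_p(w)^* \, d\mu(p),
\]
and Theorem \ref{thm:KSpan} then says that, for $\mu$-a.e.\ $p$ and every $v$, the function $H_p(\cdot)\,v\,K^b(\cdot, z_l(p))$ lies in $\mathrm{span}\{K^b(\cdot, a_j)\gamma_j\}_{j=1}^{2n+2}$ for $l = 0, 1, \ldots, n$.

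The first key step is to show $H_p$ is constant in $z$ for $\mu$-a.e.\ $p$. The $l=0$ case (where $K^b(\cdot,b)\equiv 1$) yields $H_p(z)v = \sum_{j=1}^{2n} c_j K^b(z,a_j)\gamma_j + v_0$ for appropriate constants. For each $l\ge 1$, comparing Laurent coefficients at $P_k$ in $H_p(z)v\,K^b(z,z_l(p))\in\mathrm{span}\{K^b(\cdot,a_j)\gamma_j\}$ shows that the left side carries an order-two pole there with leading coefficient $R_k(z_l(p))\sum_j c_j R_k(a_j)\gamma_j$, whereas the right side has only simple poles. Off a $\mu$-null set $R_k(z_l(p))\neq 0$, so $\sum_j c_j R_k(a_j)\gamma_j = 0$ at every $k$; equivalently, $h(z) := \sum_{j=1}^{2n} c_j K^b(z,a_j)\gamma_j + v_0$ has no poles at any $P_k$. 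The theorem's hypothesis forces $h$ to be constant, and distinctness of the poles $Ja_j$ then forces each $c_j = 0$. So $H_p(z) \equiv C_p$ is independent of $z$.

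With $H_p = C_p$ constant, the $l\ge 1$ constraint becomes $C_p v \, K^b(\cdot, z_l(p)) \in \mathrm{span}\{K^b(\cdot,a_j)\gamma_j\}$. The left side has a pole at $Jz_l(p)$, but the right side's poles lie only at the $P_k$ and the $Ja_j$, so either $C_p = 0$ (a void contribution), or $z_l(p) = a_{j_l}$ for each $l$, and matching the residue direction at $Ja_{j_l}$ forces $\mathrm{ran}(C_p) \subseteq \mathbb{C}\gamma_{j_l}$ for every $l$, making $\gamma_{j_1}, \ldots, \gamma_{j_n}$ collinear. The standard-zero-set hypothesis (no $n+1$ of the $\gamma_j$ collinear) allows at most $n$ vectors on any single line; since there are $2n$ of the $\gamma_j$ in total, they can form at most two disjoint size-$n$ collinear blocks, so the support of $\mu$ contains at most two points. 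Evaluating the representation at $z = w = b$ gives $I = \sum_i \mu_i C_{p^{(i)}} C_{p^{(i)}}^*$, a sum of positive rank-one matrices equal to the identity on $\mathbb{C}^2$, which forces exactly two points of support $p^{(1)}, p^{(2)}$ with orthogonal unit images $\xi_1, \xi_2$. Taking $U = [\xi_1 \mid \xi_2]$, the representation collapses to
\[
F(z)F(w)^* = U\,\mathrm{diag}\bigl(\psi_{p^{(1)}}(z)\overline{\psi_{p^{(1)}}(w)},\;\psi_{p^{(2)}}(z)\overline{\psi_{p^{(2)}}(w)}\bigr)\,U^*,
\]
and Lemma \ref{lem:UnitaryDiagTrick} delivers a unitary $V$ with $F = U D V$ where $D = \mathrm{diag}(\psi_{p^{(1)}}, \psi_{p^{(2)}})$.

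The main obstacle is the Laurent-coefficient bookkeeping that pins down $H_p$ as constant: one needs $R_k(z_l(p)) \neq 0$ and no accidental coincidences among $\{P_k, Ja_j, Jz_l(p)\}$ off a $\mu$-null set, so that the order-by-order pole comparison at each $P_k$ and $Jz_l(p)$ genuinely applies. Given that, the combinatorial collapse of $\mathrm{supp}(\mu)$ to two points, followed by the linear-algebraic orthogonality $\xi_1 \perp \xi_2$ forced by the identity $\sum_i \mu_i C_{p^{(i)}} C_{p^{(i)}}^* = I$ in $M_2(\mathbb{C})$, completes the diagonalization.
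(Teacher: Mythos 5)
The first half of your argument is sound and is in substance identical to the paper's: after invoking Theorems \ref{thm:CombinedTightRepn} and \ref{thm:KSpan}, your residue bookkeeping at the $P_k$ (forcing $h=H_p(\cdot)v$ to be pole-free there, hence constant by the hypothesis on the $(a_j,\gamma_j)$) and the subsequent pole-matching at $Jz_l(p)$ (forcing $z_l(p)=a_{j_l}$ and $\mathrm{ran}(C_p)\subseteq\mathbb{C}\gamma_{j_l}$) is exactly the content of the second part of Theorem \ref{thm:TheEpsilon}, which the paper simply cites at this point.

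The gap is in the collapse of the integral. From ``each complex line contains at most $n$ of the $\gamma_j$'' you conclude that ``the support of $\mu$ contains at most two points.'' That is a non sequitur, and false in general: the set of $p\in\Pi$ for which $\psi_p$ has a \emph{fixed} zero divisor is a one-parameter family (inner functions with the same zeroes differ by a unimodular constant, which moves the level set $\psi_p^{-1}\{1\}$, i.e.\ moves $p$), and $\mu$ may be spread continuously over it. What the argument actually requires is that $\Pi\setminus\Pi_0$ splits into at most two classes on each of which $\psi_p(z)\overline{\psi_p(w)}$ is \emph{literally the same function}; then each class integrates to a single rank-one term $\psi_i(z)\overline{\psi_i(w)}\,h_ih_i^*$ no matter how $\mu$ is distributed, and your concluding orthogonality step applies. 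Your pole analysis does not deliver this: it only shows that the non-$b$ zeroes of $\psi_p$ lie in $\{a_j:\gamma_j\in\mathrm{ran}(C_p)\}$, leaving the multiplicity at $b$ and the choice of subset undetermined. Nothing you have written rules out two $p$'s on the same line with zero divisors $2b+a_1+\cdots$ and $b+a_1+a_2+\cdots$ (different $\psi_p\overline{\psi_p}$), nor more than two lines arising when classes use fewer than $n$ of the $\gamma_j$ each. The missing ingredient is the relation $F(a_j)^*\gamma_j=0$: putting $z=w=a_j$ into the representation gives $\gamma_j^*F(a_j)F(a_j)^*\gamma_j=\int_\Pi|\psi_p(a_j)|^2\|C_p^*\gamma_j\|^2\,d\mu=0$, so off a null set, for every $j$ either $\psi_p(a_j)=0$ or $C_p^*\gamma_j=0$. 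Since at most $n$ of the $\gamma_j$ can satisfy $C_p^*\gamma_j=0$ (they would be collinear) and $\psi_p$ has only $n$ zeroes besides $b$ while the $a_j$ are distinct, exactly $n$ of the $a_j$ are simple zeroes of $\psi_p$ and these exhaust its non-$b$ zeroes. This pins the zero divisor of $\psi_p$ down to one of exactly two possibilities, yields the two-class decomposition, and then the rest of your argument (and Lemma \ref{lem:UnitaryDiagTrick}) goes through.
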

\begin{proof}
By Theorem \vref{thm:KSpan}, we may assume that except on a set $E$
of measure zero, if $h$ is a column of some $H_{p}$, then $h(\cdot)K^{b}(\cdot,\, z_{l}(s))\in\mathfrak{M}$
for $l=0,\,1,\,\ldots,\, n$.%
\footnote{Here $z_{0}(p)=b,\, z_{1}(p),\,\ldots,\, z_{n}(p)$ are the zeroes
of $\psi_{p}$, and $\mathfrak{M}$ is as defined in \eqref{eq:BigM}\vpageref{eq:BigM}.%
}

By hypothesis, $h$ (and so $H_{p}$) is constant. From Remark \vref{rem:DistinctZeros},
we can assume at least one of the zeroes of $\psi_{p}$ (say $z_{1}(p)$)
is not $b$. Thus, using the proof of the second part of Theorem \vref{thm:TheEpsilon},
we can show that if $h$ is not zero, then $z_{1}(p)=a_{j_{1}(p)}$
for some $j_{1}(p)$, and $h$ is a multiple of $\gamma_{j_{1}(p)}$.
Thus, every column of $H_{p}$ is a multiple of $\gamma_{j_{1}(p)}$.

Theorem \vref{thm:CombinedTightRepn} gives us\[
I-F(z)F(w)^{*}=\int_{\Pi}\left(1-\psi_{p}(z)\overline{\psi_{p}(w)}\right)H_{p}H_{p}^{*}d\mu(p)\,,\]
 and substituting $w=b$ gives\[
I=\int_{\Pi}H_{p}H_{p}^{*}d\mu(p)\]
 so \begin{equation}
F(z)F(w)^{*}=\int_{\Pi}\psi_{p}(z)\overline{\psi_{p}(w)}H_{p}H_{p}^{*}d\mu(p)\,.\label{eq:FinHs}\end{equation}

Since the columns of $H_{p}$ are all multiples of $\gamma_{j_{1}(p)}$,
$H_{p}H_{p}^{*}$ is rank one, and so can be written as $G(p)G(p)^{*}$
for a single vector $G(p)\in\mathbb{C}^{2}$. Consequently, \begin{equation}
F(z)F(w)^{*}=\int_{\Pi}\psi_{p}(z)\overline{\psi_{p}(w)}G(p)G(p)^{*}d\mu(p)\,.\label{eq:FinGs}\end{equation}

Since $F(a_{j})^{*}\gamma_{j}=0$ for all $j$, \eqref{eq:FinGs}
gives \[
0=\gamma_{j}^{*}F(a_{j})F(a_{j})^{*}\gamma_{j}=\int_{\Pi}\left|\psi_{p}(a_{j})\right|^{2}\left\Vert G(p)\gamma_{j}^{*}\right\Vert ^{2}d\mu(p)\,,\]
 so for each $j$, $\overline{\psi_{p}(a_{j})}G(p)^{*}\gamma_{j}=0$
for almost every $p$. So, apart from a set $Z_{0}\subseteq\Pi$ of
measure zero, $\overline{\psi_{p}(a_{j})}G(p)^{*}\gamma_{j}=0$ for
all $p$ and all $j$. Thus, by defining $G(p)=0$ for $p\in Z_{0}$,
we can assume that \eqref{eq:FinGs} holds and \[
\overline{\psi_{p}(a_{j})}G(p)^{*}\gamma_{j}=0\]
 for all values of $p$ and $j$.

Let $\Pi_{0}:=\left\{ p\in\Pi:\, G(p)=0\right\} $. If $p\notin\Pi_{0}$,
then for each $j$, either $\psi_{p}(a_{j})=0$ or $G(p)^{*}\gamma_{j}=0$.
Remember that $G_{p}$ is a multiple of $\gamma_{j_{1}(p)}$, and
no set of $n+1$ of the $\gamma_{j}$ all lie on the same line through
the origin. It follows that $\psi_{p}$ has zeroes at $b$, and $n$
of the $a_{j}$s (say $a_{j_{1}(p)},\,\ldots,\, a_{j_{n}(p)}$) and
$G(p)^{*}\gamma_{j}=0$ at $n$ of the $\gamma_{j}$s (say $\gamma_{j_{n+1}(p)},\,\ldots,\,\gamma_{j_{2n}(p)}$),
so these $\gamma_{j}$s must be orthogonal to $\gamma_{j_{1}(p)}$,
and so all lie on the same line through the origin. This tells us
that the zeroes of $\psi_{p}$ are precisely $b,\, a_{j_{1}(p)},\,\ldots,\, a_{j_{n}(p)}$,
so $z_{i}=a_{j_{i}(p)}$ for all $i$. We can also see that $\gamma_{j_{1}(p)},\,\ldots,\,\gamma_{j_{n}(p)}$
all lie on the same line through the origin, and so are orthogonal
to $\gamma_{j_{n+1}(p)},\,\ldots,\,\gamma_{j_{2n}(p)}$.

Let $\mathfrak{J}_{1}=\left\{ a_{j_{1}(p)},\,\ldots,\, a_{j_{n}(p)}\right\} $,
$\mathfrak{J}_{2}=\left\{ a_{j_{n+1}(p)},\,\ldots,\, a_{j_{2n}(p)}\right\} $,
let $\mathfrak{A}_{1}$ denote the one-dimensional subspace of $\mathbb{C}^{2}$
spanned by $\gamma_{j_{1}(p)}$ and $\mathfrak{A}_{2}$ denote the
one-dimensional space spanned by $\gamma_{j_{n+1}(p)}$.

If $q\notin\Pi_{0}$, then by arguing as above, either $G(q)\in\mathfrak{A}_{1}$
or $G(q)\in\mathfrak{A}_{2}$, and the zeroes of $\psi_{q}$ are in
$\mathfrak{J}_{2}$ or $\mathfrak{J}_{1}$ respectively. Hence, for
each $p$, one of the following must hold:
\begin{itemize}
\item (0): $G(p)=0$;
\item (1): $G(p)\in\mathfrak{A}_{1}$ and the zeroes of $\psi_{q}$ are
in $\mathfrak{J}_{2}\cup\{b\}$;
\item (2): $G(p)\in\mathfrak{A}_{2}$ and the zeroes of $\psi_{q}$ are
in $\mathfrak{J}_{1}\cup\{b\}$.
\end{itemize}
Define \begin{align*}
\Pi_{0}= & \left\{ p\in\Pi:\,\text{(0) holds}\right\} ,\\
\Pi_{1}= & \left\{ p\in\Pi:\,\text{(1) holds}\right\} ,\\
\Pi_{2}= & \left\{ p\in\Pi:\,\text{(2) holds}\right\} .\end{align*}
 If $p,\, q\in\Pi_{1}$ then $\psi_{p}$ and $\psi_{q}$ are equal,
up to multiplication by a unimodular constant, so we choose a $p^{1}\in\Pi_{1}$
and define $\psi_{1}=\psi_{p^{1}}$, so $\psi_{p}\overline{\psi_{p}}=\psi_{1}\overline{\psi_{1}}$
for all $p\in\Pi_{1}$. If $\Pi_{2}$ is non-empty, we do the same,
if not we define $\psi_{2}\equiv0$. We substitute this into \eqref{eq:FinHs}
to get \[
F(z)F(w)^{*}=h_{1}\psi_{1}(z)\overline{\psi_{1}(w)}h_{1}^{*}+h_{2}\psi_{2}(z)\overline{\psi_{2}(w)}h_{2}^{*}\,,\]
 where $h_{j}\in\mathfrak{A}_{j}$. Letting $z=w\in B$, we see that
$h_{1},\, h_{2}$ is an orthonormal basis for $\mathbb{C}^{2}$ (and
that $\psi_{2}\not\equiv0$), so we can apply Lemma \vref{lem:UnitaryDiagTrick},
and the result follows. 
\end{proof}

\section{The counterexample}

We now have all the tools we need to prove \prettyref{thm:ElTheoremGrande},
as introduced at the beginning of the paper. First, we constructed
$\Psi_{S,\mathbf{p}}$ in Lemma \vref{lem:PsiSp}, which is always
a $2\times2$ matrix-valued inner function. We then showed, in Lemma
\vref{lem:PsiSmp}, that there was a sequence $\Psi_{S_{m},\mathbf{p}}$,
such that each term had a standard zero set, with $S_{m}\neq S_{0}$
for all $m$, and such that both $S_{m}\to S_{0}$ and $\Psi_{S_{m},\mathbf{p}}\to\Psi_{S_{0},\mathbf{p}}$
as $m\to\infty$. We showed in Theorem \vref{thm:TheEpsilon}, that
if the zeroes $\left(a_{j},\,\gamma_{j}\right)$ of $\Psi_{S_{m},\mathbf{p}}$
are close enough to the zeroes of $\Psi_{S_{0},\mathbf{p}}$ (they
would be, for $m$ large enough, say $m=\mathbf{M}$) then any $\mathbb{C}^{2}$-valued
meromorphic function of the form \[
h(z)=\sum_{j=1}^{2n}c_{j}K^{b}(z,\, a_{j})\gamma_{j}+v\]
 with no poles at $P_{1},\,\ldots,\, P_{n}$ must be constant. Thus,
we take $\Psi=\Psi_{S_{\mathbf{M}},\mathbf{p}}$. Theorem \vref{thm:rho=00003D00003D1=00003D00003D>diag}
then tells us that if $\rho_{\Psi}=1$, then $\Psi$ is diagonalisable.
So if $\Psi$ is not diagonalisable, then $\rho_{\Psi}<1$. If $\rho_{\Psi}<1$,
Theorem \vref{thm:1.1Baby} tells us that there is an operator $T\in\mathcal{B}(H)$
for some $H$, such that the homomorphism $\pi:\mathcal{R}(X)\to\mathcal{B}(H)$
with $\pi(p/q)=p(T)\cdot q(T)^{-1}$ is contractive but not completely
contractive. Therefore, all that remains to be shown is that $\Psi$
is not diagonalisable.

\begin{thm}
$\Psi$ is not diagonalisable. 
\end{thm}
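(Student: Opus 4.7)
Suppose for contradiction that $\Psi = UDV$ with $U,V$ unitary $2\times 2$ matrices and $D = \mathrm{diag}(\phi_{1},\phi_{2})$ where $\phi_{1},\phi_{2}$ are scalar functions analytic in a neighbourhood of $R$. By Lemma \vref{lem:PsiSp}, $\Psi(b) = 0$ forces $\phi_{1}(b) = \phi_{2}(b) = 0$, while $\Psi(p_{0}^{-}) = I$ forces $U^{*}V^{*} = D(p_{0}^{-})$ to be a diagonal unitary. Absorbing the resulting unimodular constants into $D$, we may take $V = U^{*}$ and write $\Psi = U\tilde D U^{*}$ with $\tilde D = \mathrm{diag}(\tilde\phi_{1},\tilde\phi_{2})$, $\tilde\phi_{i}(b) = 0$ and $\tilde\phi_{i}(p_{0}^{-}) = 1$. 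Each $\tilde\phi_{i}$ is a scalar inner function on $R$; by reflection across $B$ it extends to a meromorphic function on the Schottky double $Y$ with poles in $J(R)$, and since $b\notin B$ produces a pole of $\tilde\phi_{i}$ in $J(R)\setminus B$, Proposition \vref{pro:TopRedBox} forces at least $n+1$ poles and hence at least $n+1$ zeroes. Because $\det\Psi = \tilde\phi_{1}\tilde\phi_{2}$ has exactly $2n+2$ zeroes (order two at $b$, simple at each $a_{j}$), each $\tilde\phi_{i}$ has exactly $n+1$ zeroes, so by the opening of Section \vref{sec:Test-Functions} the set $\tilde\phi_{i}^{-1}\{1\}$ contains exactly one point on each boundary curve $B_{j}$.

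I then apply the identity $\Psi(\mathbf p_{1})e_{1} = e_{1}$ from Lemma \vref{lem:PsiSp}. Writing $U^{*}e_{1} = \alpha e_{1} + \beta e_{2}$, this becomes $\tilde D(\mathbf p_{1})(\alpha e_{1} + \beta e_{2}) = \alpha e_{1} + \beta e_{2}$, equivalently $\alpha(\tilde\phi_{1}(\mathbf p_{1})-1) = \beta(\tilde\phi_{2}(\mathbf p_{1})-1) = 0$. If $\alpha = 0$ then $U$ is antidiagonal and a short computation gives $\Psi = \mathrm{diag}(\tilde\phi_{2},\tilde\phi_{1})$; if $\beta = 0$ then $U$ is diagonal and $\Psi = \tilde D$. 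In either case $\Psi$ would be diagonal. But $\Psi = \Psi_{S_{\mathbf M},\mathbf p}$ is \emph{not} diagonal: inspection of the formula for $H_{S,\mathbf p}$ shows its $(1,2)$-entry equals $\sum_{j=1}^{n}\tau_{j}(\mathbf p)(P^{j+})_{12}\bigl[\mathbb P(\cdot,\mathbf p_{j}) - \eta(\mathbf p_{j})\mathbb P(\cdot,\varpi(\mathbf p_{j}))\bigr]$, a combination of linearly independent Poisson kernels with strictly positive weights $\tau_{j}(\mathbf p)$. This is nonzero as soon as some $(P^{j+})_{12}\neq 0$, which must occur for $S_{\mathbf M}$ close to but distinct from $S_{0}$, since the only diagonal rank-one projection within distance $1$ of $P^{1+}$ is $P^{1+}$ itself. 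Hence both $\alpha$ and $\beta$ are nonzero and $\tilde\phi_{1}(\mathbf p_{1}) = \tilde\phi_{2}(\mathbf p_{1}) = 1$. The analogous analysis of $\Psi(\varpi(\mathbf p_{1}))e_{2} = e_{2}$, using that $U^{*}e_{2}$ is a unit vector orthogonal to $U^{*}e_{1}$ and therefore also has both coordinates nonzero, gives $\tilde\phi_{1}(\varpi(\mathbf p_{1})) = \tilde\phi_{2}(\varpi(\mathbf p_{1})) = 1$.

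Thus $\tilde\phi_{1}$ takes the value $1$ at the three points $p_{0}^{-}\in B_{0}$, $\mathbf p_{1}\in B_{1}$ and $\varpi(\mathbf p_{1})\in B_{1}$. The construction in Theorem \vref{thm:OffX} identifies $\mathbf p_{1}$ with the point $p_{1}\in B_{1}\setminus\mathbb X$ chosen at the start of that proof, so $\mathbf p_{1}\neq\varpi(\mathbf p_{1})$ and $\tilde\phi_{1}^{-1}\{1\}$ contains two distinct points on $B_{1}$, contradicting the last sentence of the first paragraph. The principal obstacle is the degenerate possibility that $U$ is diagonal or antidiagonal (where only one of $\tilde\phi_{1}(\mathbf p_{1}) = 1$, $\tilde\phi_{2}(\mathbf p_{1}) = 1$ would be forced); this is exactly where the non-triviality $S_{\mathbf M}\neq S_{0}$ is used, by producing the off-diagonal component of $\Psi_{S_{\mathbf M},\mathbf p}$ that rules those possibilities out.
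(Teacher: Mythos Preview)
Your argument is correct and reaches the same contradiction as the paper (an inner function with $n+1$ zeroes taking the value $1$ twice on $B_{1}$), but the logic runs in the opposite direction. The paper first rules out $D(\mathbf{p}_{1})$ being scalar, deduces that $Ue_{1},Ue_{2}$ must be standard basis vectors, hence $U$ is diagonal and $\Psi=D$; it then invokes the non-triviality $S_{\mathbf M}\neq S_{0}$ at a \emph{different} index $i$, using $\Psi(\mathbf p_{i})P^{i+}=P^{i+}$ with $P^{i+}\neq P^{1+}$ to force $D(\mathbf p_{i})=I$ and obtain the contradiction there. You instead use $S_{\mathbf M}\neq S_{0}$ up front, computing the off-diagonal entry of $H_{S_{\mathbf M},\mathbf p}$ to see that $\Psi$ is not literally diagonal, which blocks the degenerate cases $\alpha=0$ or $\beta=0$ and forces $D(\mathbf p_{1})=D(\varpi(\mathbf p_{1}))=I$ directly. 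Your route is slightly shorter in that it never needs the auxiliary index $\mathbf p_{i}$; the price is the side computation on $H_{S,\mathbf p}$ together with the (easy but unstated) implications ``$H$ not diagonal $\Rightarrow G$ not diagonal $\Rightarrow \Psi$ not diagonal''.

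Two small points worth tightening. First, your claim that the non-trivial $P^{j+}$ must have a non-zero off-diagonal entry uses $\lVert P^{j+}_{\mathbf M}-P^{1+}\rVert<1$; this is not part of the stated choice of $\mathbf M$, but since $S_{m}\to S_{0}$ you may simply enlarge $\mathbf M$. Second, your identification $\mathbf p_{1}\in B_{1}\setminus\mathbb X$ is correct but deserves a word: in the proof of Theorem~\ref{thm:OffX} the M\"obius map $m$ is a bijection of the circle, so the $1$-set of $\overline{m\circ\varphi_{p}(p_{0}^{-})}\,m\circ\varphi_{p}$ coincides with the $1$-set of $\varphi_{p}$, namely $\{p_{0}^{-},p_{1},\ldots,p_{n}\}$, whence $\mathbf p_{1}=p_{1}\notin\mathbb X$ and $\mathbf p_{1}\neq\varpi(\mathbf p_{1})$ as you need.
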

\begin{proof}
Suppose, towards an eventual contradiction, that there is a diagonal
function $D$ and fixed unitaries $U$ and $V$ such that $D(z)=U\Psi(z)V^{*}$.
$D$ must be unitary valued on $B$, so must be unitary valued at
$p_{0}^{-}$, so by multiplying on the left by $D(p_{0}^{-})^{*}$,
we may assume that $D(p_{0}^{-})=I$. Since $\Psi(p_{0}^{-})=I$,
$U=V$.

Let \[
D=\left(\begin{array}{cc}
\phi_{1} & 0\\
0 & \phi_{2}\end{array}\right)\,.\]
 Since $D$ is unitary on $B$, both $\phi_{1}$ and $\phi_{2}$ are
unimodular on $B$. Further, as $\det\Psi$ has $2n+2$ zeroes (up
to multiplicity), and a non-constant scalar inner function has at
least $n+1$ zeroes, we conclude that either $\phi_{1}$ and $\phi_{2}$
have $n+1$ zeroes each, and take each value in the unit disc $\mathbb{D}$
at least $n+1$ times, or one has $2n+2$ zeroes, and the other is
a unimodular constant $\lambda$. The latter cannot occur, since \[
0=\Psi(b)=U^{*}\left(\begin{array}{cc}
\lambda & \cdot\\
\cdot & \cdot\end{array}\right)U\neq0\,,\]
 which would be a contradiction.

Now, from Lemma \vref{lem:PsiSp}, $\Psi(\mathbf{p}_{1})e_{1}=e_{1}$,
so $Ue_{1}$ is an eigenvector of $D(\mathbf{p}_{1})$, corresponding
to the eigenvalue $1$, so at least one of the $\phi_{j}(\mathbf{p}_{1})$s
is equal to $1$. Similarly, $Ue_{2}$ is an eigenvector of $D(\varpi(\mathbf{p}_{1}))$,
so at least one of the $\phi_{j}(\varpi(\mathbf{p}_{1}))$s is equal
to $1$. Now, $D(\mathbf{p}_{1})$ cannot be a multiple of the identity,
as this would mean that one of the $\phi_{j}$s was equal to $1$
at $\mathbf{p}_{1}$ and $\varpi(\mathbf{p}_{1})$, which is impossible%
\footnote{ as this would mean it took the value $1$ at least once on $B_{0},\, B_{2},\,\ldots,\, B_{n}$,
and at least twice on $B_{1}$, so at least $n+2$ times.%
}. Therefore, we can assume without loss of generality that \[
D(\mathbf{p}_{1})=\left(\begin{array}{cc}
1 & 0\\
0 & \lambda\end{array}\right)\,,\qquad D(\varpi(\mathbf{p}_{1}))=\left(\begin{array}{cc}
\lambda^{\prime} & 0\\
0 & 1\end{array}\right)\,,\]
 where $\lambda,\,\lambda^{\prime}$ are unimodular constants. We
can see from this that the eigenvectors corresponding to $1$ in these
matrices are $e_{1}$ and $e_{2}$, so $Ue_{1}=ue_{1}$, $Ue_{2}=u^{\prime}e_{2}$
for unimodular constants $u,\, u^{\prime}$. Since $D$ is diagonal,
we can assume that $u=u^{\prime}=1$, so $U=I$, and $\Psi=D$.

Now, since $S_{\mathbf{M}}\neq S_{0}$, there exists some $i$ such
that $P^{i+}\neq P^{1+}$, so these two projections must have different
ranges. However by Lemma \ref{lem:PsiSp}, \begin{align*}
P^{i+}= & \Psi(\mathbf{p}_{i})\, P^{i+}\\
= & D(\mathbf{p}_{i})\, P^{i+}\\
= & \left(\begin{array}{cc}
\phi_{1}(\mathbf{p}_{i}) & 0\\
0 & \phi_{2}(\mathbf{p}_{i})\end{array}\right)P^{i+}\,.\end{align*}
 This is only possible if $\Psi(\mathbf{p}_{i})=I$, but this is impossible,
as before. This is our contradiction. Therefore, $\Psi$ is not diagonalisable. 
\end{proof}
This concludes the proof of Theorem \ref{thm:ElTheoremGrande}, and
this paper.

\bibliographystyle{amsalpha} \bibliographystyle{amsalpha}
\bibliography{library}

\end{document}